\newtheorem{thm}{Theorem}[section]
\newtheorem{lem}[thm]{Lemma}
\newtheorem{prop}[thm]{Proposition}
\newtheorem{cor}[thm]{Corollary}
\theoremstyle{definition}
\newtheorem{defn}[thm]{Definition}
\newtheorem{rem}{Remark}
\newtheorem{ex}{Example}
\DeclareMathOperator{\id}{id}
\newcommand{\bd}[1]{\mathbf{#1}}  
\newcommand\QQ{{\mathbb Q}}
\newcommand{\ZZ}{\mathbb{Z}}      
\newcommand\NN{{\mathbb N}}
\newcommand\FF{{\mathbb F}}
\newcommand\GFq{{\bd{G}(\FF_q)}}
\newcommand{\F}{\mathbb{F}}
\newcommand{\G}{\mathbf{G}}
\newcommand{\sln}[1]{\operatorname{\mathbf{SL}}_{#1}}
\newcommand{\gln}[1]{\operatorname{\mathbf{GL}}_{#1}}
\newcommand{\spn}[1]{\operatorname{\mathbf{Sp}}_{#1}}
\newcommand{\pgln}[1]{\operatorname{\mathbf{PGL}}_{#1}}
\newcommand{\gal}{\operatorname{Gal}}
\newcommand{\im}{\operatorname{im}}
\newcommand{\tensor}{\otimes}
\newcommand{\inv}{^{-1}}
\begin{document}

\nocite{*}

\title{Generic Extensions and Generic Polynomials for Linear Algebraic Groups}

\author{Eric Chen}
\address[Eric Chen]{University of California, Berkeley}
\email{a5584266@berkeley.edu}

\author{J.T. Ferrara}
\address[J.T. Ferrara]{Bucknell University}
\email{jtf019@bucknell.edu}

\author{Liam Mazurowski}
\address[Liam Mazurowski]{Carnegie Mellon University}
\email{liammazurowski@gmail.com}

\thanks{This
research was conducted at the 2015 Louisiana State University Research
Experience for Undergraduates site supported by the National Science
Foundation REU Grant DMS 0648064.}

\begin{abstract}
We construct generic extensions and polynomials for certain linear algebraic groups over finite fields.  In particular, we show the existence of generic polynomials for the cyclic group $C_{2^m}$ in characteristic $p$ for all odd primes $p$.   
\end{abstract}

\maketitle

\section{Introduction}
\hspace{0.25in}A fundamental aspect of the Inverse Galois Problem is describing all extensions of a base field $K$ with a given Galois group $G$. A constructive approach to this problem involves the theory of generic polynomials. For a finite group group $G$, a polynomial $f(t_1,\ldots,t_n; Y) \in K(t_1,\ldots,t_n)[Y]$ is $G$-generic if $\gal(f/K(t_1,\ldots,t_n)) \cong G$ and if for any Galois $G$-extension $M/L$ with $L \supset K$, the parameters $t_1,\ldots,t_n$ can be specialized to $L$ such that $f$ has splitting field $M/L$. A related notion (due to Saltman \cite{Saltman}) is that of generic extensions, which similarly parametrize all Galois $G$-extensions containing $K$. The theory of Frobenius modules, developed by Matzat \cite{Matzat}, is of great practical use for these constructions in positive characteristic.

In our work, we use Frobenius modules to show the existence of and explicitly construct generic polynomials and extensions for various groups over fields of positive characteristic. The methods we develop apply to a broad class of connected linear algebraic groups satisfying certain conditions on cohomology. In particular, we use our techniques to study constructions for unipotent groups, certain algebraic tori and solvable groups, and some split semisimple groups.

An attractive consequence of our work is the construction of generic extensions and polynomials in the optimal number of parameters for all cyclic 2-groups over fields of odd positive characteristic. This constrasts with a theorem of Lenstra, which states that no cyclic 2-group of order $\geq 8$ has a generic polynomial over $\QQ$.

\section{Frobenius Modules}
\label{sect:F-Modules}

The theory of Frobenius modules is intimately related to the construction of generic polynomials. In this section we recall some basic results and definitions, most of which can be found in \cite{Matzat} and \cite{Morales-Sanchez}.

\subsection{Preliminaries}

Let $K \supset \FF_q$ be a field and let $\overline{K}$ be the algebraic closure of $K$.

\begin{defn} \label{def:Frobenius-Module} A \textit{Frobenius module} over $K$ is a pair $(M,\Phi)$ consisting of a finite dimensional vector space $M$ over $K$ and an $\FF_q$-linear map $\Phi: M \rightarrow M$ satisfying
\begin{enumerate}
\item $\Phi(ax) = a^q\Phi(x)$ for $a \in K$ and $x \in M$.
\item The natural extension of $\Phi$ to $\overline{K} \otimes_K M \rightarrow \overline{K} \otimes_K M$ is injective.
\end{enumerate}
\end{defn} 

\begin{defn} The \textit{solution space} $\mathrm{Sol}_\Phi(M)$ of $(M,\Phi)$ is the set of fixed points of $\Phi$, i.e.
$$\mathrm{Sol}_\Phi(M) = \{x \in \overline{K} \otimes_K M : \Phi(x) = x\}.$$
\end{defn}
\begin{rem}
The solution space is clearly an $\FF_q$-subspace of $M$.
\end{rem}

If we pick a $K$-basis $\{e_1, e_2, \dots, e_n\}$ for M, then $\Phi$ can be expressed as an element of $M_n(K)$. Specifically, write
$$\Phi(e_j) = \sum_{i=1}^n a_{ij}e_i$$
where $a_{ij} \in K$ and let $A = (a_{ij}) \in M_n(K)$. We can identify $M$ with $K^n$ with this choice of basis. Then for all $X \in K^n$, we have
$$\Phi(X) = AX^{(q)}$$
where $X = \begin{bmatrix} x_1 & \hdots & x_n\end{bmatrix}^T$ and $X^{(q) }= \begin{bmatrix} x_1^q & \hdots & x_n^q\end{bmatrix}^T$. Note that condition (2) of Definition~\ref{def:Frobenius-Module} ensures that A is nonsingular. From now on, we write $(K^n, \Phi_A)$ to denote the Frobenius module defined by the matrix $A \in \bd{GL}_n(K)$. 
\newline \newline
With this notation, we can write an equivalent definition for $\mathrm{Sol}_{\Phi_A}(M)$:
$$\mathrm{Sol}_{\Phi_A}(M) = \{ X \in \overline{K}^n : AX^{(q)} = X\}.$$

Now suppose $B$ is a matrix representing $\Phi$ in a different basis, with change of basis matrix $U \in \gln n (K)$. Then it can be verified that $B = U^{-1}AU^{(q)}$, where $U^{(q)}$ is element-wise $q^{\text{th}}$-power. This motivates the following definition. 

\begin{defn} Let $(K^n,\Phi_A)$ and $(K^n,\Phi_B)$ be Frobenius modules. We say they are \textit{Frobenius equivalent} over $K$ if there exists some matrix $U \in \bd{GL}_n(K)$ such that
$$B = U^{-1}AU^{(q)}.$$
\end{defn}

\begin{defn} The \textit{splitting field} $E$ of a Frobenius module $(K^n,\Phi_A)$ is the smallest subfield of $\overline{K}$ that contains all the solutions to the system of polynomial equations $AX^{(q)} = X$.
\end{defn}
\begin{rem}It can be verified that $E/K$ is a Galois extension \cite{Morales-Sanchez}. Additionally, Frobenius modules that are equivalent over $K$ define the same splitting field.
\end{rem}

\begin{defn} The \textit{Galois group of a Frobenius module}, is $\gal(K^n, \Phi_A) := \gal(E/K)$, where $E$ is the splitting field of the Frobenius module $(K^n, \Phi_A)$.
\end{defn}



\subsection{Determination of the Galois group}
\label{subsect:Gal-Grp}
We state here the Lang-Steinberg theorem (see \cite[Theorem
1]{Lang:1956ys} and \cite[Theorem 10.1]{Steinberg:1968aa}), which is central to the theory of Frobenius modules. 
\begin{thm}[Lang-Steinberg] \label{thm:L-S} Let $\G \subset \bd{GL}_n$ be a closed connected algebraic subgroup defined over $\FF_q$ and let $A \in \G(K)$. Then there exists $U \in \G(\overline{K})$ such that $U(U^{(q)})^{-1} = A$.
\end{thm}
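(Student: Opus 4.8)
\emph{Proof proposal.} This is essentially Lang's theorem in the form refined by Steinberg, and I would prove it by working directly over the algebraically closed field $\overline{K}$ (of characteristic $p$). Write $F$ for the map $g \mapsto g^{(q)}$ on matrix entries; because $\G$ is defined over $\FF_q$, its defining equations have coefficients fixed by the $q$-th power map, so $F$ restricts to an endomorphism of $\G$ regarded as a variety over $\overline{K}$. The desired identity $A = U(U^{(q)})\inv = UF(U)\inv$ is equivalent to $U\inv A\, F(U) = e$, so it is enough to prove that the $F$-twisted conjugacy class $\{\, g\inv A\, F(g) : g \in \G(\overline{K})\,\}$ contains the identity $e$.

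For each $B \in \G(\overline{K})$ introduce the morphism $L_B \colon \G \to \G$ given by $L_B(g) = g\inv B\, F(g)$, whose image is exactly the $F$-twisted conjugacy class of $B$. If $B' = h\inv B\, F(h)$, then $L_{B'}(g) = (hg)\inv B\, F(hg)$, so $\im L_{B'} = \im L_B$; consequently, if $\im L_B$ and $\im L_C$ share a point, then $C$ is $F$-twisted conjugate to $B$ and $\im L_C = \im L_B$. Thus any two of these images are either equal or disjoint.

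The crux is that every $L_B$ is dominant. Computing first-order data at $e$ over the dual numbers (with $X$ in the Lie algebra of $\G$), we have $F(e + \varepsilon X) = e + \varepsilon\, dF_e(X) + \cdots = e + \cdots$, since $dF_e = 0$: the $q$-th power map annihilates the cotangent space at $e$ because $q \geq 2$. Hence $L_B(e + \varepsilon X) = (e - \varepsilon X)\, B\, (e + \cdots) = B - \varepsilon\, XB$, so the differential of $L_B$ at $e$ is $-\id$ after the standard translation identification of $T_B\G$ with the Lie algebra; in particular it is an isomorphism, so $L_B$ is étale at $e$ and hence on an open neighbourhood of $e$. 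Since $\G$ is connected it is irreducible (being smooth over the perfect field $\overline{K}$), and since étale morphisms are open, $\im L_B$ contains a nonempty --- hence dense --- open subset of $\G$.

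Finally, apply this to $B = e$ and to $B = A$: the sets $\im L_e$ and $\im L_A$ are dense open subsets of the irreducible variety $\G$, so they intersect, and therefore coincide. As $L_e(e) = e\inv\cdot e\cdot F(e) = e$, we conclude $e \in \im L_e = \im L_A$, i.e.\ there is $U \in \G(\overline{K})$ with $U\inv A\, F(U) = e$, which rearranges to $U(U^{(q)})\inv = A$, as required. The step that carries the whole argument is the dominance of $L_B$, and it rests entirely on $dF_e = 0$ --- the inseparability of Frobenius; the main bookkeeping subtlety is keeping the base field $\overline{K}$ (rather than $\overline{\FF_q}$) and the $q$-semilinear nature of $F$ straight, which is precisely what the hypothesis ``$\G$ defined over $\FF_q$'' is there to permit.
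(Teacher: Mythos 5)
The paper states Lang--Steinberg with citations to Lang (1956) and Steinberg (1968) and does not prove it, so there is no in-paper argument to compare against. Your proof is the classical Lang argument and is correct: the key step $dF_e = 0$ (whence each twisted-conjugation morphism $L_B$ is dominant) combined with the irreducibility of the connected group $\G$ forces the dense open images $\im L_e$ and $\im L_A$ to meet and hence coincide, putting $e$ in $\im L_A$. You also correctly flag the one nontrivial bookkeeping point in this generality: the argument runs over $\overline{K}$ for an arbitrary $K \supset \FF_q$, which works because $F\colon g \mapsto g^{(q)}$ is a $\overline{K}$-morphism of $\G_{\overline K}$ precisely because $\G$ is defined over $\FF_q$.
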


\begin{rem}In fact, by the results in \cite{Morales-Sanchez}
, the element $U$ given in Theorem~\ref{thm:L-S} lies in $\G(K_{sep})$. Moreover, this implies the splitting field of the Frobenius module defined by $A$ is $K(U) := K(u_{ij})$. 
\end{rem}

Next, we state two theorems due to Matzat \cite{Matzat} that plays an important role in determining the Galois group of a Frobenius module. 

\begin{thm}[Upper Bound Theorem] \label{thm:Upper-Bd} Let $\G \subset \bd{GL}_n$ be a closed connected algebraic subgroup defined over $\FF_q$ and let $A \in \G(K)$. Let $E/K$ be the splitting field of the Frobenius module $(K^n, \Phi_A)$ defined by $A$ and let $U \in \G(E)$ be an element given by the Lang-Steinberg Theorem. Then the map
$$\gal(E/K) \stackrel{\rho}{\rightarrow} \G(\FF_q)$$
$$\sigma \mapsto U^{-1}\sigma(U)$$
is an injective group homomorphism.
\end{thm}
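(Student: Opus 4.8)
The plan is to show that $\sigma \mapsto U^{-1}\sigma(U)$ is well-defined with image in $\G(\FF_q)$, that it is a homomorphism, and that it is injective. First I would check well-definedness and the target. Since $U (U^{(q)})^{-1} = A \in \G(K)$, applying any $\sigma \in \gal(E/K)$ gives $\sigma(U)(\sigma(U)^{(q)})^{-1} = A$ as well, because $\sigma$ fixes $A$ and commutes with the entrywise $q$-th power map (as $\sigma$ is a field automorphism and the exponent $q$ is fixed). Dividing the two relations, set $C_\sigma := U^{-1}\sigma(U)$; then one computes $C_\sigma^{(q)} = (U^{(q)})^{-1}\sigma(U)^{(q)} = (U^{(q)})^{-1} A^{-1} \sigma(U)^{(q)}\cdot$(rearranged using $A^{-1} = (U^{(q)})^{-1}U^{-1}\cdot U U^{(q)}$, i.e. directly from $(U^{(q)})^{-1} = A^{-1}U$ and $\sigma(U)^{(q)} = A^{-1}\sigma(U)$) which collapses to $C_\sigma^{(q)} = C_\sigma$. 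An element of $\G(E)$ fixed entrywise by the $q$-power Frobenius lies in $\G(\FF_q)$, so $\rho$ indeed maps into $\G(\FF_q)$. It is worth noting that $\rho(\sigma) \in \G(\overline K)$ automatically since $U \in \G(E)$ and $\G$ is a group; the content is that it is $\FF_q$-rational.

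Next I would verify the homomorphism property. For $\sigma, \tau \in \gal(E/K)$,
$$\rho(\sigma\tau) = U^{-1}\sigma(\tau(U)) = U^{-1}\sigma(U)\cdot \sigma(U)^{-1}\sigma(\tau(U)) = \rho(\sigma)\cdot \sigma\big(U^{-1}\tau(U)\big) = \rho(\sigma)\,\sigma(\rho(\tau)).$$
Since $\rho(\tau) = C_\tau \in \G(\FF_q)$ has all entries in $\FF_q \subset K$, it is fixed by $\sigma$, so $\sigma(\rho(\tau)) = \rho(\tau)$ and $\rho(\sigma\tau) = \rho(\sigma)\rho(\tau)$. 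This is the step where the rationality established in the previous paragraph is essential: without knowing $\rho(\tau)$ is $\gal(E/K)$-fixed, $\rho$ would only be a cocycle, not a homomorphism.

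Finally, injectivity. If $\rho(\sigma) = \id$, then $U^{-1}\sigma(U) = \id$, i.e. $\sigma(U) = U$, so $\sigma$ fixes every entry $u_{ij}$ of $U$. By the remark following the Lang-Steinberg theorem, $E = K(U) = K(u_{ij})$ is generated over $K$ by exactly these entries, so $\sigma$ fixes all of $E$, i.e. $\sigma = \id$ in $\gal(E/K)$. Hence $\ker\rho$ is trivial and $\rho$ is injective, giving $|\gal(E/K)| \le |\G(\FF_q)|$, which is the "upper bound" of the title.

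The main obstacle — really the only nontrivial point — is the first paragraph: proving that $C_\sigma := U^{-1}\sigma(U)$ is genuinely $\FF_q$-rational, i.e. that $C_\sigma^{(q)} = C_\sigma$. The manipulations are elementary but one must be careful that the entrywise $q$-power map $X \mapsto X^{(q)}$ is multiplicative on matrices only because $q$ is a power of the characteristic (so it is a ring homomorphism on $\overline K$, hence on $M_n(\overline K)$), and that it commutes with $\sigma$ because $\sigma$ is an $\FF_q$-algebra automorphism of $E$. Granting these, the identity $C_\sigma^{(q)} = C_\sigma$ follows by substituting $U^{(q)} = A^{-1}U$ and $\sigma(U)^{(q)} = \sigma(U^{(q)}) = \sigma(A^{-1}U) = A^{-1}\sigma(U)$ into $C_\sigma^{(q)} = (U^{(q)})^{-1}\sigma(U)^{(q)}$ and cancelling $A^{-1}$.
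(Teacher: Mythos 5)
The paper states this result as a theorem of Matzat and cites it without giving a proof, so there is no in-paper argument to compare against. Your proof is the standard one and it is correct: you establish that $C_\sigma := U^{-1}\sigma(U)$ lands in $\G(\FF_q)$ by computing $C_\sigma^{(q)} = C_\sigma$ from the relations $U^{(q)} = A^{-1}U$ and $\sigma(U)^{(q)} = A^{-1}\sigma(U)$; you then upgrade the cocycle identity $\rho(\sigma\tau) = \rho(\sigma)\,\sigma(\rho(\tau))$ to a genuine homomorphism because $\rho(\tau)$ has entries in $\FF_q$ and is hence $\sigma$-fixed; and injectivity is immediate since $E = K(u_{ij})$. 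One writing issue worth fixing: the parenthetical in your first paragraph garbles the algebra --- it asserts $A^{-1} = (U^{(q)})^{-1}U^{-1}\cdot U U^{(q)}$, which simplifies to $A^{-1} = I$, and also misquotes $U^{(q)} = A^{-1}U$ as $(U^{(q)})^{-1} = A^{-1}U$ --- but your final paragraph redoes this computation correctly, so the mathematics is sound. You might also explicitly observe that $\sigma(U) \in \G(E)$ because $\G$ is defined over $\FF_q \subset K$ and $\sigma$ fixes $K$, which is what guarantees $C_\sigma \in \G(E)$ rather than merely $\gln{n}(E)$ before you invoke Frobenius-fixedness.
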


\begin{thm}[Lower Bound Theorem] \label{thm:Lower-Bd} Let $K = \FF_q(\bd{t})$ where $\bd{t} = (t_1, \dots, t_m)$ are indeterminates. Let $\G \subset \bd{GL}_n$ be a closed connected algebraic subgroup defined over $\FF_q$ and let $A \in \G(K)$. Let $\rho: \gal(E/K) \rightarrow \G(\overline{\FF_q})$ be the homomorphism in Theorem 2.6. Then every specialization of $A$ in $\FF_q$ is conjugate in $\G(\overline{\FF_q})$ to an element of $\im(\rho)$.
\end{thm}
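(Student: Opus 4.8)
The plan is to realize any $\FF_q$-specialization of $A$ as the reduction of the ``generic'' matrix $A$ modulo a prime lying over the corresponding maximal ideal, and then to match the Frobenius of the reduced situation with a Frobenius element of $\gal(E/K)$. More precisely: a specialization of $A$ in $\FF_q$ is a point $\bd a=(a_1,\dots,a_m)\in\FF_q^m$ at which every entry of $A$ is defined and at which $\det A$ does not vanish; writing $\mathcal O=\FF_q[\bd t]_{\mathfrak m}$ for the localization of $\FF_q[\bd t]$ at $\mathfrak m=(t_1-a_1,\dots,t_m-a_m)$, this is equivalent to $A\in\G(\mathcal O)$ (as $\G$ is closed in $\gln n$ and $\mathcal O$ is a domain with fraction field $K$). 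Since $\mathcal O$ has residue field $\FF_q$, reduction modulo $\mathfrak m$ sends $A$ to the specialized matrix $\overline A:=A(\bd a)\in\G(\FF_q)$, and the goal is to produce $\sigma\in\gal(E/K)$ and $P\in\G(\overline{\FF_q})$ with $P\,\overline A\,P^{-1}=\rho(\sigma)$.

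\emph{Integrality of the Lang--Steinberg element.} Fix $U\in\G(E)$ with $U(U^{(q)})^{-1}=A$ as in Theorem~\ref{thm:L-S}, so that $E=K(U)$ and $\rho(\sigma)=U^{-1}\sigma(U)$. Rewrite the defining relation as $U^{(q)}=A^{-1}U$ with $A^{-1}\in\gln n(\mathcal O)$; then each entry $u_{ij}$ satisfies the monic relation $u_{ij}^{\,q}=\sum_k (A^{-1})_{ik}u_{kj}$ over the ring $\mathcal O[u_{kl}:1\le k,l\le n]$, so this ring is spanned over $\mathcal O$ by the finitely many monomials in the $u_{ij}$ of degree $<q$ in each variable, hence is module-finite over $\mathcal O$. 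Similarly $(\det U)^{q-1}=(\det A)^{-1}\in\mathcal O^\times$, so $(\det U)^{-1}$ is integral over $\mathcal O$. Therefore $U$ and $U^{-1}$ have entries in the integral closure $B$ of $\mathcal O$ in $E$, i.e. $U\in\G(B)$; here $B$ is module-finite over $\mathcal O$ (as $\mathcal O$ is a localization of a finitely generated $\FF_q$-algebra) and is a $\gal(E/K)$-stable normal domain.

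\emph{Reduction modulo a prime above $\mathfrak m$.} By the classical theory of decomposition groups for the integral extension $B/\mathcal O$ of normal domains with $E/K$ finite Galois, $\gal(E/K)$ permutes the primes of $B$ over $\mathfrak m$ transitively; fix one such prime $\mathfrak P$, put $\ell=B/\mathfrak P$ (a finite extension of $\FF_q$), and recall that the decomposition group $D(\mathfrak P)\subseteq\gal(E/K)$ surjects onto $\gal(\ell/\FF_q)$. Choose $\sigma\in D(\mathfrak P)$ mapping to the $q$-power Frobenius $\phi_\ell$ of $\ell/\FF_q$. Reducing modulo $\mathfrak P$: from $U^{(q)}=A^{-1}U$ we get $\overline U^{(q)}=\overline A^{-1}\,\overline U$ in $\G(\ell)$, where $\overline U:=U\bmod\mathfrak P$ and $\overline A:=A\bmod\mathfrak P=A(\bd a)$; and since $\rho(\sigma)=U^{-1}\sigma(U)$ already has entries in $\FF_q$ and $\sigma\in D(\mathfrak P)$ reduces to $\phi_\ell$,
\[
\rho(\sigma)=\overline U^{-1}\,\overline{\sigma(U)}=\overline U^{-1}\phi_\ell(\overline U)=\overline U^{-1}\,\overline U^{(q)}=\overline U^{-1}\,\overline A^{-1}\,\overline U.
\]
Hence $\overline A=\overline U\,\rho(\sigma)^{-1}\,\overline U^{-1}=\overline U\,\rho(\sigma^{-1})\,\overline U^{-1}$, so $A(\bd a)$ is conjugate in $\G(\ell)\subseteq\G(\overline{\FF_q})$ to $\rho(\sigma^{-1})\in\im(\rho)$, which is exactly the assertion.

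\emph{Main obstacle.} The one step carrying real content is the integrality of the Lang--Steinberg element $U$ over the local ring $\mathcal O$ at the specialization point --- specifically the observation that $U^{(q)}=A^{-1}U$ already forces $\mathcal O[u_{ij}]$ to be finite over $\mathcal O$; after that the argument is just classical ramification theory, and it is essential that $\FF_q$ is finite, so that $\gal(\ell/\FF_q)$ is cyclic and the single Frobenius $\phi_\ell$ can be lifted to $\gal(E/K)$. One can sharpen the picture by noting that the solution scheme $\{X\in\G:X=AX^{(q)}\}$ is a $\G(\FF_q)$-torsor which is finite \'etale over $\mathcal O$ (the Lang isogeny is \'etale), so that $E/K$ is in fact unramified at $\mathfrak m$ and $D(\mathfrak P)\xrightarrow{\ \sim\ }\gal(\ell/\FF_q)$; this refinement is not needed here.
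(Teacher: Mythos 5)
The paper does not actually prove this theorem: it attributes the Lower Bound Theorem to Matzat and invokes the version stated in Albert--Maier, so there is no in-text proof to compare your argument against. Judged on its own, your argument is correct and complete, and it is essentially the standard proof of this result.

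The two real ideas are exactly the right ones. First, the integrality step: rewriting the Lang--Steinberg relation as $U^{(q)}=A^{-1}U$ gives, for each entry, $u_{ij}^{q}=\sum_{k}(A^{-1})_{ik}u_{kj}$, which (together with $(\det U)^{q-1}=(\det A)^{-1}\in\mathcal O^{\times}$) shows that $\mathcal O[u_{ij},(\det U)^{-1}]$ is spanned over $\mathcal O$ by monomials of degree $<q$ in each variable, hence is module-finite over $\mathcal O$, hence $U\in\G(B)$ with $B$ the integral closure of $\mathcal O$ in $E$ and $\det U\in B^{\times}$. Second, the ramification step: $\mathcal O$ is regular, hence normal, so the decomposition group $D(\mathfrak P)$ of a prime $\mathfrak P\mid\mathfrak m$ surjects onto $\gal(\ell/\FF_q)$ (Bourbaki's decomposition theorem for integral extensions of normal domains), and since $\FF_q$ is finite this latter group is generated by the single Frobenius $\phi_{\ell}$, which you lift to $\sigma\in D(\mathfrak P)$. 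Reducing $\rho(\sigma)=U^{-1}\sigma(U)$ modulo $\mathfrak P$ and using $\overline{\sigma(U)}=\phi_{\ell}(\overline U)=\overline U^{(q)}=\overline A^{-1}\overline U$ gives $\rho(\sigma)=\overline U^{-1}\overline A^{-1}\overline U$, so $\overline A=\overline U\,\rho(\sigma^{-1})\,\overline U^{-1}$ is $\G(\ell)$-conjugate (and a fortiori $\G(\overline{\FF_q})$-conjugate) to an element of $\im(\rho)$, as required. Your closing observation that the Lang isogeny is \'etale --- so that $E/K$ is unramified at $\mathfrak m$ and $D(\mathfrak P)\xrightarrow{\sim}\gal(\ell/\FF_q)$ --- is a correct and clean refinement, though not needed for the statement.
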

We use the special case of the Lower Bound Theorem as stated in the Albert-Maier paper \cite[Theorem
3.4]{Albert:2011vn}. 

\section{Generic polynomials and generic extensions}
\label{sect:Gen-Ext}
First we recall the following definitions, both of which can be found in \cite{Jensen-Ledet-Yui}.
\begin{defn} Let $K$ be a field and $G$ a finite group. A Galois ring extension $S/R$ with group $G$ is called a \textit{generic }$G$\textit{-extension} over $K$, if
\begin{enumerate}
\item $R$ is a localized polynomial ring, i.e.\ $R = K[t_1, \ldots, t_n, \frac{1}{g}]$ where $g$ is a nonzero polynomial, and
\item whenever $L$ is an extension field of $K$ and $M/L$ a Galois $G$-algebra, there is a homomorphism of $K$-algebras $\varphi: R \rightarrow L$, such that $S\otimes_\varphi L$ and $M$ are isomorphic as $G$-algebras over $L$.
\end{enumerate}  
\end{defn}

\begin{defn}A monic separable polynomial $f(Y;\bd{t}) \in K(\bd{t})[Y]$ is called $G$\textit{-generic} over $K$ if the following conditions are satisfied:
\begin{enumerate}
\item $\gal(f(Y;\bd{t})/K(\bd{t})) \cong G$
\item Every Galois $G$-extension $M/L$, where $L$ is a field containing $K$, is the splitting field of a specialization $f(Y;\bd{\xi})$ for some $\bd{\xi} \in L^m$
\end{enumerate}
\end{defn}

Under certain hypotheses it is possible to construct a generic polynomial from a Frobenius module, as detailed in \cite{Morales-Sanchez}. We outline these important results in the following theorem.

\begin{thm}
\label{thm:M-S}
Let $\bd{G} \subset \bd{GL}_n$ be a closed connected algebraic group defined over $\FF_q$. If the following conditions are satisfied
\begin{enumerate} 
\item $H^1(\mathbf{G}, K) = \{1\}$, for all $K \supset \FF_q$
\item Conjugacy in $\GFq$ is equivalent to conjugacy in $\bd{G}(\overline{\FF_q})$; i.e., if $a,b\in \GFq$, and there exists $u \in \bd{G}(\overline{\FF_q})$ such that $uau^{-1} = b$, then there exists $w \in \GFq$ such that $waw^{-1} = b$
\item There exists an algebraic map $A: \mathbb{A}^d \rightarrow \bd{G}$ defined over $\FF_q$ with
\begin{enumerate}
\item $\{A(\xi) : \xi \in \FF_q^d\} \subset \GFq$ is a set of strong generators (for the definition of strong generators, see \cite{Albert-Maier} Section 3.4)
\item if $B \in \bd{GL}_n(L)$, and $(L^n, \Phi_B)$ has Galois group $\GFq$, then $B$ is Frobenius equivalent to $A(\xi)$, for some $\xi \in L^d$
\end{enumerate}
\end{enumerate}
then $\G(\FF_q)$ admits a generic polynomial in $d$ parameters over $\FF_q$. 
\end{thm}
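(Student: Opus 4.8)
The plan is to construct a single ``generic'' Frobenius module over $\FF_q(\bd{t})$ from the map $A$, identify its Galois group with $\G(\FF_q)$ via the Upper and Lower Bound Theorems, show that every Galois $\G(\FF_q)$-extension of a field $L \supset \FF_q$ is the splitting field of a specialization of that module, and finally convert the module into a single polynomial in a way that is compatible with specialization. To begin, let $\bd{t} = (t_1,\dots,t_d)$ be indeterminates and put $K_0 = \FF_q(\bd{t})$. Since $A\colon \mathbb{A}^d \to \G \subset \gln{n}$ is a morphism over $\FF_q$, the matrix $A(\bd{t})$ is an element of $\G(K_0)$, so we may form the Frobenius module $(K_0^n, \Phi_{A(\bd{t})})$ with splitting field $E_0/K_0$. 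By the Lang--Steinberg Theorem (Theorem~\ref{thm:L-S}), pick $U_0 \in \G(E_0)$ with $U_0(U_0^{(q)})^{-1} = A(\bd{t})$, so that $E_0 = K_0(U_0)$.

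For the Galois group: the Upper Bound Theorem (Theorem~\ref{thm:Upper-Bd}) gives an injective homomorphism $\rho\colon \gal(E_0/K_0) \hookrightarrow \G(\FF_q)$, $\sigma \mapsto U_0^{-1}\sigma(U_0)$. To see that it is onto, take any $\xi \in \FF_q^d$; by the Lower Bound Theorem (Theorem~\ref{thm:Lower-Bd}, in the Albert--Maier form) the specialization $A(\xi)$ is conjugate in $\G(\overline{\FF_q})$ to an element of $\im\rho$, hence by hypothesis~(2) conjugate already in $\G(\FF_q)$ to an element of $\im\rho$. Since by hypothesis~(3a) the set $\{A(\xi) : \xi \in \FF_q^d\}$ consists of strong generators of $\G(\FF_q)$, the defining property of strong generators (\cite[Section~3.4]{Albert-Maier}) forces $\im\rho = \G(\FF_q)$; thus $\rho$ is an isomorphism $\gal(E_0/K_0) \cong \G(\FF_q)$.

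Next, genericity. Let $M/L$ be any Galois extension with $L \supset \FF_q$ and $\gal(M/L) \cong \G(\FF_q)$. Here hypothesis~(1) enters: by the theory developed in \cite{Morales-Sanchez}, the vanishing of $H^1(\G,L)$ ensures that $M$ is the splitting field of a Frobenius module $(L^n,\Phi_B)$ with $B \in \G(L)$ whose Galois group is $\G(\FF_q)$. Hypothesis~(3b) then supplies $\xi \in L^d$ with $B$ Frobenius equivalent over $L$ to $A(\xi)$; as Frobenius equivalent modules share a splitting field, $M$ is the splitting field of $(L^n, \Phi_{A(\xi)})$, i.e.\ of the specialization $\bd{t} \mapsto \xi$ of the generic module $(K_0^n, \Phi_{A(\bd{t})})$.

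It remains to replace modules by a polynomial. Following the module-to-polynomial construction of \cite{Morales-Sanchez}, attach to $(K_0^n,\Phi_{A(\bd{t})})$ a monic separable $f(Y;\bd{t}) \in K_0[Y]$ whose splitting field over $K_0$ is $E_0$, built uniformly out of the entries of $A(\bd{t})$ — for instance from the values of a suitable $\FF_q$-linear functional on the $n$-dimensional $\FF_q$-vector space $\mathrm{Sol}_{\Phi_{A(\bd{t})}}(K_0^n)$, which satisfy an additive polynomial with coefficients rational in $\bd{t}$. Because the construction is uniform, specializing $\bd{t}\mapsto\xi$ in $f$ yields a polynomial whose splitting field over $L$ is exactly that of $(L^n,\Phi_{A(\xi)})$. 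Combining this with the two preceding paragraphs, $f$ has Galois group $\G(\FF_q)$ over $\FF_q(\bd{t})$ and every Galois $\G(\FF_q)$-extension of every $L \supset \FF_q$ is the splitting field of a specialization of $f$; since $f$ involves $d$ parameters, it is a generic polynomial for $\G(\FF_q)$ over $\FF_q$. The step I expect to be most delicate is the genericity one: it requires that \emph{every} Galois $\G(\FF_q)$-extension of $L$ — not just those that already arise from a Frobenius module — be realized by a module with structure group $\G$ and full Galois group, which is precisely where the cohomological hypothesis $H^1(\G,L) = \{1\}$ is used via \cite{Morales-Sanchez}; a further technical nuisance is checking that the module-to-polynomial passage is canonical enough to commute with all specializations, in particular without passing to an extension of $\FF_q$.
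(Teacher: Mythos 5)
Your proposal is correct and follows essentially the same route as the paper: build the Frobenius module $(K_0^n,\Phi_{A(\bd{t})})$, combine the Upper Bound Theorem with the Lower Bound Theorem plus hypotheses (2) and (3a) to identify the Galois group with $\G(\FF_q)$, use $H^1$-triviality together with (3b) for genericity, and pass from the module to an additive polynomial. The only slight difference is in the final step, where the paper invokes the cyclic basis of the Frobenius module (giving a companion matrix whose last column produces the coefficients of the additive polynomial $Y^{q^n}-\sum a_i Y^{q^i}$), whereas you describe the same construction more loosely via a linear functional on the solution space; making this step canonical and compatible with all specializations, which you rightly flag as the delicate point, is exactly what the cyclic-basis argument and the cited Corollaries 4.2--4.3 of Morales--S\'anchez supply.
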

\begin{proof} We sketch the proof of this theorem.
Let $\bd{t} = (t_1, \dots, t_d)$ be indeterminates, then it follows from Theorems~\ref{thm:L-S} and \ref{thm:Upper-Bd} that we get an injective representation

$$R: \gal(A(\bd{t})/\FF_q(\bd{t})) \hookrightarrow \GFq$$
In fact, using conditions (2), (3a), and Theorem \ref{thm:Lower-Bd}, one can show that $R$ is an isomorphism.

Now suppose $M/L$ is a Galois extension with group $\GFq$. Choose an isomorphism $\rho: \gal(M/L) \to \GFq \subset \bd{G}(M)$. By condition (1), there exists some $U \in \bd{G}(M)$ such that the cocycle $\rho: \gal(M/L) \to \bd{G}(M)$ is given by $\rho(\sigma) = U^{-1}\sigma(U)$, for all $\sigma \in \gal(M/L)$. If we let $B = U(U^{(q)})^{-1}$, then $B$ is in $\bd{G}(L)$, and $(L^n, \Phi_B)$ has splitting field $M$. By condition (3b), $B$ is Frobenius equivalent to $A(\xi)$, for some $\xi \in L^d$. 
\par Since $A(\bd{t})$ defines a Frobenius module over an infinite field, this module admits a cyclic basis \cite{Matzat}.
The generic polynomial is then easily calculated to be of the form 
$$Y^{q^n} - a_0Y - a_1Y^q - \cdots - a_{n-1}Y^{q^{n-1}}$$
where the $\{a_i\}$ are elements in the last column of the matrix representing the module with respect to this cyclic basis \cite[Theorem 2.1]{Morales-Sanchez}. For an explicit construction of the coefficients $a_i$, see Corollary 4.2, 4.3 of \cite{Morales-Sanchez}.  
\end{proof}

\section{Generic extensions for $\GFq$ over $\FF_q$}
\label{sect:Thm}
We now state and prove one of the main results of this paper, which will be used extensively in Sections~\ref{sect:app} and \ref{sect:cyclic} to derive other interesting consequences.

\begin{thm}
\label{thm:Gen-Ext}
Let $\G$ be a closed connected algebraic group defined over $\F_q$. Define the Lang-Steinberg map $\lambda: \G \to \G$ by $X \mapsto X(X^{(q)})^{-1}$. Let $S = \FF_q[\G]$ and $R = \lambda^\ast\FF_q[\G]$, where $\lambda^\ast$ is the induced ring homomorphism. Then we have the following: 
\begin{enumerate}
\item $S/R$ is a Galois extension of rings with group $\GFq$.
\item If $M/L$ is a Galois $\GFq$-algebra with $L \supset \FF_q$ such that $H^1(\gal(M/L), \G(M)) = \{1\}$, then there exists an $\FF_q$-algebra homomorphism $\varphi: R \to L$ such that $L \tensor_\varphi S \cong M$ as $\GFq$-algebras over $L$.

\end{enumerate}
\end{thm}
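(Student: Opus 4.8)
The plan is to prove (1) by recognizing $\lambda$ as the morphism that exhibits the target $\G$ as the quotient of the source $\G$ by the free right-translation action of the finite group $G := \GFq$ --- the geometric form of Lang's theorem --- and to prove (2) by converting a cocycle describing the $\GFq$-Galois algebra $M/L$ into a point $b \in \G(L)$ and then evaluating at a solution of $\lambda(U) = b$.

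For (1), I would first record the orbit computation: for $X, Y \in \G(\overline{\FF_q})$, since the relative Frobenius $F\colon X \mapsto X^{(q)}$ is a homomorphism, $\lambda(X) = \lambda(Y)$ holds iff $Y^{-1}X = (Y^{-1}X)^{(q)}$, i.e.\ iff $Y^{-1}X \in \G(\FF_q) = G$; thus $\lambda$ is constant on and separates the right cosets $XG$, and $G$ acts on $S = \FF_q[\G]$ by $\FF_q$-algebra automorphisms fixing $R = \lambda^\ast \FF_q[\G]$ pointwise. Next, a short differential computation --- $F$ has vanishing differential, so $d\lambda_e = \mathrm{id}$, and $\lambda(X_0 Y) = X_0\,\lambda(Y)\,F(X_0)^{-1}$ spreads this to every point --- shows $\lambda$ is \'etale; combined with Lang-Steinberg surjectivity and the coset description of the fibers, this identifies $\lambda$ with the finite \'etale quotient map for the free $G$-action, so in particular $R \hookrightarrow S$ is finite and faithfully flat. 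Finally I would exhibit the shearing isomorphism $\G \times G \xrightarrow{\sim} \G \times_\G \G$ (fiber product over $\lambda$), $(X, g) \mapsto (X, Xg)$, which on coordinate rings is exactly the Chase-Harrison-Rosenberg comparison map $S \tensor_R S \to \prod_{g \in G} S$, $a \tensor b \mapsto (a \cdot g(b))_g$; faithfully flat descent then forces $R = S^G$, and the displayed isomorphism is precisely the criterion for $S/R$ to be a Galois extension of rings with group $G$.

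For (2), observe that since $\lambda$ is dominant, $\lambda^\ast$ is injective, so $R \cong \FF_q[\G]$ and an $\FF_q$-algebra homomorphism $\varphi\colon R \to L$ amounts to a point $b \in \G(L)$; for such $\varphi$, $L \tensor_\varphi S$ is the coordinate ring of the fiber $\lambda^{-1}(b)$ regarded over $L$, which by (1) is a $\GFq$-Galois $L$-algebra. To find the right $b$, use the $\GFq$-Galois structure of $M/L$ to fix the isomorphism $\rho\colon \gal(M/L) \xrightarrow{\sim} G \subset \G(M)$; since $\rho$ takes values in $\G(\FF_q)$, on which $\gal(M/L)$ acts trivially, $\rho$ is a $1$-cocycle (the cocycle identity reduces to $\rho$ being a homomorphism), so the hypothesis $H^1(\gal(M/L), \G(M)) = \{1\}$ yields $U \in \G(M)$ with $\rho(\sigma) = U^{-1}\sigma(U)$, i.e.\ $\sigma(U) = U\rho(\sigma)$, for all $\sigma$. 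Now put $b := \lambda(U) = U(U^{(q)})^{-1}$: using $\sigma(U) = U\rho(\sigma)$, that $\rho(\sigma) \in \G(\FF_q)$ is fixed by $F$, and that the $\FF_q$-morphism $F$ commutes with the $\gal(M/L)$-action, one gets $\sigma(b) = U\rho(\sigma)\bigl(U^{(q)}\rho(\sigma)\bigr)^{-1} = U(U^{(q)})^{-1} = b$, so $b \in \G(L)$. Take $\varphi$ to be evaluation at this $b$. Then $U \in \G(M)$ is a solution of $\lambda(U) = b$, so evaluation at $U$ defines an $L$-algebra homomorphism $\psi_U\colon L \tensor_\varphi S \to M$, and the relation $\sigma(U) = U\rho(\sigma)$ makes $\psi_U$ $\GFq$-equivariant: right translation by $g$ on the source corresponds under $\psi_U$ to the given action of $g$ on $M$. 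Since a $\GFq$-equivariant homomorphism between $\GFq$-Galois $L$-algebras is automatically an isomorphism, we conclude $L \tensor_\varphi S \cong M$ as $\GFq$-algebras over $L$.

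The ideas above are short; the real work is bookkeeping. In (1) one must check carefully that the shearing map really corresponds to the Chase-Harrison-Rosenberg comparison map and pin down flatness (here $\G$ is a smooth variety, so \'etale implies flat). In (2) the delicate points --- the ones I expect to cost the most care --- are the functoriality checks underlying ``evaluation at $U$'' (that $\sigma(s(U)) = s(\sigma(U))$ for $s \in \FF_q[\G]$, hence that $\psi_U$ is well defined and $\GFq$-equivariant), and phrasing the argument so that it survives when $M$ is a $\GFq$-Galois \emph{algebra} rather than a field; the latter works because everything is expressed through the cocycle $\rho$ and the equivariance of $\psi_U$, never using that $M$ is a domain.
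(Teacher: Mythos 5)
Your proof is correct. For part~(2) your argument coincides step for step with the paper's: use triviality of $H^1$ to find $U \in \G(M)$ with $\rho(\sigma) = U^{-1}\sigma(U)$, check that $b := \lambda(U) = U(U^{(q)})^{-1}$ is Galois-invariant and hence lies in $\G(L)$, take $\varphi$ to be evaluation at $b$, define $\psi\colon S\otimes_\varphi L \to M$ by evaluation at $U$, and invoke the automatic-isomorphism property of $\GFq$-equivariant homomorphisms between Galois $\GFq$-algebras over $L$. For part~(1), after the shared coset computation ($\lambda(X)=\lambda(Y)$ iff $Y^{-1}X\in\GFq$), you take a genuinely different and somewhat heavier route. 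The paper identifies $R$ with $S^{\GFq}$ by reading $\FF_q[\G]\cong\FF_q[\G/\GFq]$ off the bijection on points, and then verifies the Galois ring-extension criterion directly at maximal ideals: if $g$ stabilizes $\mathfrak{m}_{X_0}$ and acts trivially on $S/\mathfrak{m}_{X_0}$, then $X_0g=X_0$ and so $g=\mathrm{id}$. You instead prove $\lambda$ is \'etale via the differential computation $d\lambda_e=\mathrm{id}$ (since $dF=0$) spread out by the twisted equivariance $\lambda(X_0Y)=X_0\,\lambda(Y)\,F(X_0)^{-1}$, combine this with Lang--Steinberg surjectivity to recognize $\lambda$ as a finite \'etale $\GFq$-cover, package the free action into the shearing isomorphism $\G\times\GFq\cong\G\times_{\lambda,\G,\lambda}\G$, and extract both $R=S^{\GFq}$ and the Chase--Harrison--Rosenberg comparison isomorphism $S\otimes_R S\cong\prod_{g\in\GFq}S$ from faithfully flat descent. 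Your version is functorially cleaner, makes explicit where smoothness of $\G$ enters (for \'etale $\Rightarrow$ flat), and produces the comparison map directly; the paper's version is more elementary and avoids any appeal to descent. Both routes are complete and establish the same statement.
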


\begin{proof}
First note that $\lambda$ is an epimorphism of varieties by the Lang-Steinberg Theorem. Hence the induced ring homomorphism $\lambda^\ast$ is injective, and thus defines a proper extension of rings. 
We define a left action of $\GFq$ on $S$ by $gp(X) = p(Xg)$. Let $\Gamma = \GFq$.  Then it is easy to check that $\lambda(X) = \lambda(Y)$ if and only if $X\Gamma = Y\Gamma$ as cosets. Thus, we can identify $\G/\Gamma$ with $\G$ bijectively. Consequently, we have an induced map that identifies $\FF_q[\G]$ with $\FF_q[\G/\Gamma]$. However, the functions in $\FF_q[\G/\Gamma]$ are precisely those that are invariant under the action of $\Gamma$; in other words, $R = \FF_q[\G/\Gamma] = \FF_q[\G]^\Gamma = S^\Gamma$.  

By Hilbert's Nullstellensatz, we know that the maximal ideals of $S$ correspond to ideals of functions that vanish at a certain point $X_0 \in \G(\overline \F_q)$.  Let $\mathfrak{m}_{X_0} = \{p\in S:\, p(X_0) = 0\}$ be a maximal ideal in $S$, and let $g\in \G(\overline \F_q)$.  We need to check that if $g\mathfrak{m}_{X_0} = \mathfrak{m}_{X_0}$ and $gp = p \pmod {\mathfrak{m}_{X_0}}$ for all $p\in S$ then $g = \id$.  Since $\G(\F_q)$ acts on $S$ by $(gp)(X) = p(Xg)$, the condition $gp = p \pmod{\mathfrak{m}_{X_0}}$ for all $p\in S$ is equivalent to $p(X_0g) = p(X_0)$ for all $p\in S$.  This implies that $X_0g = X_0$ and so $g = \id$, as needed.  We have now shown that $S/R$ is a Galois extension of rings with group $\G(\F_q)$.
\newline \newline \indent
To show genericity, let $M/L$ be a Galois algebra with $\gal(M/L) = \GFq$. Choose an isomorphism $\rho:\gal(M/L) \to \G(\F_q)$. Then we have the following:
$$\gal(M/L) \stackrel{\rho}{\cong} \GFq \stackrel{i}{\hookrightarrow} \bd{G}(M).$$
Note that $\rho$ is a cocycle from $\gal(M/L)$ to $\bd{G}(M)$. By assumption, there exists some $U \in \bd{G}(M)$ such that
$$\rho(\sigma) = U^{-1}\sigma(U)  \text{ for all } \sigma \in \gal(M/L).$$
Now let $A = U(U^{-1})^{(q)}$. Then $A$ is fixed by $\GFq$, so $A$ must be in $\bd{G}(L)$. We define the following maps:
\begin{eqnarray*}
&& \varphi_U: S \rightarrow M \text{ by } f \mapsto f(U)\\
&& \varphi_A: R \rightarrow L \text{ by } f \mapsto f(A)\\
\end{eqnarray*}
Then the following diagram commutes: 
\newline

\begin{center}
$\begin{CD}
S @>\varphi_U>> M\\
@AAA @AAA\\
R @>\varphi_A>> L\\
\end{CD}$
\end{center}
It remains to verify that $S\otimes_{\varphi_A}L \cong M$. To this end, we define the map
$$\psi: S\otimes_{\varphi_A}L \rightarrow M$$
$$s \otimes a \mapsto \varphi_U(s)a$$
Then we have the following diagram:
\begin{center}
$\begin{CD}
S\otimes_{\varphi_A}L @>\psi>> M\\
@AAA @AAA\\
R\otimes_{\varphi_A}L @= L
\end{CD}$
\end{center}
Since $\gal(S\otimes_{\varphi_A}L) = \GFq$, $\psi$ is a $\GFq$-algebra homomorphism defined over $L$, it is automatically an isomorphism by Proposition 5.1.1 in \cite{Jensen-Ledet-Yui}.
\end{proof}



In order to use Theorem~\ref{thm:Gen-Ext} to produce generic extensions, we first prove a lemma. 
\begin{lem}
\label{lem:Gen-Ext}
Assume the notation as in Theorem~\ref{thm:Gen-Ext}. Fix some $g\in R$ and let $A \in \G(L)$.  Then there is a matrix $B\in \G(L)$ that is Frobenius equivalent to $A$ and satisfies $g(B) \neq 0$.
\end{lem}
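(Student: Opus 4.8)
The plan is to exhibit \emph{all} matrices Frobenius equivalent to $A$ over $L$ as the image of a single morphism of varieties defined over $L$, and then to find an $L$-rational point of that image avoiding the hypersurface $g=0$. Concretely, I would introduce the twisted conjugation morphism
$$\mu_A \colon \G \longrightarrow \G, \qquad X \longmapsto X^{-1} A X^{(q)},$$
which is defined over $L$, being assembled from inversion and multiplication in $\G$ and the Frobenius $X\mapsto X^{(q)}$ (all defined over $\F_q$) together with translation by the fixed element $A\in\G(L)$. For any $U\in\G(L)$ the matrix $B:=\mu_A(U)=U^{-1}AU^{(q)}$ lies in $\G(L)$ and is Frobenius equivalent to $A$ over $L$ (with transition matrix $U\in\G(L)\subset\gln n(L)$); so it suffices to produce $U\in\G(L)$ with $\mu_A(U)$ in the principal open set $D(g):=\{x\in\G : g(x)\neq 0\}$, where I regard $g\in R\subset\F_q[\G]$ as a regular function on $\G$ (which of course is nonzero, as it must be to form the localization).

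The first real step is to show $\mu_A$ is dominant. By the Lang--Steinberg theorem (Theorem~\ref{thm:L-S}) there is $X_0\in\G(\overline L)$ with $A=X_0(X_0^{(q)})^{-1}$. Substituting $X=X_0Y$ gives
$$\mu_A(X_0 Y) = Y^{-1}X_0^{-1}\bigl(X_0(X_0^{(q)})^{-1}\bigr)X_0^{(q)}Y^{(q)} = Y^{-1}Y^{(q)},$$
so $\mu_A$ has the same image as the map $Y\mapsto Y^{-1}Y^{(q)}$. Reparametrizing via $W=Y^{-1}$ turns this into $W\mapsto W(W^{(q)})^{-1}$, which is surjective on $\overline L$-points by a second application of Theorem~\ref{thm:L-S}. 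Hence $\mu_A$ is surjective on $\overline L$-points, in particular dominant.

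The second step is to descend to $L$-rational points. Since $g\neq 0$, the set $D(g)$ is a nonempty open subvariety of $\G$; as $\mu_A$ is dominant and defined over $L$, the preimage $\mu_A^{-1}(D(g))$ is a nonempty open subvariety of $\G$ defined over $L$. A connected linear algebraic group over the perfect field $\F_q$ is unirational over $\F_q$, and unirationality is preserved under the base change to $L$, so $\G(L)$ is Zariski dense in $\G$ whenever $L$ is infinite. Choosing $U\in\G(L)\cap\mu_A^{-1}(D(g))$ and setting $B=U^{-1}AU^{(q)}$ then produces a matrix in $\G(L)$ that is Frobenius equivalent to $A$ and satisfies $g(B)\neq 0$, as desired.

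I expect the last step to be the main obstacle: once Lang--Steinberg is in hand the dominance of $\mu_A$ is soft, but extracting an honest $L$-point from the nonempty open set $\mu_A^{-1}(D(g))$ genuinely uses that $L$ is infinite (via unirationality of $\G$). This is harmless for the applications of the lemma in this paper, where $L$ is always a rational function field; covering a finite field $L$ would instead require a Lang--Weil estimate ensuring that, after a suitable finite extension, $\#\G(L)$ exceeds the number of points on the locus $g=0$.
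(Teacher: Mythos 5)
Your proof is correct and follows essentially the same route as the paper: both use the twisted Lang map $X \mapsto X^{-1}AX^{(q)}$, invoke Lang--Steinberg to conclude it is dominant (so $\ell^*(g)\neq 0$), and then use that $L$ is infinite to pick an $L$-point avoiding the zero locus of $g$. You spell out the change-of-variables reduction to the Lang map and the unirationality argument justifying Zariski density of $\G(L)$, both of which the paper leaves implicit, but the underlying argument is the same.
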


\begin{proof}
Consider the map
\begin{gather*}
\ell:\G\to \G\\
X \mapsto X\inv A X^{(q)}.
\end{gather*}
It follows from the Lang-Steinberg theorem that the induced map
$\ell^*:L[\G]\to L[\G]$
is injective.  In particular, $\ell^*(g)$ is not zero.  Since $L$ is infinite, we can therefore choose $Y\in \G(L)$ such that $g(Y\inv A Y^{(q)}) \neq 0$.  
\end{proof}

\begin{cor}
\label{cor:Gen-Ext}
Assuming the same notation as in Theorem~\ref{thm:Gen-Ext}, suppose 
\begin{enumerate}
\item $R[\frac{1}{g}]$ is a localized polynomial ring for some nonzero polynomial $g$, and
\item $H^1(L,\G) = \{1\}$ for all fields $L$ containing $\FF_q$.
\end{enumerate}
Then the ring extension $S[\frac{1}{\lambda^\ast g}]/R[\frac{1}{g}]$ is $\GFq$-generic.
\end{cor}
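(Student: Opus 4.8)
The definition of a generic $\GFq$-extension asks for two things. The first — that the bottom ring be a localized polynomial ring — is exactly hypothesis~(1), so there is nothing to prove. The second is the specialization property: for every field $L\supset\FF_q$ and every Galois $\GFq$-algebra $M/L$, an $\FF_q$-algebra homomorphism $\varphi\colon R[1/g]\to L$ with $S[1/\lambda^\ast g]\tensor_\varphi L\cong M$ as $\GFq$-algebras over $L$. The plan is first to see that $S[1/\lambda^\ast g]/R[1/g]$ is still a Galois ring extension with group $\GFq$ (by localizing the extension of Theorem~\ref{thm:Gen-Ext}(1)), and then to obtain the specialization property by rerunning the argument of Theorem~\ref{thm:Gen-Ext}(2) and using Lemma~\ref{lem:Gen-Ext} to push the resulting specialization off the zero locus of $g$.

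Two quick reductions set things up. Since $g$ lies in $R=S^{\GFq}$, it is $\GFq$-fixed in $S$, so $S[1/\lambda^\ast g]=S\tensor_R R[1/g]$ inherits an $R[1/g]$-linear $\GFq$-action; as base change of a Galois extension of commutative rings is again Galois with the same group (\cite{Jensen-Ledet-Yui}), the Galois claim follows from Theorem~\ref{thm:Gen-Ext}(1). And for the specialization step, hypothesis~(2) forces $H^1(\gal(M/L),\G(M))=\{1\}$: for $M/L$ a field extension this is the injectivity of inflation $H^1(\gal(M/L),\G(M))\hookrightarrow H^1(L,\G)$, and in general one writes $\G(M)=\mathrm{Ind}_{\gal(N/L)}^{\GFq}\G(N)$ for the underlying field $N$ and applies Shapiro's lemma for non-abelian $H^1$. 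So Theorem~\ref{thm:Gen-Ext}(2) is applicable.

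Unwinding the proof of Theorem~\ref{thm:Gen-Ext}(2) produces an element $U\in\G(M)$ splitting the cocycle $\rho\colon\gal(M/L)\to\GFq\subset\G(M)$ as $\rho(\sigma)=U\inv\sigma(U)$, the element $A=\lambda(U)=U(U^{(q)})\inv\in\G(L)$, and the map $\varphi_A\colon R\to L$, $f\mapsto f(A)$, with $S\tensor_{\varphi_A}L\cong M$; the only obstruction to extending $\varphi_A$ over $R[1/g]$ is that $g(A)$ might vanish. Now apply Lemma~\ref{lem:Gen-Ext} to $A$ and $g$ to get $Y\in\G(L)$ with $B:=Y\inv A Y^{(q)}$ satisfying $g(B)\neq0$. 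The point is that $U':=Y\inv U$ splits the \emph{same} cocycle — since $Y\in\G(L)$ is $\gal(M/L)$-fixed, $(U')\inv\sigma(U')=U\inv\sigma(U)=\rho(\sigma)$ — while $\lambda(U')=Y\inv\lambda(U)Y^{(q)}=B$. Hence Theorem~\ref{thm:Gen-Ext}(2), rerun with $U'$ in place of $U$, yields the map $\varphi_B\colon R\to L$, $f\mapsto f(B)$, with $S\tensor_{\varphi_B}L\cong M$ as $\GFq$-algebras, and now $\varphi_B(g)=g(B)\neq0$, so $\varphi_B$ extends to $R[1/g]\to L$. Finally $S[1/\lambda^\ast g]\tensor_{\varphi_B}L\cong(S\tensor_R R[1/g])\tensor_{R[1/g]}L\cong S\tensor_{\varphi_B}L\cong M$, the inverted element $\lambda^\ast g$ specializing to the unit $g(B)\in L^\times$ of $M$ so that the extra localization is harmless; and the isomorphism, being the one of Theorem~\ref{thm:Gen-Ext}(2), is $\GFq$-equivariant. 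This gives genericity.

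I do not anticipate a deep obstacle: the single point that needs care — and, I think, the only real subtlety — is aligning the ambiguity in the splitting cochain $U$ (it may be replaced by $VU$ for any $V\in\G(L)$) with the Frobenius-equivalence ambiguity $A\mapsto V A(V^{(q)})\inv$ that Lemma~\ref{lem:Gen-Ext} exploits, together with the observation that inverting $\lambda^\ast g$ upstairs does nothing after specialization since its image lies in $L^\times$.
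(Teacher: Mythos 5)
Your proof is correct and takes essentially the same route as the paper's: localize the Galois extension, use the cohomology hypothesis to split the cocycle and obtain $U$ with $A=\lambda(U)\in\G(L)$, invoke Lemma~\ref{lem:Gen-Ext} to produce $Y\in\G(L)$ with $B=Y^{-1}AY^{(q)}$ off the zero locus of $g$, note that $V=Y^{-1}U$ splits the same cocycle with $\lambda(V)=B$, and conclude by tensoring. The only extra content you supply is the explicit bridge from $H^1(L,\G)=\{1\}$ to $H^1(\gal(M/L),\G(M))=\{1\}$, which the paper leaves implicit.
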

\begin{proof}
Recall that the extension 
\begin{center}
\begin{tikzpicture}
\node (1) at (0,0) {$\F_q[\G]$};
\node (2) at (0,1.5) {$\F_q[\G]$};
\draw[right hook->] (1) -> node[right]{$\lambda^*$} (2);
\end{tikzpicture}
\end{center}
is $\G(\F_q)$-Galois.  
Hence localizing at $g$ it follows that the extension  
\begin{center}
\begin{tikzpicture}
\node (1) at (0,0) {$R = \F_q[\G][1/g]$};
\node (2) at (0,1.5) {$S = \F_q[\G][1/\lambda^*g]$};
\draw[right hook->] (1) -> node[right]{$\lambda^*$} (2);
\end{tikzpicture}
\end{center}
is also $\G(\F_q)$-Galois.  Moreover, $R$ is a localized polynomial ring.  This is our candidate for a generic extension. 

Let $L$ be a field extending $\F_q$ and let $M$ be a Galois $\G(\F_q)$-algebra over $L$.   Choose an isomorphism $\rho:\gal(M/L)\to \G(\F_q)$.  Then we can view $\rho$ as a 1-cocycle via 
\[
\gal(M/L) \xrightarrow{\rho} \G(\F_q) \hookrightarrow \G(M).
\]
The assumption on the cohomology of $\G$ guarantees that $\rho$ is trivial, and hence $\rho(\sigma) = U\inv \sigma(U)$ for some $U\in \G(M)$.  Let $A = U(U\inv)^{(q)} \in \G(L)$.  

By Lemma~\ref{lem:Gen-Ext} we can choose $Y \in \G(L)$ so that $B = Y\inv A Y^{(q)}$ satisfies $g(B) \neq 0$.  Define the matrix $V = Y\inv U$ so that $B = V(V\inv)^{(q)}$.  Then $\rho(\sigma) = V\inv \sigma(V)$ and so
the following diagram commutes.
\begin{center}
\begin{tikzpicture}
\node (1) at (0,0) {$\F_q[\G]$};
\node (2) at (0,-1.5) {$\F_q[\G]$};
\node (3) at (4,0) {$M$};
\node (4) at (4,-1.5) {$L$};

\draw[right hook->] (2) -> node[right]{$\lambda^*$} (1);
\draw (3) -> (4);
\draw[->] (1) -> node[below]{$\psi$} node[above]{$f \mapsto f(V)$} (3);
\draw[->] (2) -> node[above]{$\varphi$} node[below]{$f\mapsto f(B)$} (4);
\end{tikzpicture}
\end{center}
Moreover, since $g(B)$ is non-zero, it is possible to extend $\varphi$ to a map $\F_q[\G][1/g] \to L$.  Hence localizing at $g$ yields the following commutative diagram.
\begin{center}
\begin{tikzpicture}
\node (1) at (0,0) {$S = \F_q[\G][\frac{1}{\lambda^*g}]$};
\node (2) at (0,-1.5) {$R = \F_q[\G][\frac{1}{g}]$};
\node (3) at (5,0) {$M$};
\node (4) at (5,-1.5) {$L$};

\draw[right hook->] (2) -> node[right]{$\lambda^*$} (1);
\draw (3) -> (4);
\draw[->] (1) -> node[below]{$\psi$} node[above]{$f \mapsto f(V)$} (3);
\draw[->] (2) -> node[above]{$\varphi$} node[below]{$f\mapsto f(B)$} (4);
\end{tikzpicture}
\end{center}
Now consider the tensor product $S\tensor_\varphi L$.  It is a Galois $\G(\F_q)$-algebra over $R\tensor_\varphi L$ and the following diagram commutes.
\begin{center}
\begin{tikzpicture}
\node (1) at (0,0) {$S \tensor_\varphi L$};
\node (2) at (0,-1.5) {$R\tensor_\varphi L$};
\node (3) at (4.5,0) {$M$};
\node (4) at (4.5,-1.5) {$L$};

\draw (2) -> (1);
\draw (3) -> (4);
\draw[->] (1) ->  node[above]{$s \tensor y \mapsto \psi(s)y$} (3);
\draw[->] (2) -> node[above]{$\cong$} node[below]{$r \tensor x \mapsto \varphi(r)x$} (4);
\end{tikzpicture}
\end{center}
Since the bottom map is an isomorphism of $\G(\F_q)$-algebras over $L$, we obtain automatically that the top map is  an isomorphism as well \cite{Jensen-Ledet-Yui}.  Hence the extension $S[\frac{1}{\lambda^\ast g}]/R[\frac{1}{g}]$ is generic. 
\end{proof}

\begin{rem}
If $\F_q[\G]$ is already a localized polynomial ring, then taking $g = 1$ in Corollary~\ref{cor:Gen-Ext} shows that the ring extension $S/R$ given in Theorem~\ref{thm:Gen-Ext} is generic for $\G(\F_q)$ over $\F_q$.
\end{rem}

We conclude this section with a useful theorem from \cite{Jensen-Ledet-Yui} that allows us to extract generic extensions from a semidirect product. We will use this result extensively in Section \ref{sect:cyclic}. 

\begin{thm} 
\label{thm:dir-factor}
Let $S/R$ be a generic $E$-extension, where $E = N \rtimes G$. Then $S^N/R$ is a generic $G$-extension.
\end{thm}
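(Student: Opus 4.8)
The plan is to verify the two conditions in the definition of a generic $G$-extension for $S^N/R$. The first condition is almost free: the base ring $R$ is literally the same as for $S/R$, hence still a localized polynomial ring over $K$. For the Galois condition, since $N$ is normal in $E$, the ring-theoretic Galois correspondence for Galois extensions of commutative rings (see \cite[Ch.~4]{Jensen-Ledet-Yui}) shows that $S^N/R$ is again a Galois extension of rings, now with group $E/N \cong G$.

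For the second condition, fix a field $L \supseteq K$ and a Galois $G$-algebra $M/L$; the goal is to produce a $K$-algebra homomorphism $\varphi\colon R \to L$ with $S^N \otimes_\varphi L \cong M$ as $G$-algebras over $L$. The key move is to lift $M$ to a Galois $E$-algebra. Using the splitting $G \hookrightarrow E$ provided by the semidirect-product decomposition $E = N \rtimes G$, one forms the induced $E$-algebra $\widetilde{M} = \operatorname{Ind}_G^E M$ over $L$ as in \cite[Ch.~5]{Jensen-Ledet-Yui} --- concretely the $G$-equivariant maps $E \to M$, with $E$ acting by translation. This $\widetilde{M}/L$ is a Galois $E$-algebra, and passing to $N$-invariants (which forces such a map to factor through $E/N = G$) yields a natural isomorphism $\widetilde{M}^{\,N} \cong M$ of $G$-algebras over $L$. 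Now invoke the genericity of $S/R$ applied to the $E$-Galois algebra $\widetilde{M}/L$: there is a $K$-algebra homomorphism $\varphi\colon R \to L$ with $S \otimes_\varphi L \cong \widetilde{M}$ as $E$-algebras over $L$.

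It remains to take $N$-invariants. Since Galois extensions of commutative rings base-change to Galois extensions, $S^N \otimes_\varphi L$ is a Galois $G$-algebra over $L$; likewise $(S\otimes_\varphi L)^N \cong \widetilde{M}^{\,N} \cong M$ is a Galois $G$-algebra over $L$, being the invariants of the $E$-Galois algebra $\widetilde{M}$ under the normal subgroup $N$. The natural map $S^N \otimes_\varphi L \to (S \otimes_\varphi L)^N$ is a homomorphism of $G$-algebras over $L$, hence an isomorphism by \cite[Prop.~5.1.1]{Jensen-Ledet-Yui} (the same principle used in the proofs above). Composing, $S^N \otimes_\varphi L \cong M$ as $G$-algebras over $L$, so $S^N/R$ is a generic $G$-extension. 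The one substantive ingredient is the induced-algebra construction together with the identity $\widetilde{M}^{\,N} \cong M$: this is precisely where the hypothesis that $E$ splits over $G$ is used, and it is the step I expect to require the most care (it is carried out in detail in \cite{Jensen-Ledet-Yui}), whereas the stability of the Galois property under base change and under passage to invariants by a normal subgroup is routine.
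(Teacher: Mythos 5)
The paper does not supply a proof here; it merely cites \cite{Jensen-Ledet-Yui}, where this is proved by essentially the same induced-algebra argument you give: use the section $G \hookrightarrow E$ coming from $E = N \rtimes G$ to form $\widetilde{M} = \operatorname{Ind}_G^E M$, apply genericity of $S/R$ to $\widetilde{M}$, and descend by taking $N$-invariants. Your proof is correct; the one point worth spelling out is that the natural map $S^N \otimes_\varphi L \to (S\otimes_\varphi L)^N$ is well-defined (its image lies in the $N$-invariants because $N$ acts trivially on both tensor factors), after which the appeal to \cite[Prop.~5.1.1]{Jensen-Ledet-Yui} --- a $G$-equivariant $L$-algebra map between Galois $G$-algebras over $L$ is an isomorphism --- finishes the argument exactly as you say.
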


\begin{proof}
See \cite{Jensen-Ledet-Yui}.
\end{proof}

\section{Applications to Linear Algebraic Groups}
\label{sect:app}
In this section, we give some applications to the theory developed in Section~\ref{sect:Thm}. 

\subsection{Unipotent groups}
Recall the definition of a unipotent group.
\begin{defn}
A \textit{unipotent group} $\G$ is a subgroup of $\gln{n}$ such that for all $g \in \G$, the element $1-g$ is nilpotent.
\end{defn}

Here we consider only closed connected unipotent groups, in order to invoke the Lang-Steinberg theorem and subsequently Corollary~\ref{cor:Gen-Ext}. \\
\par Since it is always possible to find a change of basis such that the elements of a unipotent group are upper-triangular, with 1's on the diagonal, we adopt the following notation:

$$U^n := \left\{ M \in \gln n : M \text{ has the form } \begin{bmatrix}
1 & * & \cdots & * \\ 0 & 1 & \cdots & * \\ \vdots & \ddots & \ddots & * \\ 0 & \cdots & 0 & 1
\end{bmatrix}\right\} \subset \gln n$$

Then every unipotent group is a subgroup of $U^n$ for some $n$.
\\
\par
Note that for a finite group $G$ to be realized as the subgroup of $U^n(K)$, where $K \supset \FF_q$, it is necessary that $G$ is a $p$-group. 

\begin{prop}[Hilbert's Theorem 90 for Unipotent Groups] \label{prop:H90-Uni} Let $U$ be a unipotent group defined over $\FF_q$, and $L$ a field containing $\FF_q$. Then $H^1(L, U)$ is trivial.
\end{prop}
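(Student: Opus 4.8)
The plan is to reduce to the most basic case, the additive group $\mathbb{G}_a$, using the structure theory of connected unipotent groups in characteristic $p$. A connected unipotent group $U$ over a perfect field (in particular over $\mathbb{F}_q$, and we will want to work after passing to the relevant base field) admits a filtration
\[
U = U_0 \supset U_1 \supset \cdots \supset U_r = \{1\}
\]
by closed normal subgroups, each defined over $\mathbb{F}_q$, with successive quotients $U_i/U_{i+1}$ isomorphic (as algebraic groups over $\mathbb{F}_q$) to $\mathbb{G}_a$. Indeed one can take the descending central series, or more simply observe that $U \subset U^n$ is a closed subgroup of the strictly upper triangular group, and the commutator/coordinate filtration on $U^n$ induces such a filtration on $U$. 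The key point is that each successive quotient is a vector group, hence a product of copies of $\mathbb{G}_a$.

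Next I would invoke the long exact sequence in Galois cohomology. For a short exact sequence of algebraic groups $1 \to U_{i+1} \to U_i \to \mathbb{G}_a \to 1$ over $\mathbb{F}_q$, and for any field $L \supset \mathbb{F}_q$, one gets an exact sequence of pointed sets
\[
H^1(L, U_{i+1}) \longrightarrow H^1(L, U_i) \longrightarrow H^1(L, \mathbb{G}_a).
\]
(Strictly, since $U_{i+1}$ need not be central one should be slightly careful, but the sequence of pointed sets is still exact in the middle, which is all that is needed: if a class in $H^1(L,U_i)$ dies in $H^1(L,\mathbb{G}_a)$ it comes from $H^1(L,U_{i+1})$, and surjectivity onto the relevant terms follows from the vanishing of the outer groups.) The classical additive form of Hilbert 90 — equivalently, the normal basis theorem, or the vanishing $H^1(L, \mathbb{G}_a) = 0$ coming from the fact that $\mathbb{G}_a$ as a $\gal(L_{sep}/L)$-module is induced — gives $H^1(L, \mathbb{G}_a) = \{1\}$. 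Feeding this into the exact sequence, an easy downward induction on $i$ (base case $H^1(L, U_r) = H^1(L, \{1\}) = \{1\}$) yields $H^1(L, U_i) = \{1\}$ for all $i$, and in particular $H^1(L, U) = H^1(L, U_0) = \{1\}$.

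The main obstacle is not any single computation but rather the bookkeeping around non-central extensions: $H^1$ of a nonabelian group is only a pointed set, so "exactness" must be interpreted correctly, and the twisting argument needed to conclude that the fiber over the basepoint is everything (not just that the kernel is trivial) requires either that the subgroups in the filtration be central — which can be arranged by using the descending central series, since a connected unipotent group is nilpotent — or an explicit twisting/cocycle argument. I would therefore phrase the filtration using the descending central series of $U$, so that each $U_{i+1}$ is central in $U_i$; then each extension $1 \to \mathbb{G}_a^{k_i} \to U_i \to U_i/U_{i+1} \to 1$ has central (in fact abelian, vector group) kernel, the long exact cohomology sequence extends one term further to the left with $H^1(L,\mathbb{G}_a^{k_i}) = 0$ controlling the fibers, and the induction goes through cleanly. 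A remark that the quotients being vector groups (rather than forms of them) uses that we may assume the ambient unipotent group is split — true here since $U \subset U^n$ — would complete the argument.
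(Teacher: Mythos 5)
The paper does not actually prove this proposition — it simply cites Proposition~6 of Serre's \emph{Galois Cohomology} — and your reconstruction follows exactly that standard argument: filter $U$ with $\mathbf{G}_a$ quotients, invoke additive Hilbert~90, and induct using the exact sequence of pointed sets. The conclusion is correct.

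One correction to your final paragraph, which raises a non-issue and then proposes a fix that does not actually achieve what you claim. Twisting and centrality are \emph{not} needed here. For $1 \to A \to B \to C \to 1$, exactness of $H^1(L,A)\to H^1(L,B)\to H^1(L,C)$ at the middle term as pointed sets already says that the preimage of the base point of $H^1(L,C)$ equals the image of $H^1(L,A)$. When $H^1(L,C)$ is a singleton, that preimage is all of $H^1(L,B)$; when $H^1(L,A)$ is also a singleton, the image is just the base point. Hence $H^1(L,B)$ is trivial with no further work. Twisting only enters when one must control fibers over \emph{non}trivial classes in $H^1(L,C)$, and there are none. Separately, the descending central series does not make $U^{(i+1)}$ central in $U^{(i)}$ — the commutator identity only gives $[U^{(i)},U^{(i+1)}]\subseteq U^{(2i+1)}$, not triviality — and its successive quotients, though commutative connected unipotent, need not be vector groups in characteristic $p$: they can be iterated extensions of Witt type, killed only by higher powers of $p$. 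The clean input to use is that a connected unipotent group over the perfect field $\FF_q$ is $\FF_q$-split (Borel), which directly hands you a normal composition series with $\mathbf{G}_a$ quotients, after which the pointed-set induction above finishes the proof.
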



This is Proposition 6 from \cite{Serre}. Since the conditions needed to apply Corollary~\ref{cor:Gen-Ext} are fulfilled by Proposition~\ref{prop:H90-Uni}, we have the following.
\begin{thm} Let $K \supset \FF_q$ be a field, $U$ a closed connected unipotent group, and $\lambda^\ast$ the ring homomorphism induced by the Lang-Steinberg map, as in Theorem~\ref{thm:Gen-Ext}. Let $\FF_q[U]$ be the polynomial ring of regular functions on $U$, then the ring extension $\FF_q[U]/\im(\lambda^\ast)$ is $U(\FF_q)$-generic.
\end{thm}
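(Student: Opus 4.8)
The plan is to verify the two hypotheses of Corollary~\ref{cor:Gen-Ext} and then quote it directly. First I would check condition (1): since $U$ is a closed connected unipotent group over $\FF_q$, it is well known that $U$ is isomorphic as a variety to affine space $\mathbb{A}^N$ (one can exhibit this isomorphism by successively splitting off one-parameter subgroups $\mathbb{G}_a$ along a central series, using that each successive quotient is a vector group defined over $\FF_q$). Hence $\FF_q[U]$ is itself a polynomial ring, so $R = \lambda^\ast \FF_q[U]$ becomes a localized polynomial ring after inverting no polynomial at all; that is, we may take $g = 1$. Second, condition (2) — that $H^1(L, U) = \{1\}$ for every field $L \supset \FF_q$ — is exactly Proposition~\ref{prop:H90-Uni} (Hilbert 90 for unipotent groups).

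With both hypotheses in hand, Corollary~\ref{cor:Gen-Ext} with $g = 1$ asserts that the ring extension $S[\tfrac{1}{\lambda^\ast g}]/R[\tfrac{1}{g}] = \FF_q[U]/\lambda^\ast\FF_q[U] = \FF_q[U]/\im(\lambda^\ast)$ is $U(\FF_q)$-generic, which is precisely the statement to be proved. So the proof is essentially a two-line invocation once the variety structure of $U$ is recorded.

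The only point that deserves a sentence or two of justification is why $\FF_q[U]$ is a genuine (honest, not merely localized) polynomial ring, i.e.\ why $U$ is $\FF_q$-isomorphic to affine space as a variety. The cleanest argument: a connected unipotent group over a perfect field admits a central series $U = U_0 \supsetneq U_1 \supsetneq \cdots \supsetneq U_r = \{1\}$ with each $U_i$ normal in $U$ and defined over $\FF_q$, and each quotient $U_{i-1}/U_i$ isomorphic to a product of copies of $\mathbb{G}_a$ over $\FF_q$ (here we use that $\FF_q$ is perfect, so there are no unipotent complications like Witt-vector twists that obstruct splitting). Each extension $1 \to U_i \to U_{i-1} \to U_{i-1}/U_i \to 1$ splits as varieties over $\FF_q$ because $U_{i-1}/U_i \cong \mathbb{A}^{d_i}$ is affine space and, by Lang-Steinberg or directly by the vanishing of coherent cohomology on affine space, the projection admits a section; inductively this gives $U \cong \mathbb{A}^N$ with $N = \dim U$ over $\FF_q$. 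Therefore $\FF_q[U] \cong \FF_q[x_1, \dots, x_N]$ is a polynomial ring.

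I expect the main obstacle to be purely expository: deciding how much of the structure theory of unipotent groups over $\FF_q$ to reproduce versus cite. There is no real mathematical difficulty — everything hard has been packaged into Corollary~\ref{cor:Gen-Ext} and Proposition~\ref{prop:H90-Uni} — but one must be a little careful to invoke \emph{perfectness} of $\FF_q$ (so that the relevant exact sequences split and $\FF_q[U]$ is honestly polynomial), since over imperfect fields a connected unipotent group need not be $\FF_q$-split and $\FF_q[U]$ need not be a polynomial ring. Granting that, the proof is a direct application of the corollary with $g=1$, and I would keep it to a few sentences.
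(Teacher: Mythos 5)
Your proposal is correct and is essentially identical to the paper's argument: the paper also simply observes that $\FF_q[U]$ is a polynomial ring (so one may take $g=1$), cites Proposition~\ref{prop:H90-Uni} for the vanishing of $H^1$, and invokes Corollary~\ref{cor:Gen-Ext}. The only material you add is the sketch of why a connected unipotent group over the perfect field $\FF_q$ is $\FF_q$-isomorphic to affine space, which the paper takes as given in the statement of the theorem.
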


\subsection{Solvable groups}
 
\begin{defn} A linear algebraic group $G$ is \textit{solvable} if there is a subnormal series $\{G_i\}_{i=0}^n$ of closed subgroups such that
$$\{1\} = G_0 \vartriangleleft G_1 \vartriangleleft \cdots \vartriangleleft G_n = G$$
and $G_{i+1}/G_i$ is abelian for all $i$.
\end{defn}
We recall here an important structural theorems regarding solvable groups which can be found as part (iv) of Theorem 10.6 in \cite{Borel}.
\begin{thm} 
\label{Solvable1}
Let $G$ be a connected solvable group. Then we have the decomposition $G = TU$, where $T$ is a torus, and $U$ is the unipotent radical of $G$. 
\end{thm}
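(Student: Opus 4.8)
The plan is to recover this from the standard structure theory of connected solvable groups, following \cite[Theorem 10.6]{Borel}; I outline the argument. The first step is to produce the unipotent radical concretely. By the Lie--Kolchin theorem, after conjugating we may assume $G$ is a closed subgroup of the upper triangular group $B_n \subset \gln n$. Using that the Jordan components of any $g$ lying in a closed subgroup of $\gln n$ again lie in that subgroup, the ``diagonal part'' map $\delta : G \to D_n$ sending $g$ to its semisimple part is a morphism of algebraic groups, since the diagonal parts of upper triangular matrices multiply. Its kernel is $G \cap U^n$, which is exactly the set $U := G_u$ of unipotent elements of $G$; this is a closed connected normal subgroup, it is unipotent, and since $G/U$ admits no nontrivial normal unipotent subgroup one has $U = R_u(G)$. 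Its image $T_0 := \delta(G)$ is a closed connected subgroup of the torus $D_n$ consisting of semisimple elements, hence a torus. Thus we obtain a short exact sequence $1 \to U \to G \xrightarrow{\delta} T_0 \to 1$ with $U = R_u(G)$ and $T_0$ a torus, and the decomposition $G = TU$ is equivalent to splitting this sequence, i.e.\ to finding a torus $T \subseteq G$ with $\delta|_T$ an isomorphism onto $T_0$.

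The second step is the splitting, which I would carry out by induction on $\dim U$. If $\dim U = 0$ then $U = 1$ and $G = T_0$ is already a torus. Otherwise, choose a minimal nontrivial closed connected subgroup $N$ of $U$ that is normal in $G$; a short argument shows $N$ is commutative, lies in the center of $U$, and is a vector group $\mathbb{G}_a^r$ on which $G$ acts through a linear representation of $T_0$. If $N \subsetneq U$, apply the inductive hypothesis to $G/N$, whose unipotent radical $U/N$ has strictly smaller dimension, to split the projection $G/N \to T_0$; pulling the resulting torus back to $G$ yields a connected solvable subgroup $H$ with $R_u(H) = N$, and since $\dim N < \dim U$ a second application of the inductive hypothesis splits $\delta|_H : H \to T_0$, producing the desired torus $T$. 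If instead $N = U$, then $U \cong \mathbb{G}_a^r$ is a rational $T_0$-module and $1 \to U \to G \to T_0 \to 1$ is an extension of the torus $T_0$ by this module; such extensions are classified by the rational (Hochschild) cohomology group $H^2(T_0, U)$. Since a torus is linearly reductive---every rational representation decomposes into weight spaces---its positive-degree cohomology vanishes, so $H^2(T_0, U) = 0$, the extension splits, and we again obtain $T$.

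Finally, one checks $T \cap U$ is a torus sitting inside a unipotent group, hence trivial, so that $G = T \ltimes U$ is in fact a semidirect product with $T$ a maximal torus and $U = R_u(G)$, as claimed. The main obstacle is the splitting in the second step: everything hinges on reducing the unipotent radical, via a minimal $G$-stable connected subgroup, to a vector group on which the torus acts linearly, and then invoking the vanishing of positive-degree cohomology of a torus. The characteristic-$p$ care needed to guarantee that $G_u$ is connected and that the minimal subgroup $N$ really is a vector group is the technical heart of the argument, and is precisely what is handled in \cite{Borel}.
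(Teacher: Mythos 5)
The paper does not actually prove this statement; it simply cites Theorem 10.6(iv) of Borel's \emph{Linear Algebraic Groups}, so there is no in-text argument to compare against. Your write-up is therefore necessarily a different route, namely a reconstruction of the structure theorem itself, and it is essentially sound: Lie--Kolchin trigonalization, the projection $B_n \twoheadrightarrow D_n$ with kernel $U_n$, the identification $G_u = G \cap U_n = R_u(G)$, and the inductive splitting of $1 \to U \to G \to T_0 \to 1$ by passing to a minimal $G$-stable connected subgroup $N \subseteq U$ and, in the base case $N = U$, invoking vanishing of $H^2(T_0, U)$ for the linearly reductive torus $T_0$. You are also right to flag the two positive-characteristic subtleties (connectedness of $G_u$, and that the minimal $N$ really is a vector group $\mathbf{G}_a^r$ carrying a linear $T_0$-action) as the genuine technical content that Borel's proof supplies; those are exactly the points that fail to be formal. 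One terminological slip worth fixing: the map $\delta$ you describe is the quotient $B_n \to B_n/U_n \cong D_n$, $g \mapsto \mathrm{diag}(g)$, which is \emph{not} in general the Jordan semisimple part $g_s$ of $g$ — the two are only conjugate. This costs nothing, since $\ker(\delta|_G)$ is still exactly $G_u$, but calling $\delta(g)$ the semisimple part conflates two different things. On balance, your cohomological phrasing of the splitting step (positive-degree rational cohomology of a torus vanishes by linear reductivity) is a clean, structural repackaging of the induction Borel carries out more by hand, and it is strictly more informative than the bare citation in the paper.
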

\begin{defn} Let $G$ be a connected solvable group defined over $k$. $G$ is split, or $k$-split, if it has a composition series $G = G_0 \supset G_1 \supset \cdots \supset G_n = \{e\}$ consisting of connected subgroups such that the successive quotients are isomorphic to $\mathbf{G}_a$ or $\mathbf{G}_m$. 
\end{defn}
The following theorem can be found as part (iii) of Theorem 15.4 in \cite{Borel}.
\begin{thm} 
\label{Solvable2}
If $k$ is a perfect field, $G$ splits over $k$ if and only if it is trigonalizable; that is, we can view such groups as upper triangular matrices. 
\end{thm}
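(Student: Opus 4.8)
This is part~(iii) of Theorem~15.4 in \cite{Borel}, so the plan is to reconstruct the standard argument rather than to invent anything new. Since the statement is an equivalence I would treat the two implications separately. For the easy direction, that a $k$-split connected solvable group $G$ is trigonalizable over $k$, I would induct on the length of a composition series $G = G_0 \supset G_1 \supset \cdots \supset G_n = \{e\}$ of connected $k$-subgroups with each quotient isomorphic to $\mathbf{G}_a$ or $\mathbf{G}_m$, producing a faithful $k$-representation of $G$ into the group $\mathbf{B}_N$ of invertible upper-triangular matrices. The inductive step reduces to showing that an extension of a trigonalizable group by $\mathbf{G}_a$ or by $\mathbf{G}_m$ is again trigonalizable: for a $\mathbf{G}_m$-quotient one adjoins a block in which $\mathbf{G}_m$ acts by a character, and for a $\mathbf{G}_a$-quotient one uses the standard embedding $x \mapsto \left(\begin{smallmatrix} 1 & x \\ 0 & 1 \end{smallmatrix}\right)$ and absorbs the extension class into block upper-triangular form. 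No hypothesis on $k$ is needed here.

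For the substantive direction, that a $k$-trigonalizable $G$ is $k$-split, I would fix a faithful $k$-embedding $G \hookrightarrow \mathbf{B}_n$, write $\mathbf{B}_n = \mathbf{D}_n \ltimes U^n$ with $\mathbf{D}_n$ the diagonal torus, and lean on two structural inputs. The first is Theorem~\ref{Solvable1}, together with the fact that over the perfect field $k$ the unipotent radical $U = R_u(G)$ and a maximal torus $T$ are defined over $k$, so that $G = T \ltimes U$ as $k$-groups (using $T \cap U = \{e\}$). The second is that a smooth connected unipotent group over a perfect field is itself $k$-split, i.e.\ carries a filtration by $k$-subgroups with successive quotients $\mathbf{G}_a$; applied to $U$ this supplies the unipotent part of the desired composition series. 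For the torus, I would observe that $T \cap U^n = \{e\}$ forces $T$ to embed into the split torus $\mathbf{D}_n$, so $\gal(\overline{k}/k)$ acts trivially on $X^\ast(\mathbf{D}_n)$ and hence on its quotient $X^\ast(T)$, making $T$ split over $k$.

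It then remains to splice the filtrations. Pulling a chain of $k$-subtori of $T$ back through $G \twoheadrightarrow T$ gives normal $k$-subgroups of $G$ between $U$ and $G$ with $\mathbf{G}_m$-quotients (normality is automatic since $T$ is abelian), and below $U$ one uses a filtration of $U$ that is characteristic in $U$ --- hence $T$-stable --- refined $T$-equivariantly by using that the split torus $T$ acts linearly on each abelian unipotent subquotient through a $k$-rational decomposition into weight spaces, inside which every line is $T$-stable. Together these produce a $k$-composition series for $G$ with $\mathbf{G}_a$ and $\mathbf{G}_m$ quotients. I expect the main obstacle to be the two structural black boxes: the $k$-rational decomposition $G = T \ltimes U$ over a perfect field and, above all, the $k$-splitness of smooth connected unipotent groups over perfect fields --- this is precisely where perfectness of $k$ is used, and over an imperfect field the reverse implication genuinely fails (there are wound forms of $\mathbf{G}_a$ sitting inside $U^3$). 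Both facts are established in \cite{Borel}, so in practice I would cite them and assemble the filtrations as above.
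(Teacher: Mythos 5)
The paper does not actually give a proof of this statement: it is presented purely as a cited result from Theorem~15.4(iii) of \cite{Borel}, and the body of the paper moves on without further justification. So your proposal is a reconstruction of Borel's argument rather than something to compare against a proof in the paper itself. With that caveat, your sketch of the substantive direction (trigonalizable $\Rightarrow$ $k$-split) is essentially right and correctly isolates where perfectness enters: the $k$-rationality of the decomposition $G = T \ltimes U$, the splitness of $T$ via the $k$-injection $T \hookrightarrow \mathbf{B}_n/U^n \cong \mathbf{D}_n$ and the induced Galois-equivariant surjection on character lattices, and above all the $k$-splitness of smooth connected unipotent groups over a perfect field. Your remark that the reverse implication genuinely fails over imperfect fields (wound forms of $\mathbf{G}_a$) is the correct warning.

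The easy direction ($k$-split $\Rightarrow$ trigonalizable) as you have written it is looser than it should be, and I would not accept ``absorbs the extension class into block upper-triangular form'' as an argument. The extension $1 \to G_1 \to G \to \mathbf{G}_a \to 1$ need not split, and even when it does, a faithful trigonalizable representation of $G_1$ does not automatically extend to one of $G = G_1 \rtimes \mathbf{G}_a$, because the $\mathbf{G}_a$-action on $G_1$ need not be implemented by conjugation inside the ambient $\mathbf{B}_m$. The clean route (and what underlies Borel's treatment) is to fix any faithful $k$-representation $G \hookrightarrow \mathbf{GL}(V)$ once and for all, and then prove that $G$ stabilizes a $k$-rational complete flag in $V$; equivalently, that a $k$-split connected solvable group has a $k$-rational fixed point on any complete $k$-variety possessing a $k$-point. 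That in turn goes by induction along the composition series, using that $\mathbf{G}_a$ and $\mathbf{G}_m$ each have $k$-rational fixed points on such varieties (for $\mathbf{G}_m$ via limits along a $k$-cocharacter, for $\mathbf{G}_a$ via completeness of orbit closures). You are right that perfectness plays no role here, but the inductive step should pass through this fixed-point argument rather than through adjoining blocks to a previously chosen trigonalization.
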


It is easily seen that this class of solvable groups with ``split tori" satisfies our cohomology condition. We show this with the following proposition.

\begin{prop}[Hilbert's Theorem 90 for split solvable groups] Let $\G$ be a solvable group defined over $\FF_q$, then $H^1(K, \G) = \{1\}$ for all $K$ containing $\FF_q$.
\end{prop}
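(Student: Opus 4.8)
The plan is to peel $\G$ apart into its unipotent radical and a split torus and then invoke Hilbert's Theorem~90 for each piece through the exact sequence of (nonabelian) Galois cohomology in pointed sets. The two facts I will use are: for any field $K$, the classical vanishing $H^1(K,\mathbf{G}_m) = \{1\}$, and hence $H^1(K,\mathbf{G}_m^r) = \{1\}$ for a split torus; and, by Proposition~\ref{prop:H90-Uni}, $H^1(K,U) = \{1\}$ for every unipotent group $U$ defined over $\FF_q$ and every $K \supset \FF_q$.

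First I would note that, since $\FF_q$ is perfect, Theorem~\ref{Solvable1} gives (over $\FF_q$) a decomposition $\G = U \rtimes T$, where $U$ is the unipotent radical of $\G$ and $T$ is a maximal torus defined over $\FF_q$; because $\G$ is split --- equivalently trigonalizable, by Theorem~\ref{Solvable2} --- the torus $T$ is $\FF_q$-split, so $T \cong \mathbf{G}_m^r$. Then, for any field $K \supset \FF_q$, the short exact sequence of algebraic $\FF_q$-groups $1 \to U \to \G \to T \to 1$ induces an exact sequence of pointed sets
\[
H^1(K,U) \longrightarrow H^1(K,\G) \longrightarrow H^1(K,T).
\]
The outer terms are trivial --- the left by Proposition~\ref{prop:H90-Uni}, the right by Hilbert~90 for the split torus $T$ --- and since the first map carries base point to base point, exactness forces every class in $H^1(K,\G)$ to be the base point; that is, $H^1(K,\G) = \{1\}$.

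I do not expect a genuine obstacle here. The only points that need care are that nonabelian $H^1$ furnishes only an exact sequence of \emph{pointed} sets (which is all that is used, and which relies on $U$ being normal in $\G$, true since $U$ is the unipotent radical), and that the hypothesis that $\G$ is \emph{split} is essential: for a non-split torus over $\FF_q$, such as the norm-one torus of $\FF_{q^2}/\FF_q$, the statement fails, so the ``split tori'' condition cannot be dropped. As an alternative to the $U \rtimes T$ decomposition, one could run the same dévissage along the full composition series $\G = \G_0 \supset \G_1 \supset \cdots \supset \G_n = \{e\}$ provided by splitness, using at each step that $\G_i/\G_{i+1}$ is $\mathbf{G}_a$ or $\mathbf{G}_m$ together with $H^1(K,\mathbf{G}_a) = \{1\}$ (the normal basis theorem) and $H^1(K,\mathbf{G}_m) = \{1\}$.
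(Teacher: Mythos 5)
Your proof is essentially identical to the paper's: both obtain $\G = UT$ with $U$ the unipotent radical and $T$ a split torus, then use the induced cohomology sequence $H^1(K,U)\to H^1(K,\G)\to H^1(K,T)$ together with triviality of the outer terms. You are a bit more careful in flagging that this is only an exact sequence of pointed sets (and correctly noting that this suffices, and that splitness of $T$ is essential), but the argument is the same.
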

\begin{proof}
By Theorems \ref{Solvable1} and \ref{Solvable2}, we have $G=TU$, where $T$ is a split torus and $U$ unipotent group normal in $G$.
Note that we have the following exact sequence
$$\{1\} \to U \to \G \to T \to \{1\}$$
which induces the cohomology sequence
$$\cdots \to H^1(K,U) \to H^1(K,\G) \to H^1(K,T) \to \cdots$$
Since $U$ has trivial cohomology by Proposition~\ref{prop:H90-Uni}, and $H^1(K,T) \cong H^1(K, (\G_m)^n)$ is trivial as well, we have that $H^1(K, \G)$ is trivial. 
\end{proof}

As with unipotent groups, since the conditions needed to invoke Corollary~\ref{cor:Gen-Ext} are satisfied, we have the following.

\begin{thm} Let $K \supset \FF_q$ be a field, $\G$ a split solvable group with maximal torus, and $\lambda^\ast$ the ring homomorphism induced by the Lang-Steinberg map, as in Theorem \ref{thm:Gen-Ext}. The ring extension $\FF_q[\G]/\im(\lambda^\ast)$ is $\GFq$-generic. 
\end{thm}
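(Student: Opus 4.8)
The plan is to derive the theorem as an instance of Corollary~\ref{cor:Gen-Ext}: I will verify its two hypotheses for $\G$ and then invoke the remark following that corollary (the case $g = 1$) to conclude that $S/R = \FF_q[\G]/\im(\lambda^\ast)$ is $\GFq$-generic. Hypothesis~(2) of Corollary~\ref{cor:Gen-Ext}, namely that $H^1(L,\G) = \{1\}$ for every field $L \supseteq \FF_q$, is exactly the proposition proved immediately above (Hilbert's Theorem~90 for split solvable groups), so nothing further is required there; the real content lies in hypothesis~(1).

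To verify hypothesis~(1) I must exhibit $R = \im(\lambda^\ast)$ as a localized polynomial ring, and for this I would appeal to the structure theory of connected solvable groups. By Theorem~\ref{Solvable1}, $\G = T\,U$ with $U = R_u(\G)$ its unipotent radical and $T$ a maximal torus; since $\FF_q$ is perfect this is in fact a semidirect product $U \rtimes T$ whose multiplication morphism $U \times T \to \G$ is an isomorphism of $\FF_q$-varieties \cite{Borel}. Because $\G$ is $\FF_q$-split, $T$ is a split torus, so $\FF_q[T] = \FF_q[y_1^{\pm 1},\dots,y_b^{\pm 1}]$; and $U$, being a connected unipotent group over a perfect field, is isomorphic as a variety to affine space, so $\FF_q[U] = \FF_q[x_1,\dots,x_a]$ is a genuine polynomial ring, exactly as in the unipotent case treated above. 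Hence
\[ \FF_q[\G] \;\cong\; \FF_q[U]\otimes_{\FF_q}\FF_q[T] \;=\; \FF_q\bigl[x_1,\dots,x_a,y_1,\dots,y_b\bigr]\!\left[\tfrac{1}{y_1\cdots y_b}\right], \]
a localized polynomial ring. Since $\lambda$ is an epimorphism of varieties by Lang--Steinberg, $\lambda^\ast$ is injective and $R \cong \FF_q[\G]$, so $R$ is a localized polynomial ring as well; thus hypothesis~(1) holds (with $g = 1$), and Corollary~\ref{cor:Gen-Ext} yields that $\FF_q[\G]/\im(\lambda^\ast)$ is $\GFq$-generic.

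The only step drawing on input beyond the results already available in the excerpt is the identification of $\FF_q[\G]$ as a localized polynomial ring, which in turn rests on two standard facts about linear algebraic groups over a perfect field: the variety-level splitting $\G \cong U \times T$ and the realization of a split unipotent group as affine space. I expect this to be the one real (and modest) obstacle --- essentially a matter of pinning down the right statements in \cite{Borel} --- while the rest of the argument is formal, reducing entirely to Corollary~\ref{cor:Gen-Ext} together with the preceding proposition.
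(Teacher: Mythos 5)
Your proposal is correct and follows essentially the same route as the paper: both reduce the theorem to Corollary~\ref{cor:Gen-Ext} (with $g=1$), use the preceding Hilbert~90 proposition for hypothesis~(2), and verify hypothesis~(1) by appealing to the variety-level decomposition $\G \cong \mathbf{G}_a^a \times \mathbf{G}_m^b$ from Borel. Your version merely unpacks that citation a bit further (through $U \rtimes T$ and the identification of a split connected unipotent group with affine space), but the content and the reference are the same.
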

\begin{proof}
It suffices to show that $\FF_q[\mathbf{G}]$ is a localized polynomial ring. This follows from the fact that $\mathbf{G} \cong \mathbf{G}_a \times \cdots \times \mathbf{G}_a \times \mathbf{G}_m \times \cdots \times \mathbf{G}_m$ as $\FF_q$-varieties \cite{Borel}. 
\end{proof}

\subsection{Split semisimple algebraic groups}

Let $\G$ be a connected linear algebraic group defined over $\F_q$ and   assume that $H^1(L,\G) = 1$ for all fields $L \supset \F_q$.  
Assume further that $\G$ is semisimple and split. We want to produce a $\G(\F_q)$-generic extension over $\F_q$. \par

We will employ methods related to the \textit{Bruhat decomposition} \cite{Humphreys}. Let $B$ be a split Borel subgroup of $\G$ (defined over $\FF_q$) and let $U$ be the corresponding unipotent part.  Let $U^-$ be the opposite unipotent part.  

\begin{prop}
\label{bd}
Define a map 
\begin{gather*}
\pi:U^- \times B \to \G\\
(u,b) \mapsto ub.
\end{gather*}
Then $\pi$ is injective and its image is the non-vanishing set of a function $g \in \F_q[\G]$.  This image is often called the big cell in $\G$.
\end{prop}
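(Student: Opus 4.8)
The plan is to deduce Proposition~\ref{bd} from the standard structure theory of split semisimple groups, specifically the (refined) Bruhat decomposition. Recall that for a split semisimple group $\G$ with split Borel $B = T U$ and opposite unipotent $U^-$, the multiplication map $U^- \times T \times U \to \G$ is an isomorphism of varieties onto an open subvariety $\Omega \subset \G$, the big cell; equivalently the map $U^- \times B \to \G$ given by $(u,b) \mapsto ub$ is an open immersion, since $B = TU \cong T \times U$ as varieties. The first thing I would do is cite this fact (e.g. from \cite{Humphreys} or \cite{Borel}): the big cell $\Omega$ is precisely the cell indexed by the longest Weyl element in the Bruhat decomposition $\G = \bigsqcup_{w \in W} B w B$, and it is the unique cell of top dimension, hence dense and open in $\G$ (which is irreducible).

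Next I would establish injectivity of $\pi$. This is immediate from the uniqueness in the decomposition: if $u_1 b_1 = u_2 b_2$ with $u_i \in U^-$ and $b_i \in B$, then $u_2^{-1} u_1 = b_2 b_1^{-1} \in U^- \cap B = \{1\}$, because $U^-$ and $B$ meet trivially (the opposite unipotent intersects the Borel only in the identity). Hence $u_1 = u_2$ and $b_1 = b_2$, so $\pi$ is injective. The image is $\Omega$, an open subset of the irreducible variety $\G$, so its complement $\G \setminus \Omega$ is Zariski closed.

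The remaining point — and the one requiring a little care — is that $\G \setminus \Omega$ is not merely closed but is the zero locus of a \emph{single} regular function $g \in \F_q[\G]$, and moreover that one can take this $g$ defined over $\F_q$. For this I would argue as follows: since $\G$ is semisimple, its coordinate ring $\F_q[\G]$ has trivial units (only scalars) and $\G$ has trivial Picard group... actually more robustly, one knows $\G$ is a smooth affine variety and the big cell is the principal open set associated to a specific matrix coefficient. Concretely, realizing $\G \hookrightarrow \gln{n}$ via a representation whose highest weight is regular dominant (or using the determinant of a suitable minor built from the action on a lowest-weight line), the condition ``$X$ lies in the big cell'' is exactly the non-vanishing of an explicit matrix coefficient $g(X) = \langle \ell^-, X \cdot v^+\rangle$ where $v^+$ is a highest weight vector and $\ell^-$ the corresponding lowest weight covector; this $g$ is a regular function on $\G$ defined over $\F_q$ since $B$, $U^-$, and the Chevalley basis data are all $\F_q$-rational. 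Its non-vanishing set is stable under right multiplication by $U$ and left multiplication by $U^-$ and captures precisely $\Omega$. I expect this last step — pinning down the explicit $g$ and verifying $\{g \neq 0\} = \Omega$ exactly rather than just up to a closed set of smaller dimension — to be the main obstacle; in the write-up I would likely handle it by quoting the precise form of the big cell from \cite{Humphreys} and observing that the Chevalley construction makes everything $\F_q$-rational, rather than rederiving the representation-theoretic formula from scratch.
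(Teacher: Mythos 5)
Your proposal takes essentially the same approach as the paper, which simply cites Chapter~28 of Humphreys' \emph{Linear Algebraic Groups} for the Bruhat decomposition and the big cell; you have filled in the standard details (uniqueness via $U^- \cap B = \{1\}$, openness from the top Bruhat cell, and the matrix-coefficient realization of $g$) but ultimately defer to the same reference for the principal-open-set claim. One small caution: the parenthetical about $\G$ having trivial Picard group is not true for non-simply-connected semisimple groups, so it is good that you abandoned that line in favor of the explicit highest-weight matrix coefficient, which is the argument Humphreys actually gives.
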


\begin{proof} See Ch. 28 in \textit{Linear Algebraic Groups} by Humphreys \cite{Humphreys}.
\end{proof}

Let $\mathcal O$ be the big cell in $\G$ and assume it is the non-vanishing set of $g\in \F_q[\G]$.   We know that $\F_q[U] = \F_q[t_1,\hdots,t_m]$ is a polynomial ring over $K$.   We also know that $B = T\times U$ where $T$ is a maximal torus in $\G$.  Moreover, 
\[
\F_q[T] = K[r_1,\hdots,r_k,(r_1\cdots r_k)\inv]
\]
since $\G$ is split.  Now $\pi:U^- \times B \to \mathcal O$ is an isomorphism and it follows that
\[
\F_q[\mathcal O] = \F_q[t_1,\hdots,t_m,s_1,\hdots,s_m,r_1,\hdots,r_k,(r_1\cdots r_k)\inv]
\]
is a localized polynomial ring.  Since $\mathcal O$ is the non-vanishing set of $g\in \F_q[\G]$, we have that $\F_q[\G][1/g] = \F_q[\mathcal O]$ and thus $\F_q[\G][1/g]$ is also a localized polynomial ring.
An application of Corollary \ref{cor:Gen-Ext} now yields the following.

\begin{thm}
Let $\G$ be a connected linear algebraic subgroup defined over $\F_q$.  Assume that $\G$ is semisimple and split and that $H^1(L,\G) = 1$ for every field $L \supset \F_q$.  Let $\mathcal O$ be the big cell in $\G$ and assume that $\mathcal O$ is the non-vanishing set of a function $g\in \F_q[\G]$.  Then the ring extension
\begin{center}
\begin{tikzpicture}
\node (1) at (0,0) {$S = \F_q[\G][1/\lambda^*g]$};
\node (2) at (0,-1.5) {$R = \F_q[\G][1/g]$};

\draw[right hook->] (2) -> node[right]{$\lambda^*$} (1);
\end{tikzpicture}
\end{center}
obtained from the Lang-Steinberg map is $\G(\F_q)$-generic over $\F_q$.

\end{thm}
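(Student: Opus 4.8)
The plan is to verify directly that the hypotheses of Corollary~\ref{cor:Gen-Ext} are met for $\G$, so that the conclusion follows immediately. There are exactly two conditions to check: first, that $R = \F_q[\G][1/g]$ is a localized polynomial ring, and second, that $H^1(L,\G) = \{1\}$ for all fields $L \supset \F_q$. The second condition is assumed outright in the statement of the theorem, so the entire content of the proof is the first condition.

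For the first condition, I would argue as follows. By Proposition~\ref{bd}, the big cell $\mathcal O$ is precisely the non-vanishing locus of $g \in \F_q[\G]$, so $\F_q[\G][1/g] = \F_q[\mathcal O]$, and by the same proposition $\pi : U^- \times B \to \mathcal O$ is an isomorphism of varieties. It therefore suffices to show $\F_q[U^- \times B]$ is a localized polynomial ring. Now $B = T \times U$ with $T$ a split maximal torus, so $\F_q[B] = \F_q[T] \otimes_{\F_q} \F_q[U]$; since $\G$ is split, $\F_q[T] \cong \F_q[r_1,\dots,r_k,(r_1\cdots r_k)^{-1}]$ is a localized polynomial ring, and $\F_q[U] \cong \F_q[t_1,\dots,t_m]$ is an honest polynomial ring because $U$ is split unipotent (it has a composition series with $\mathbf G_a$ quotients, hence is isomorphic as a variety to affine space). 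Likewise $\F_q[U^-] \cong \F_q[s_1,\dots,s_m]$. Taking tensor products, $\F_q[\mathcal O] \cong \F_q[t_1,\dots,t_m,s_1,\dots,s_m,r_1,\dots,r_k,(r_1\cdots r_k)^{-1}]$, which is a localized polynomial ring. Thus $R = \F_q[\G][1/g] = \F_q[\mathcal O]$ is a localized polynomial ring, as required.

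With both hypotheses of Corollary~\ref{cor:Gen-Ext} verified, that corollary tells us directly that the ring extension $S[1/\lambda^*g]/R[1/g] = \F_q[\G][1/\lambda^*g] \,/\, \F_q[\G][1/g]$ obtained from the Lang-Steinberg map $\lambda$ is $\G(\F_q)$-generic over $\F_q$, which is exactly the conclusion. (One should note that the localization of $S = \F_q[\G]$ appearing in the corollary is by $\lambda^*g$, which matches the notation in the theorem statement since $\lambda^*$ is the induced comorphism; and that $R[1/g]$ in the corollary is the localization of the image $\lambda^*\F_q[\G]$, but because $\lambda^*$ is injective this is naturally identified with $\F_q[\G][1/g]$ sitting inside $S$ via $\lambda^*$.)

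The main obstacle, such as it is, is not really an obstacle but rather a bookkeeping point: one must be careful that the function $g$ used to cut out the big cell genuinely lies in $\F_q[\G]$ (so that it makes sense to localize and to apply $\lambda^*$ to it), and that the identification $\F_q[\G][1/g] = \F_q[\mathcal O]$ is as $\F_q$-algebras — both of which are guaranteed by Proposition~\ref{bd} being stated over $\F_q$. Everything else is a direct citation: the split-unipotent-implies-affine-space fact, the product decomposition $B = T \times U$, and the structure of $\F_q[T]$ for a split torus are all standard (e.g.\ \cite{Humphreys, Borel}), and the genericity machinery is entirely contained in Corollary~\ref{cor:Gen-Ext}. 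So the proof is short: assemble the polynomial-ring structure of the big cell, invoke the assumed vanishing of $H^1$, and apply the corollary.
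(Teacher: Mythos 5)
Your proof is correct and follows essentially the same route as the paper: identify $\F_q[\G][1/g]$ with $\F_q[\mathcal O]$, use the Bruhat decomposition $\mathcal O \cong U^-\times T\times U$ together with $\F_q[U],\F_q[U^-]$ being polynomial rings and $\F_q[T]$ a localized polynomial ring for a split torus to conclude $R$ is a localized polynomial ring, and then invoke Corollary~\ref{cor:Gen-Ext} with the assumed vanishing of $H^1$. The bookkeeping remark at the end about $g$ living in the domain of $\lambda^*$ and the identification of $R$ with $\lambda^*\F_q[\G]$ inside $S$ is a correct reading of the notational conventions of Theorem~\ref{thm:Gen-Ext} and Corollary~\ref{cor:Gen-Ext}.
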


\begin{rem}
These hypotheses are satisfied in particular for $\sln n(\F_q)$ and $\spn {2n}(\F_q)$ and thus we have obtained generic extensions for these groups over $\F_q$.  
\end{rem}

In principle, it is possible to produce generic polynomials for both $\sln{n}(\F_q)$ and $\spn{2n}(\F_q)$ over $\F_q$. However, since symplectic groups are generally of large order, the generic polynomials produced using our methods are cumbersome to write down. Instead, for the remainder of this subsection we will focus on generic polynomials for $\sln{n}(\F_q)$ over $\F_q$. \par

Take $K = \F_q(t_1,\hdots,t_n)$ and consider the polynomial 
\[
g_n(Y) = Y^{q^n} + \sum_{i=1}^{n-1} t_iY^{q^i} + (-1)^n Y \in K[Y].
\]
In papers of Albert-Maier \cite{Albert-Maier} and Elkies \cite{Elkies} it is shown that the Galois group of $g_n(Y)$ over $K$ is $\sln n (\F_q)$. We will first show that at least for $n=2$ these polynomials are  not generic. 

We have the following useful criterion for determining whether a 1-parameter generic polynomial is possible from \cite{Jensen-Ledet-Yui}.
\begin{thm}[Jensen-Ledet-Yui]
Let $K$ be a field and let $G$ be a finite group.  Assume there is a one parameter polynomial that is $G$-generic over $K$.  Then there exists an embedding $G \hookrightarrow \pgln{2}(K)$.
\end{thm}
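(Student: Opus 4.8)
The plan is to build a faithful action of $G$ by $K$-automorphisms on a one-variable rational function field $K(v)$. Since $\mathrm{Aut}_K(K(v)) \cong \pgln{2}(K)$ — the group of M\"obius substitutions $v \mapsto (av+b)/(cv+d)$ — such an action is precisely an embedding $G \hookrightarrow \pgln{2}(K)$, so this will finish the proof. We may assume $G \neq \{1\}$, the statement being trivial otherwise.

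First I would fix any faithful finite-dimensional $K$-representation $V$ of $G$, say the regular representation, and let $G$ act by $K$-automorphisms on the rational function field $W := K(V) = K(x_1,\dots,x_n)$. This action is faithful, so by Artin's theorem $W/F$ is a Galois extension with group $G$, where $F := W^G \supseteq K$. Applying condition (2) of the definition of a $G$-generic polynomial to the $G$-extension $W/F$ produces an element $\xi \in F$ such that $W$ is the splitting field of $f(Y;\xi)$ over $F$.

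The next step is to check that $\xi$ is transcendental over $K$. If it were algebraic, then $f(Y;\xi) \in K[Y]$, so all of its roots would be algebraic over $K$; but these roots lie in $W = K(x_1,\dots,x_n)$, in which $K$ is algebraically closed, so they lie in $K$ and $f(Y;\xi)$ splits completely over $F$. That forces $W = F$, contradicting $[W:F] = |G| \geq 2$. Hence $K(\xi)$ is a one-variable rational function field over $K$, and the isomorphism $K(t) \to K(\xi)$, $t \mapsto \xi$, combined with condition (1) of the definition of genericity, shows that the splitting field $M_\xi$ of $f(Y;\xi)$ over $K(\xi)$ is Galois with $\gal(M_\xi/K(\xi)) \cong G$.

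It remains to see that $M_\xi$ is a purely transcendental extension of $K$. Now $M_\xi$ is a subfield of $W = K(x_1,\dots,x_n)$ containing $K$ of transcendence degree one over $K$, so a suitable form of L\"uroth's theorem gives $M_\xi = K(v)$ for some $v$, after which $G = \gal(M_\xi/K(\xi))$ acts faithfully on $K(v)$ over $K$ and we are done. Concretely, $M_\xi$ is the function field of a smooth projective geometrically connected curve $C$ over $K$; the inclusion $M_\xi \subseteq K(x_1,\dots,x_n)$ makes $C$ unirational over $K$, hence geometrically rational and so of genus $0$, and it also forces $C(K)\neq\emptyset$ — when $K$ is infinite because the associated dominant rational map $\mathbb{A}^n_K \to C$ is defined on a dense open set, which then has $K$-points, and over a finite field because every genus-$0$ curve has a rational point — whence $C \cong \mathbb{P}^1_K$. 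I expect this last, L\"uroth-type step to be the main obstacle: over a field that need not be algebraically closed one cannot simply quote the classical one-variable L\"uroth theorem and must instead run the genus-zero-with-rational-point argument above.
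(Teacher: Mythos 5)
The paper does not actually prove this statement; it is quoted from Jensen--Ledet--Yui and used as a black box to rule out a one-parameter generic polynomial for $\sln 2(\F_q)$ when $q$ is odd. So there is no ``paper's own proof'' to compare against.

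Your argument is correct and is, in substance, the standard proof of this criterion. Specializing the generic polynomial to the Noether extension $K(V)/K(V)^G$, observing that the specialization parameter $\xi$ must be transcendental over $K$, and then identifying the splitting field of $f(Y;\xi)$ over $K(\xi)$ with a rational function field $K(v)$ on which $G$ acts faithfully, yielding $G\hookrightarrow \operatorname{Aut}_K K(v)\cong \pgln 2(K)$, is exactly the intended route. You also correctly flag the one genuinely nontrivial point: the final step is not the classical one-variable L\"uroth theorem (which would require $M_\xi$ to sit inside $K(x)$ for a single $x$), but the fact that a subfield of $K(x_1,\dots,x_n)$ of transcendence degree one, in which $K$ is algebraically closed, is the function field of a genus-zero curve with a $K$-rational point, hence rational. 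Your handling of the rational-point question in both the infinite- and finite-field cases is fine. One small exposition nit: in arguing that $\xi$ is transcendental, the step ``if $\xi$ were algebraic then $f(Y;\xi)\in K[Y]$'' silently uses the same fact you invoke a line later for the roots, namely that $K$ is algebraically closed in $W\supseteq F\ni\xi$, so $\xi$ algebraic over $K$ forces $\xi\in K$; it would be cleaner to say that first.
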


\begin{prop}
Assume $q$ is odd and let $K$ be a field extending $\F_q$.  Then there is no embedding of $\sln 2(\F_q)$ in $\mathbf{PGL}_2(K)$.
\end{prop}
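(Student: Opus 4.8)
The plan is to suppose toward a contradiction that there is an embedding $\iota\colon \sln 2(\F_q) \hookrightarrow \pgln 2(K)$ for some field $K \supset \F_q$ with $q$ odd, and to derive a contradiction by comparing the internal subgroup structure of $\sln 2(\F_q)$ with the known structure of finite subgroups of $\pgln 2(K)$. The first step is to pin down what finite subgroups of $\pgln 2(K)$ can look like. Over an algebraically closed field this is a classical Dickson-type classification: a finite subgroup of $\pgln 2(\overline K)$ is cyclic, dihedral, $A_4$, $S_4$, $A_5$, or (in characteristic $p$) $\mathrm{PSL}_2(\F_{p^k})$, $\mathrm{PGL}_2(\F_{p^k})$, or a semidirect product of an elementary abelian $p$-group with a cyclic group of order prime to $p$. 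So I would first observe that $\iota$ extends to an embedding into $\pgln 2(\overline K)$ and invoke this classification; the image must be one of those groups.

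Next I would use order and structure constraints to rule out all possibilities. Write $|\sln 2(\F_q)| = q(q-1)(q+1)$. Since $q$ is odd, $-I \in \sln 2(\F_q)$ is a central element of order $2$, and $\sln 2(\F_q)$ has a \emph{unique} involution (every element of order $2$ in $\sln 2(\F_q)$ is $-I$, because an order-$2$ matrix of determinant $1$ over a field of odd characteristic is diagonalizable with eigenvalues $\pm 1$, forcing $\pm I$). This is the key leverage: in the Dickson list, the groups $\mathrm{PSL}_2$, $\mathrm{PGL}_2$, dihedral of order $>2$, $A_4$, $S_4$, $A_5$ all contain more than one involution, so they are excluded; a cyclic group or a $p$-group-by-cyclic of the relevant shape cannot contain $\sln 2(\F_q)$ for $q \geq 4$ on order/non-abelian grounds (and the small cases $q = 3$, where $\sln 2(\F_3)$ has order $24$, must be checked by hand — here one uses that $\sln 2(\F_3)$ is not $S_4$ since it has a center of order $2$, and its unique involution again obstructs the remaining candidates). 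The cleanest uniform argument: if $\iota(\sln 2(\F_q)) \subset \pgln 2(\overline K)$, then its unique involution $\iota(-I)$ is central in the image; but in each non-cyclic group on the Dickson list the centralizer of any involution is proper, so the image would have to be abelian, contradicting non-abelianness of $\sln 2(\F_q)$ for $q \geq 3$.

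The main obstacle I anticipate is handling the characteristic-$p$ subgroups $\mathrm{PSL}_2(\F_{p^k})$ and $\mathrm{PGL}_2(\F_{p^k})$ and the affine groups $\F_{p^k} \rtimes C_d$ carefully, since these are exactly the ``large'' subgroups and one cannot dismiss them on order grounds alone when $\F_q \subseteq \F_{p^k}$. For these I would argue via the involution count again: $\mathrm{PSL}_2(\F_{p^k})$ and $\mathrm{PGL}_2(\F_{p^k})$ have many involutions (all conjugate, forming a large class), so they cannot contain a subgroup whose center is generated by an involution that is the \emph{only} involution of the subgroup — a central involution of $\sln 2(\F_q)$ would have to be central in any overgroup inside $\pgln 2$, but these groups have trivial center. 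The affine groups have a normal $p$-Sylow with abelian complement, so any non-abelian subgroup of exponent-structure like $\sln 2(\F_q)$ (which has elements of order dividing $q+1$, coprime to $p$, that do not commute with the $p$-part) cannot embed. Assembling these pieces — extend to $\overline K$, apply Dickson, then kill each case using ``$-I$ is the unique involution and is central'' versus ``these groups have no nontrivial center / have several involutions'' — completes the argument.
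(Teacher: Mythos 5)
Your proposal is correct in outline but takes a genuinely different and heavier route than the paper. You lean on Dickson's classification of finite subgroups of $\pgln 2(\overline K)$ and then rule out cases using the fact that $-I$ is the unique involution of $\sln 2(\F_q)$ for $q$ odd. The paper instead argues directly and much more economically: after reducing to $K$ algebraically closed, it sets $[A] = \varphi(-I)$, normalizes $A$ so $A^2 = I$ with $A \neq \pm I$, observes that $A$ is then a regular semisimple matrix conjugate to $\mathrm{diag}(1,-1)$, and notes that the centralizer of a regular element in $\pgln 2(K)$ is $(K[A])^\ast/K^\ast$, hence abelian. Since $-I$ is central in $\sln 2(\F_q)$, the whole image $\varphi(\sln 2(\F_q))$ lands in this abelian centralizer, contradicting non-commutativity. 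This sidesteps the classification entirely and handles all odd $q$ (including $q=3$) uniformly. Your approach is valid in spirit but costs you the full Dickson theorem and a case analysis that is genuinely fiddly to make watertight — for example, your proposed ``cleanest uniform argument'' (that every non-cyclic Dickson group has no central involution, so the image would be abelian) is actually false for dihedral groups $D_{2n}$ with $n$ even, which do have a central involution; you then need to fall back on the involution-count or Sylow-structure argument to exclude those, plus separate treatment of the affine groups $\F_{p^k}\rtimes C_d$. The key observation both proofs share — that $-I$ is the unique, central involution and this is hard to accommodate inside $\pgln 2$ — is the same, but the paper exploits it via the regular-element centralizer computation rather than via a classification theorem.
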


\begin{proof}
There is no loss of generality in assuming that $K$ is algebraically closed.  Suppose for contradiction that there is an embedding $\varphi:\sln 2(\F_q) \hookrightarrow \mathbf{PGL}_2(K)$ and consider the element $[A] = \varphi(-I) \in \operatorname{PGL}_2(K)$.  We have $[A]^2 = [I]$ and hence $A^2 = \lambda I$.  
Replacing $A$ by $A/\sqrt \lambda$ if necessary, we may assume that $A^2 = I$.  Now we must have $A \neq \pm I$ since $\varphi$ is injective.  Hence the only possibility is that
\[
A \sim \begin{pmatrix} 1 & 0\\ 0 & -1\end{pmatrix}.
\]
In this case $A$ is regular and so $Z_{\mathbf{PGL}_2(K)}([A]) \cong (K[A])^*/K^*$.  In particular, the centralizer of $[A]$ is abelian.  Since every matrix in $\sln n(\F_q)$ commutes with $-I$,  it follows that $\varphi(\sln n(\F_q)) \subset Z_{\mathbf{PGL}_2(K)}([A])$.   But this implies that $\sln n(\F_q)$ is abelian, contradiction.
\end{proof}

\begin{cor}
The polynomial $g_2(Y)$ is not generic for $\sln 2(\F_q)$ over $\F_q$ when $q$ is odd. 
\end{cor}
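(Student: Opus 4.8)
The plan is to combine the two results that immediately precede the statement. First I would record the two features of $g_2$ that make it a candidate for an $\sln 2(\F_q)$-generic polynomial over $\F_q$. Setting $n=2$ in the definition, $g_2(Y) = Y^{q^2} + t_1 Y^q + Y$, so the only indeterminate occurring is $t_1$; thus $g_2$ is a \emph{one-parameter} polynomial over $\F_q$. It is monic of degree $q^2$, and since $g_2'(Y) = 1$ it is separable. By the results of Albert--Maier and Elkies quoted above, $\gal(g_2/\F_q(t_1)) \cong \sln 2(\F_q)$, so condition (1) of the definition of a $\sln 2(\F_q)$-generic polynomial is met and it makes sense to ask whether $g_2$ is generic.

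Next I would argue by contradiction. Suppose $g_2$ were $\sln 2(\F_q)$-generic over $\F_q$. Then $g_2$ is a one-parameter polynomial that is $G$-generic over the field $K = \F_q$ for the group $G = \sln 2(\F_q)$, so the Jensen--Ledet--Yui theorem applies verbatim and produces an embedding $\sln 2(\F_q) \hookrightarrow \pgln 2(\F_q)$.

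Finally I would invoke the preceding Proposition: since $q$ is odd and $\F_q \supset \F_q$, there is no embedding of $\sln 2(\F_q)$ into $\pgln 2(\F_q)$. This contradiction shows $g_2(Y)$ is not generic for $\sln 2(\F_q)$ over $\F_q$. Honestly, there is no real obstacle here: both nontrivial inputs (the computation of $\gal(g_2/\F_q(t_1))$ and the nonexistence of the $\pgln 2$-embedding) have already been established, and the only thing to verify is the bookkeeping that $g_2$ has exactly one parameter — which is immediate from its definition — so the corollary is a direct assembly of the two preceding results.
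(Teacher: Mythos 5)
Your argument is correct and is exactly the intended deduction: the corollary is a direct consequence of the Jensen--Ledet--Yui criterion applied to the one-parameter polynomial $g_2$ (whose Galois group is $\sln 2(\F_q)$ by Albert--Maier/Elkies), combined with the preceding proposition ruling out an embedding $\sln 2(\F_q)\hookrightarrow\pgln{2}(\F_q)$ for odd $q$. The paper leaves the proof implicit precisely because it is this straightforward assembly, and your bookkeeping of the single parameter $t_1$ and the observation that $\F_q$ itself is among the fields covered by the proposition are both correct.
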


Let $K$ be a field extending $\F_q$ and 
let $(M,\Phi)$ be an $n$-dimensional Frobenius module over $K$.  Fix a basis for $M$ over $K$ and let $A$ be the matrix for $\Phi$ with respect to this basis.   Let $\G$ be a closed connected algebraic group defined over $\F_q$.  

\begin{defn}
Let $G_1,G_2\in \mathbf G(\F_q)$.  We say that $G_1$ and $G_2$ form a pair of strong generators if for every $U_1,U_2\in \gln n(\F_q)$ the elements $U_1G_1U_1\inv$ and $U_2G_2U_2\inv$ generate $\GFq$.
\end{defn}

The following result regarding the existence strong generators for $\sln{n}$ is due to \cite{Albert-Maier}. 
\begin{thm}[Albert-Maier]
\label{sg}
There exists a pair of strong generators $G_1,G_2$ for $\sln n(\F_q)$.  Moreover, we can take $G_1,G_2$ to be regular, meaning that their characteristic polynomials equal their minimal polynomials.
\end{thm}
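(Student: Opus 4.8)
The plan is to reduce the statement to a concrete pair of companion-type matrices whose conjugacy class is large enough to force the generated group to be all of $\sln n(\F_q)$, and then to verify regularity by computing characteristic and minimal polynomials directly. First I would recall that a regular element of $\sln n(\F_q)$ is conjugate over $\overline{\F_q}$ to a companion matrix; so a natural candidate for each $G_i$ is the companion matrix of a carefully chosen degree-$n$ polynomial with constant term $(-1)^n$ (to land in $\sln n$). For such matrices the characteristic polynomial and minimal polynomial coincide automatically, which settles the ``moreover'' clause. The substantive claim is then: there exist two such polynomials $f_1, f_2$ so that for \emph{every} choice of conjugators $U_1, U_2 \in \gln n(\F_q)$, the group $\langle U_1 C_{f_1} U_1^{-1}, U_2 C_{f_2} U_2^{-1}\rangle$ equals $\sln n(\F_q)$.

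The key steps, in order, would be: (1) normalize the problem by conjugating so that $U_1 = I$, reducing to showing that $C_{f_1}$ together with an arbitrary conjugate of $C_{f_2}$ generates the full group; (2) choose $f_1$ so that $C_{f_1}$ is a \emph{generator of a nonsplit torus} (i.e. $f_1$ irreducible of degree $n$ over $\F_q$), so that the cyclic group it generates is a Singer-type torus acting irreducibly on $\F_q^n$; (3) choose $f_2$ so that $C_{f_2}$ has a spectrum pattern — for instance a transvection-like or small-support element, or more robustly an element whose eigenvalue multiset differs in a controlled way — guaranteeing that the subgroup $H$ generated cannot be contained in any proper subgroup of $\sln n(\F_q)$; (4) invoke the classification of subgroups of $\sln n(\F_q)$ (Aschbacher's theorem, or for small $n$ the explicit subgroup lattice) to rule out each maximal class: reducible subgroups are excluded by irreducibility of $C_{f_1}$, imprimitive and field-extension subgroups are excluded because $C_{f_1}$ generates a torus that is not normalized by a conjugate of $C_{f_2}$ for generic $U_2$, classical subgroups are excluded by the determinant/order of $C_{f_1}$, and the almost-simple and extraspecial-normalizer cases are excluded by order considerations once $n$ or $q$ is large, with finitely many small cases checked by hand; (5) conclude $H = \sln n(\F_q)$. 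Throughout, the crucial feature being exploited is that conjugating $C_{f_2}$ by an arbitrary $U_2$ does not shrink the generated group below $\sln n$ — the irreducibility coming from $f_1$ ``rigidifies'' the situation.

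The main obstacle I expect is step (4): making the subgroup-exclusion argument \emph{uniform in $U_2$}, i.e. showing that no matter how the second generator is conjugated, the pair escapes every proper subgroup. The difficulty is that a clever choice of $U_2$ could conceivably place both conjugates inside a common parabolic or inside the normalizer of a torus; ruling this out requires knowing that the conjugacy class of $C_{f_2}$ is not contained in the union of (a bounded-index family of) proper subgroups once we have fixed $C_{f_1}$, which is essentially a statement that the class of $C_{f_2}$ meets every coset in a controlled way. I anticipate that the cleanest route is to pick $f_2$ so that $C_{f_2}$ is also regular and its characteristic polynomial has a distinct factorization type from $f_1$ (e.g. $f_1$ irreducible, $f_2$ splitting completely with distinct roots), and then argue by a counting/genericity argument that the two conjugates generate an irreducible, primitive, non-classical subgroup, after which Aschbacher forces it to be $\sln n(\F_q)$; the low-rank exceptional cases $n \le 4$ (and small $q$) would be dispatched by direct computation. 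I would also expect to lean on the fact, established already in the cited Albert–Maier work, that the Galois group of $g_n(Y)$ over $\F_q(t_1,\dots,t_n)$ is $\sln n(\F_q)$, which can be used as an alternative input: specializing parameters and applying the Lower Bound Theorem gives that conjugates of the relevant companion matrices generate the whole group, which is precisely the strong-generation statement.
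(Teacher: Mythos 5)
The paper does not actually prove this statement---it is an imported result, stated with a bare citation to Albert--Maier, and the paper then \emph{uses} it (together with the Lower Bound Theorem) to establish that the displayed additive polynomial $f(Y)$ has Galois group $\sln n(\F_q)$ over $\F_q(s,t_1,\dots,t_{n-1})$. So there is no internal proof to compare against; the relevant question is whether your sketch is logically sound on its own terms.

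Your main route---take $G_1, G_2$ to be companion matrices of degree-$n$ polynomials with constant term $(-1)^n$, with $f_1$ irreducible so that $\langle G_1\rangle$ is an irreducible (Singer-type) cyclic torus, normalize $U_1 = I$, and then rule out proper overgroups via the subgroup structure of $\sln n(\F_q)$---is in the right spirit, and you are right that regularity of a companion matrix is immediate. But the gap you flag in step (4) is genuine and is exactly where the work lies: you need that \emph{every} $\F_q$-conjugate of $G_2$, paired with $G_1$, escapes every maximal subgroup of $\sln n(\F_q)$, and the Aschbacher machinery is not plug-and-play here. The cleaner technical input is the classification of subgroups of $\gln n(q)$ containing a Singer cycle (Kantor's theorem), which reduces the problem to showing no conjugate of $G_2$ lands inside the normalizer of the Singer torus through $G_1$ or one of finitely many sporadic overgroups; your sketch gestures at this but does not pin down the choice of $f_2$ that makes the exclusion go through uniformly, nor does it treat the small $n$ and small $q$ exceptions concretely. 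As written the argument is a program, not a proof.

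The ``alternative input'' you suggest at the end---deducing strong generation from the known fact that $\gal(g_n/\F_q(t_1,\dots,t_n)) = \sln n(\F_q)$ via the Lower Bound Theorem---is circular and should be dropped. The Lower Bound Theorem only produces, for each specialization $\overline A$ of the parametrized matrix, an element of $\im(\rho)$ that is \emph{conjugate} to $\overline A$; to conclude $\im(\rho)$ is all of $\sln n(\F_q)$ one must already know that some collection of specializations, after arbitrary independent conjugation, still generates the whole group. That is precisely the strong-generation condition. Both in this paper (Section 5.3) and in Albert--Maier the logical order is: strong generators first, then the Galois group computation. Reversing it assumes what you are trying to prove.
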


We will also make use of a slightly modified version of the Lower Bound Theorem, also due to \cite{Albert-Maier}.
\begin{thm}
\label{thm:Lower-Bd-AM}
Take $K = \F_q(t_1,\hdots,t_n)$.  Let $\mathfrak m = (t_1-a_1,\hdots,t_n-a_n)$ be a maximal ideal in $\F_q[t_1,\hdots,t_n]$ and define $R = \F_q[t_1,\hdots,t_n]_{\mathfrak m}$.  Let $\overline{A}$ be the matrix obtained from $A$ via the specialization $t_i \mapsto a_i$. If  $A$ belongs to $\gln n(R)$ then the image of $\rho$ contains an element which is conjugate to $\overline{A}$ in $\mathcal  G(\F_q)$.
\end{thm}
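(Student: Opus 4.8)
The plan is to produce one distinguished element of $\im(\rho)$ and to recognize it, up to conjugacy, as $\overline A$, by reducing a Lang--Steinberg element modulo $\mathfrak m$. The key preliminary observation is that $A$ actually lies in $\G(R)$: it is a $K$-point of the closed $\F_q$-subscheme $\G\subseteq\gln n$ whose matrix entries lie in the subring $R\subseteq K$ and whose determinant is a unit of $R$, so the ($\F_q$-rational) equations cutting out $\G$, which vanish on these entries in $K$, already vanish on them in $R$. In particular $\overline A\in\G(\F_q)$.

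Next I would use the geometry of the Lang--Steinberg map. Since $\lambda\colon\G\to\G$ has differential the identity at $e$ (Frobenius having vanishing differential), it is a finite étale $\G(\F_q)$-torsor for the right-translation action. Pulling $\lambda$ back along $A\colon\operatorname{Spec}R\to\G$ yields a finite étale $\G(\F_q)$-torsor over $\operatorname{Spec}R$; let $\tilde R$ be the connected component whose fraction field is the splitting field $E$ (such a component exists because a lift of $A$ along $\lambda$ is by construction defined over $E$). Then $\tilde R$ is finite étale over $R$, it equals the integral closure of $R$ in $E$, and it carries a tautological $\mathcal U\in\G(\tilde R)$ with $\lambda(\mathcal U)=A$ and $K(\mathcal U)=E$. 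Because the choice of Lang--Steinberg element changes $\rho$ only by conjugation by an element of $\G(\F_q)$ (two such elements differ by right translation by $\G(\F_q)$) and the statement to be proved is invariant under conjugating $\rho$, we may take $U=\mathcal U$.

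Now I would reduce modulo $\mathfrak m$. Fix a maximal ideal $\mathfrak M\subset\tilde R$ over $\mathfrak mR$, with residue field $k$, a finite extension of $\F_q$; since $\tilde R/R$ is étale there is no ramification and the decomposition group $D_{\mathfrak M}\le\gal(E/K)$ maps isomorphically onto $\gal(k/\F_q)$. Let $\sigma_0\in D_{\mathfrak M}$ lift the $q$-power Frobenius of $k$. From $\lambda(\mathcal U)=A$ and $\sigma_0(A)=A$ one gets $\sigma_0(\mathcal U)=\mathcal U\,\rho(\sigma_0)$ with $\rho(\sigma_0)\in\G(\F_q)$; reducing this modulo $\mathfrak M$, writing $\overline{\mathcal U}=\mathcal U\bmod\mathfrak M\in\G(k)$, and using that $\sigma_0$ induces the $q$-power Frobenius on $k$ together with $\lambda(\overline{\mathcal U})=\overline A$, one finds $\rho(\sigma_0)=\overline{\mathcal U}^{-1}\overline A^{-1}\overline{\mathcal U}$ in $\G(k)$. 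Hence $\rho(\sigma_0)^{-1}=\overline{\mathcal U}^{-1}\overline A\,\overline{\mathcal U}\in\im(\rho)$ lies in the $\G(\overline{\F_q})$-conjugacy class of $\overline A$.

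The remaining, and hardest, step is to upgrade $\G(\overline{\F_q})$-conjugacy to $\G(\F_q)$-conjugacy. The scheme $C=\{x\in\G:\ x\,\rho(\sigma_0)^{-1}x^{-1}=\overline A\}$ is a torsor under the centralizer $Z=Z_\G(\overline A)$, it is defined over $\F_q$, and $\overline{\mathcal U}\in C(\overline{\F_q})$; a direct computation identifies its class in $H^1(\F_q,Z)$ with the cocycle sending the Frobenius to $\overline A^{-1}\in Z(\F_q)$. Since Lang's theorem gives $H^1(\F_q,Z^{\circ})=\{1\}$, this class is trivial and $C(\F_q)\neq\emptyset$ as soon as $Z_\G(\overline A)$ is connected, and then $\rho(\sigma_0)^{-1}$ is genuinely $\G(\F_q)$-conjugate to $\overline A$. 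The connectedness of $Z_\G(\overline A)$ is the crux; it holds whenever $\overline A$ is regular in a reductive group, in particular for $\G=\sln n$ or $\spn{2n}$ with $\overline A$ conjugate to one of the regular strong generators of Theorem~\ref{sg}, which is the only setting in which we invoke this theorem. (Without that input, the first three steps still yield the weaker conclusion of Theorem~\ref{thm:Lower-Bd}, that $\im(\rho)$ meets the $\G(\overline{\F_q})$-conjugacy class of $\overline A$.)
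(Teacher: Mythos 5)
The paper gives no proof of this statement --- it is simply attributed to Albert--Maier --- so your argument is a genuine reconstruction rather than a comparison target. That said, your strategy (pull the Lang--Steinberg torsor back to $\operatorname{Spec} R$, take the connected component $\operatorname{Spec}\tilde R$ with generic fiber $E$, reduce modulo a maximal ideal $\mathfrak M$ above $\mathfrak m$, and read off a Frobenius in the decomposition group) is the natural one and the computation is correct: since $\tilde R/R$ is finite \'etale, $D_{\mathfrak M}\to\gal(k/\F_q)$ is an isomorphism, and combining $\sigma_0(\mathcal U)=\mathcal U\rho(\sigma_0)$ with $\overline{\mathcal U}^{(q)}=\overline A^{-1}\overline{\mathcal U}$ does give $\rho(\sigma_0)^{-1}=\overline{\mathcal U}^{-1}\overline A\,\overline{\mathcal U}\in\G(k)$, hence an element of $\im(\rho)$ geometrically conjugate to $\overline A$. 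The twisting argument in the last step is also right: the conjugating set is a $Z_\G(\overline A)$-torsor defined over $\F_q$, and Lang's theorem kills $H^1$ when the centralizer is connected.

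The one genuine inaccuracy is the assertion that connectedness of $Z_\G(\overline A)$ ``holds whenever $\overline A$ is regular in a reductive group.'' That is false: in $\pgln 2$ (with $q$ odd) the image of $\operatorname{diag}(1,-1)$ is regular semisimple, yet its centralizer is the normalizer of the diagonal torus modulo center, which has two components. What is true, and what you actually need, is Steinberg's connectedness theorem: in a connected reductive group with simply connected derived group, centralizers of semisimple elements are connected. This covers $\sln n$ and $\spn{2n}$ (the cases invoked here), and for $\sln n$ one can also check directly that the centralizer of any regular element $\overline A$ is $\{x\in\F_q[\overline A]^\times:\det x=1\}$, the kernel of a primitive character of a torus-by-unipotent group and hence connected. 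You correctly flag this connectedness as the crux --- and correctly note that without it one still recovers the $\G(\overline{\F_q})$-conjugacy of Theorem~\ref{thm:Lower-Bd} --- but the blanket claim about regular elements in reductive groups should be replaced by the sharper hypothesis (simply connected, or connected centralizer assumed outright).
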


We aim to obtain polynomials which are generic for $\sln n(\F_q)$ over $\F_q$.  Take $K = \F_q(s,t_1,\hdots,t_{n-1})$ and define
\[
A = \begin{pmatrix}
& & & & (-1)^{n+1}s\inv\\
s & & & & -t_1\\
& 1 & & & -t_2\\
& & \ddots & & \vdots \\
& & & 1 & -t_{n-1}
\end{pmatrix} \in \sln n(K).
\]
Let $(M,\Phi) := (K^n,\Phi_A)$ be the Frobenius module determined by $A$.  By Theorem \ref{thm:Upper-Bd} there is a faithful representation $\rho:\gal(M,\Phi) \to \sln n(\F_q)$.

We want to show that this representation is surjective.  By Theorem \ref{sg} there exists a pair of regular strong generators $G_1,G_2$ for $\sln n(\F_q)$.  Assume
\[
\chi(x) = x^n + a_{n-1}x^{n-1} + \hdots + a_1x + (-1)^{n}
\]
is the characteristic polynomial for $G_1$.
Note that the matrix $\overline A$ obtained from $A$ via the specialization 
$
s  \mapsto 1$,  $t_i  \mapsto a_i
$
also has characteristic polynomial $\chi(x)$.  Since $G_1$ and $\overline A$ are regular, this implies that $B$ is conjugate to $\overline A$ in $\gln n(\F_q)$.  Theorem \ref{thm:Lower-Bd-AM} implies that $\overline A$ is conjugate to an element of the image of $\rho$.  Hence the image of $\rho$ contains an element which is $\gln n(\F_q)$ conjugate to $G_1$.  The same argument shows that the image of $\rho$ contains an element which is $\gln n(\F_q)$ conjugate to $G_2$.  It follows that $\rho$ is surjective.

Now consider the solution space of the Frobenius module $(M,\Phi)$.  It can be determined by solving the equation $AX^{(q)} = X$.  In fact, it is equivalent to solve $A^TX = X^{(q)}$ \cite{Morales-Sanchez}, which yields the equations
\begin{gather*}
x_2 = s\inv x_1^q\\
x_3 = x_2^q\\
\vdots\\
x_n = x_{n-1}^q\\
(-1)^{n+1}s\inv x_1 - t_1x_2 - \hdots - t_{n-1}x_n = x_n^q.
\end{gather*}
Setting $x_1 = Y$ gives
\[
(-1)^{n+1} s\inv Y - t_1s\inv Y^q - t_{2}s^{-q}Y^{q^{2}}- \hdots - t_{n-1}s^{-q^{n-2}}Y^{q^{n-1}} = s^{-q^{n-1}}Y^{q^n}.
\]
It follows that the polynomial
\[
f(Y) = Y^{q^n} + \sum_{i=1}^{n-1} t_is^{(q^{n-1}-q^{i-1})}Y^{q^i} + (-1)^{n} s^{(q^{n-1}-1)}Y 
\]
has Galois group $\sln n(\F_q)$.

We claim that actually $f$ is generic for $\sln n(\F_q)$ over $\F_q$.  Let $L$ be a field extending $\F_q$ and let $E/L$ be a Galois extension with group $\Gamma \cong \sln n(\F_q)$. Note in particular that $L$ must be infinite. Choose an isomorphism $\rho:\Gamma \to \sln n(\F_q) \subset \sln n(E)$.  We know that $H^1(\Gamma, \sln n(E))$ is trivial and thus there is some $U\in \sln n(E)$ such that $\rho(\sigma) = U\inv \sigma(U)$ for all $\sigma\in \Gamma$.  Put $B = U(U\inv)^{(q)} \in \sln n(L)$ and let $(N,\Psi) := (L^n, \Psi_B)$ be the Frobenius module determined by $B$.  We claim that $B$ is Frobenius equivalent to some specialization of $A$ in $L$.  Given this we are done, for Frobenius equivalent matrices determine the same splitting field.  

By Theorem \ref{thm:M-S} it is possible to choose a cyclic basis $\{v,\Psi(v),\hdots,\Psi^{n-1}(v)\}$ for the  module $(N,\Psi)$.  Let $U$ be the matrix $[v,\Psi(v),\hdots,\Psi^{n-1}(v)]$ and set $\lambda = (\det U)\inv$.  Let $V = [\lambda v,\Psi(v),\hdots, \Psi^{n-1}(v)]$ be the matrix obtained from $U$ by scaling the first column by $\lambda$.  Then $V\inv BV^{(q)}$ is the matrix for $\Psi$ with respect to the basis $\{\lambda v,\Psi(v),\hdots,\Psi^{n-1}(v)\}$.
We see that
\[
V\inv BV^{(q)} = \begin{pmatrix}
& & & &*\\
\lambda^q & & & & *\\
& 1 & & & *\\
& & \ddots & & \vdots\\
& & & 1 & *
\end{pmatrix}.
\]
Moreover, since $\det V = \det V^{(q)} = 1$, this matrix has determinant one.  Thus the entry in the upper right hand corner must be $(-1)^{n+1}\lambda^{-q}$.  It follows that this matrix can be obtained by suitably specializing $A$.  We have now proven the following.

\begin{thm}
The polynomial
\[
f(Y) = Y^{q^n} + \sum_{i=1}^{n-1} t_is^{(q^{n-1}-q^{i-1})}Y^{q^i} + (-1)^{n} s^{(q^{n-1}-1)}Y
\]
is generic for $\sln n(\F_q)$ over $\F_q$.
\end{thm}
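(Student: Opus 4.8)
The plan is to verify directly the two defining conditions of a $G$-generic polynomial for $G=\sln n(\F_q)$ over the base $K=\F_q(s,t_1,\dots,t_{n-1})$. For the first condition, $\gal(f/K)\cong\sln n(\F_q)$, I would realize $f$ through the Frobenius module $(K^n,\Phi_A)$ attached to the explicit matrix $A\in\sln n(K)$ displayed above: solving $AX^{(q)}=X$, equivalently $A^{T}X=X^{(q)}$, and eliminating $x_2,\dots,x_n$ in favor of $x_1=Y$ produces exactly the equation $f(Y)=0$, so the splitting field of $f$ over $K$ coincides with that of the module and it suffices to compute the Galois group of the module. The Upper Bound Theorem~\ref{thm:Upper-Bd} supplies an injection $\rho:\gal(K^n,\Phi_A)\hookrightarrow\sln n(\F_q)$; surjectivity is the substantive point. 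Here I would invoke the Albert--Maier pair of \emph{regular} strong generators $G_1,G_2$ for $\sln n(\F_q)$ (Theorem~\ref{sg}): specializing $s\mapsto1$ and $t_i\mapsto a_i$, where $x^n+a_{n-1}x^{n-1}+\dots+a_1x+(-1)^n$ is the characteristic polynomial of $G_1$, yields a specialization $\overline A$ with that same characteristic polynomial; regularity of $G_1$ and $\overline A$ forces $\overline A\sim G_1$ in $\gln n(\F_q)$, and then the refined Lower Bound Theorem~\ref{thm:Lower-Bd-AM} places a $\gln n(\F_q)$-conjugate of $G_1$ in $\im\rho$. Running the identical argument with $G_2$ and using the strong-generator property forces $\im\rho=\sln n(\F_q)$.

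For the genericity condition, let $E/L$ be a Galois extension with group $\Gamma\cong\sln n(\F_q)$ and $L\supset\F_q$; note $L$ is necessarily infinite, since a finite field admits only cyclic Galois extensions. Fix an isomorphism $\rho:\Gamma\to\sln n(\F_q)\subset\sln n(E)$ and regard it as a $1$-cocycle; triviality of $H^1(\Gamma,\sln n(E))$ lets me write $\rho(\sigma)=U\inv\sigma(U)$ for some $U\in\sln n(E)$, whence $B=U(U\inv)^{(q)}\in\sln n(L)$ and $(L^n,\Phi_B)$ has splitting field $E$. It therefore suffices to show $B$ is Frobenius equivalent to a specialization of $A$, since Frobenius-equivalent matrices have the same splitting field and the change of variables relating $A$ to $f$ then exhibits $E$ as the splitting field of the corresponding specialization $f(Y;\xi)$, $\xi\in L^n$. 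Because $L$ is infinite, Theorem~\ref{thm:M-S} provides a cyclic vector $v$ for $(L^n,\Phi_B)$; forming $U'=[\,v,\Phi_B(v),\dots,\Phi_B^{\,n-1}(v)\,]$, rescaling its first column by $\lambda=(\det U')\inv$ to obtain $V$, and computing $V\inv BV^{(q)}$ yields a companion-type matrix with $\lambda^{q}$ in the $(2,1)$ entry, $1$'s along the rest of the subdiagonal, and an arbitrary last column; the constraint $\det V=\det V^{(q)}=1$ pins the upper-right entry to $(-1)^{n+1}\lambda^{-q}$. This is precisely the shape of $A$ under $s\mapsto\lambda^{q}$ with an appropriate choice of the $t_i$, so $B$ is Frobenius equivalent to a specialization of $A$, as required.

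The main obstacle is the surjectivity of $\rho$ in the Galois-group step: a bare specialization/reduction argument does not suffice, and one genuinely needs the two nontrivial Albert--Maier inputs, namely the existence of \emph{regular} strong generators for $\sln n(\F_q)$ and the sharpened Lower Bound Theorem valid over a localization of $\F_q[t_1,\dots,t_n]$. The genericity half is comparatively formal, but there is one delicate point in the cyclic-basis normalization: the cyclic vector must be rescaled so that the resulting normal form has determinant one — that is, actually lies among specializations of $A\in\sln n$, not merely among specializations of a companion matrix in $\gln n$ — and this is where an otherwise routine computation must be carried out carefully.
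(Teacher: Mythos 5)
Your proposal is correct and follows essentially the same route as the paper: realize $f$ via the Frobenius module of the explicit companion-type matrix $A\in\sln n(K)$, get injectivity from the Upper Bound Theorem, get surjectivity by matching characteristic polynomials of specializations against the Albert--Maier regular strong generators and invoking the refined Lower Bound Theorem, and prove genericity by trivializing the $1$-cocycle, passing to a cyclic basis, and rescaling the first column so the resulting normal form has determinant one and hence is a specialization of $A$. The delicate points you flag (the need for regular strong generators plus the localized Lower Bound Theorem, and the $\lambda$-rescaling to stay inside $\sln n$) are precisely the ones the paper addresses.
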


\section{Generic Extensions and Polynomials for Cyclic 2-Groups}
\label{sect:cyclic}
In this section, we explore a particularly attractive consequence of the theory developed in Section \ref{sect:Thm} in showing the existence of $C_{2^m}$-generic extensions and polynomials over all fields of positive characteristics. The case of characteristic $2$ is well-known from the theory of Witt
vectors \cite{Rabinoff}, so we shall concentrate in the case of odd characteristic. To this end, we apply our results in Section \ref{sect:Thm} to certain algebraic tori. 
\subsection{Algebraic tori}
\label{subsect:algtori}
Let $K$ be a field and $K_{\text{sep}}$ be the separable closure of $K$.

\begin{defn}
An algebraic group $T$ defined over $K$ is an \textit{algebraic torus} if 
$$T(K_{\text{sep}}) \cong K_{\text{sep}}^\ast \times \cdots \times K_{\text{sep}}^\ast.$$
\end{defn}

Note that an algebraic torus is a closed connected algebraic subgroup of $\mathbf{GL}_n$, and is abelian. We now consider a class of algebraic tori constructed by a technique more generally known as the Weil restriction of scalars, and show that the construction indeed yields an algebraic torus.

Let $E/K$ be a finite, separable extension with basis $e_1, \ldots, e_n$. For $z \in E$ define $m_z: E \rightarrow E$, $y \mapsto zy$. This map is $K$-linear, so we let $M_z$ be the matrix of $m_z$ in the chosen basis. We then get a monomorphism of $K$-algebras, $R: E \hookrightarrow M_n(K)$, $z \mapsto M_z$, where $n = [E:K]$. Furthermore, if $z = a_1e_1+\cdots+a_ne_n \in E$ then we can explicitly identify the image of $z$ as the matrix

$$
  M_z = \left[ 
    \begin{matrix}
      f_{11}(a_1,\ldots,a_n) & \cdots & f_{1n}(a_1,\ldots,a_n) \\
      \vdots & \ddots & \vdots \\
      f_{n1}(a_1,\ldots,a_n) & \cdots & f_{nn}(a_1,\ldots,a_n)
    \end{matrix}
    \right] \in M_n(K).
$$
where $f_{ij} \in K[x_1,\ldots,x_n]$ is a linear combination of the $a_k$.

We then consider the image $E^\ast$ inside of $M_n(K)$, which is a subgroup of $\mathbf{GL}_n(K)$ and defined by polynomial conditions in $K$. Thus, there is an underlying algebraic subgroup $T \subset \mathbf{GL}_n$ defined over $K$ with $T(K) \cong E^\ast$ and $T(K_{\text{sep}}) \cong (K_{\text{sep}} \otimes_K E)^\ast$. So $T$ is an algebraic torus and is given by

$$
  T = \left\{\left[ 
    \begin{matrix}
      f_{11} & \cdots & f_{1n} \\
      \vdots & \ddots & \vdots \\
      f_{n1} & \cdots & f_{nn}
    \end{matrix}
    \right]: \det \neq 0 \right\} \subset \mathbf{GL}_n.
$$

We denote $T$ as $\mathrm{Res}_{E/K} \mathbf{G}_m$. 

We now verify the conditions needed to invoke the appropriate results from Section \ref{sect:Thm}. 

\begin{thm}
\label{thm:cyclic-cohom} 
Let $T = \mathrm{Res}_{E/K} \mathbf{G}_m$ as above. Then for all Galois extensions $M/L$ with $L \supset K$, the cohomology group $H^1(\gal(M/L), T(M))$ is trivial.
\end{thm}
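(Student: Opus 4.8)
The plan is to reduce the statement to the classical Hilbert Theorem 90 for $\mathbf{G}_m$ via Shapiro's lemma (dimension shifting / the cohomology of an induced module). Recall that $T = \mathrm{Res}_{E/K}\mathbf{G}_m$ has the defining property that for any $K$-algebra $A$ one has $T(A) \cong (A \otimes_K E)^\ast$, and this isomorphism is functorial. First I would fix a Galois extension $M/L$ with $L \supset K$ and set $\Gamma = \gal(M/L)$. Then $T(M) \cong (M \otimes_K E)^\ast$ as $\Gamma$-modules, where $\Gamma$ acts only on the $M$-factor. So it suffices to compute $H^1(\Gamma, (M\otimes_K E)^\ast)$.

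The key step is to recognize $(M \otimes_K E)^\ast$ as (the unit group of) an induced, or co-induced, module, so that Shapiro's lemma applies. Write $E = K(\alpha)$ (possible since $E/K$ is separable, in the intended application $E/K$ is even Galois) and let $F$ be a Galois closure of $EM/L$ inside a separable closure, with $\Delta = \gal(F/L)$ and $\Sigma = \gal(F/M)$, so $\Gamma = \Delta/\Sigma$. Using that $E/K$ is separable, $M\otimes_K E \cong \prod_{i} M_i$ is a product of fields, each a copy of a compositum $ME$; concretely $M \otimes_K E \cong \bigoplus_{\sigma} M$ where $\sigma$ ranges over the embeddings of $E$ into a separable closure fixing $K$, and $\Gamma$ permutes these summands in the natural way. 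Thus $(M\otimes_K E)^\ast \cong \prod_\sigma M^\ast$ with $\Gamma$ permuting factors, which is exactly the co-induced module $\mathrm{Coind}_{\Gamma'}^{\Gamma}(M^\ast)$ for $\Gamma'$ the stabilizer of one summand (equivalently $\Gamma' = \gal(M/L')$ for the appropriate intermediate field $L'$ with $L' \otimes_K E$ containing a diagonal copy). Shapiro's lemma then gives
\[
H^1(\Gamma, (M\otimes_K E)^\ast) \cong H^1(\Gamma', M^\ast),
\]
and the right-hand side is trivial by Hilbert's Theorem 90 applied to the Galois extension $M/L'$. Hence $H^1(\gal(M/L), T(M)) = \{1\}$.

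The main obstacle — really the only subtle point — is bookkeeping the $\Gamma$-action on $M \otimes_K E$ carefully and identifying precisely which subgroup $\Gamma'$ appears, since $M$ need not contain $E$ and the number of field factors of $M \otimes_K E$ depends on how $M$ and $E$ overlap. The cleanest way to handle this uniformly is to avoid choosing $L'$ by hand: instead observe that $A \mapsto (A \otimes_K E)^\ast$ is the functor of points of $\mathrm{Res}_{E/K}\mathbf{G}_m$ and invoke the general fact that for a finite separable $E/K$ and any Galois $M/L$ with $L \supset K$, flat descent gives $H^1(\Gamma, \mathrm{Res}_{E/K}(\mathbf{G}_m)(M)) \cong H^1(\gal(M \otimes_L$-$\text{algebra split}), \mathbf{G}_m) = 0$; more elementarily, one notes $M \otimes_K E = M \otimes_L (L\otimes_K E)$ and $L \otimes_K E$ is an étale $L$-algebra, so $M \otimes_K E$ is an étale $M$-algebra that is a product of finite separable field extensions of $M$... no, of $L$ — the point is each factor $M_i/L$ is Galois with group a subgroup of $\Gamma$, the $\Gamma$-set of factors is transitive, and Shapiro plus Hilbert 90 on each orbit finishes it. I would present the argument in this último, self-contained form, spelling out the permutation action on $\{\sigma : E \hookrightarrow K_{\mathrm{sep}}, \sigma|_K = \mathrm{id}\}$ restricted through $\Gamma$ and citing Hilbert 90 for $\mathbf{G}_m$ as the input.
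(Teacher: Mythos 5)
The paper gives no proof at all for this statement---it simply cites \cite{Knus-Merkurjev-Rost-Tignol}, Lemma~29.6, which is the standard result that quasi-trivial tori have trivial $H^1$. Your argument via Shapiro's lemma and Hilbert~90 is in essence the proof that lies behind that citation, so in spirit you have reproduced the ``right'' argument rather than found a genuinely different route.

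That said, there is one imprecision worth fixing. Write $E = K(\alpha)$ with minimal polynomial $g \in K[x]$; then $M\otimes_K E \cong M[x]/(g) \cong \prod_i M_i$ where $g = \prod_i g_i$ is the factorization into distinct irreducibles over $M$ and $M_i = M[x]/(g_i)$. Each $M_i$ is a finite separable extension of $M$, not of $L$, and $M_i/L$ need not be Galois. Your sentence ``each factor $M_i/L$ is Galois with group a subgroup of $\Gamma$'' is therefore not literally correct. The clean statement is the following: $\Gamma = \gal(M/L)$ permutes the factors $g_i$ (since it fixes $g$), and if $\Gamma_i$ is the stabilizer of $g_i$ then $\Gamma_i$ acts faithfully on $M_i$ (faithfulness is inherited from the action on $M\subset M_i$), so by Artin's theorem $M_i/M_i^{\Gamma_i}$ is Galois with group $\Gamma_i$. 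Shapiro's lemma on each $\Gamma$-orbit reduces $H^1(\Gamma, (M\otimes_K E)^\ast)$ to $\prod_i H^1(\Gamma_i, M_i^\ast)$, and each of these vanishes by Hilbert~90 applied to $M_i/M_i^{\Gamma_i}$. With this correction the argument is complete; the special case you mention first (where $M$ splits $E/K$, so each $M_i \cong M$ and $\Gamma_i$ is simply a point stabilizer for the permutation action on embeddings) does not hold in general and should not be the default picture.
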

\begin{proof}
See \cite{Knus-Merkurjev-Rost-Tignol} Lemma 29.6.
\end{proof}

We are now in place to apply the results from Section~\ref{sect:Thm} to obtain generic extensions for cyclic groups of order $2^m$.

\begin{thm}
\label{thm:cyclic-gen-ext2}
Let $p$ be an odd prime and let $n \in \NN$. Then there exists a generic extension for $C_{p^n-1}$ in characteristic $p$.
\end{thm}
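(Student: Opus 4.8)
The plan is to realize $C_{p^n - 1}$ as a quotient of the group of $\F_q$-points of a Weil restriction torus, and then apply Corollary~\ref{cor:Gen-Ext} together with Theorem~\ref{thm:dir-factor}. Set $q = p$ and let $E = \F_{p^n}$, so that $E/\F_p$ is a degree-$n$ separable (indeed Galois) extension. Form the torus $T = \mathrm{Res}_{E/\F_p}\mathbf{G}_m$, a closed connected algebraic subgroup of $\gln n$ defined over $\F_p$, with $T(\F_p) \cong E^* = \F_{p^n}^*$, which is cyclic of order $p^n - 1$. The norm map (or simply the group structure) shows $\F_{p^n}^* \cong C_{p^n-1}$, so it suffices to produce a generic extension for $T(\F_p)$ over $\F_p$.

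First I would check the hypotheses of Corollary~\ref{cor:Gen-Ext}. The cohomology condition $H^1(L, T) = \{1\}$ for all fields $L \supset \F_p$ follows from Theorem~\ref{thm:cyclic-cohom} (this is the split-torus-after-base-change phenomenon: $\mathrm{Res}_{E/\F_p}\mathbf{G}_m$ becomes a product of $\mathbf{G}_m$'s over $E$, and $H^1$ of a split torus is trivial by Hilbert 90). Next I need $\F_p[T][1/g]$ to be a localized polynomial ring for some nonzero $g$. Here $T$ sits inside the affine space $\mathbb{A}^n$ of matrices $M_z$ for $z \in E$ (coordinates $a_1, \dots, a_n$ relative to a fixed $\F_p$-basis of $E$), and $T$ is exactly the non-vanishing locus of $g = \det(M_z)$, which is the field norm $N_{E/\F_p}$ expressed as a polynomial in $a_1, \dots, a_n$. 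Thus $\F_p[T] = \F_p[a_1, \dots, a_n][1/N_{E/\F_p}]$, a localized polynomial ring, so we may take $g = N_{E/\F_p}$ (and in fact this $g$ already lies in $R = \lambda^* \F_p[T]$ up to the standard identification, or we localize as in the corollary). Corollary~\ref{cor:Gen-Ext} then yields a $T(\F_p)$-generic extension $S[1/\lambda^* g]/R[1/g]$ over $\F_p$.

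Finally, since $T(\F_p) \cong C_{p^n-1}$ is abelian, any generic $T(\F_p)$-extension is in particular a generic extension for the trivial semidirect product $C_{p^n-1} = \{1\} \rtimes C_{p^n-1}$, or more to the point we simply observe that a generic $G$-extension for $G = C_{p^n-1}$ is exactly what we want; no invariant-taking is needed here since $T(\F_p)$ is already the target group. (If one prefers to phrase everything through Theorem~\ref{thm:dir-factor}, take $N = \{1\}$.) This establishes the existence of a generic $C_{p^n-1}$-extension in characteristic $p$.

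The main obstacle, and the step deserving the most care, is verifying that $\F_p[T]$ is genuinely a localized polynomial ring and that the localizing element $g$ can be chosen to lie in the base ring $R = \im(\lambda^*)$ — or, failing that, correctly invoking the localized form of Corollary~\ref{cor:Gen-Ext} with $g \in R$ as required by its hypothesis (1). The identification of $T$ as the principal open set $\{\det \neq 0\}$ inside affine matrix space makes the polynomial-ring claim transparent, but one must be attentive that the corollary demands $g \in R$, not merely $g \in S$; since $\det(M_z) = N_{E/\F_p}(z)$ and the norm is Galois-invariant, pulling back under $\lambda$ presents no difficulty, but this compatibility is the one point that should be spelled out rather than asserted.
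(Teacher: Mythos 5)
Your proposal is correct and follows essentially the same route as the paper: form $T = \mathrm{Res}_{\F_{p^n}/\F_p}\mathbf{G}_m$, verify $H^1(L,T)=\{1\}$ via Theorem~\ref{thm:cyclic-cohom}, note $\F_p[T]$ is a localized polynomial ring (the principal open $\{\det \neq 0\}$ in $\mathbb{A}^n$), and invoke Corollary~\ref{cor:Gen-Ext}. Your worry about $g$ lying in $R$ is already handled by the remark following Corollary~\ref{cor:Gen-Ext}: since $\F_p[T]$ (and hence $R\cong\F_p[T]$ via $\lambda^\ast$) is itself a localized polynomial ring, one takes $g=1$, and the appeal to Theorem~\ref{thm:dir-factor} with $N=\{1\}$ is unnecessary.
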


\begin{proof}
Let $T = \mathrm{Res}_{\FF_{p^n}/\FF_p} \mathbf{G}_m$. Then by Theorem~\ref{thm:cyclic-cohom}, $H^1(K, T) = \{1\}$ for all $K \supset \FF_p$.  We also have that $\FF_p[T]$ is a localized polynomial ring. We see this by taking $U \in T(\FF_p(x_1,\ldots,x_n))$ to be the matrix obtained from substituting $x_1,\ldots,x_n$ into the $f_{ij}$ in the matrix from Section~\ref{subsect:algtori}, i.e.\ the matrix of the most general form for $T$. Then $\FF_p[T] = \FF_p[U] = \FF_p[x_1,\ldots,x_n,\frac{1}{\det(U)}]$. So by Corollary~\ref{cor:Gen-Ext}, there exists a generic extension for $T(\FF_p) \cong \FF_{p^n}^\ast \cong C_{p^n-1}$.
\end{proof}

In fact, we can describe this generic extension more explicitly by identifying $\im(\lambda^\ast) \subset \FF_p[U]$. The map $\lambda^\ast: \FF_p[U] \to \FF_p[U]$ is determined by the images of $x_1,\ldots,x_n$, or equivalently the entries of $U = (u_{ij})$. We have that $u_{ij} \mapsto (U(U^{-1})^{(q)})_{ij}$. Letting $a_{ij} = \lambda^\ast(u_{ij})$, we have $A = \lambda(U)$ and $\im(\lambda^\ast) = \FF_p[A]$. Furthermore, notice that if $\alpha \in \FF_{p^n}$ is a primitive element and we choose $\{1, \alpha, \ldots, \alpha^{n-1}\}$ as the basis for $\FF_{p^n}/\FF_p$, then by the construction of $T$ in Section~\ref{subsect:algtori}, the first column of $U$ is $[x_1 \ldots x_n]^T$. So $\lambda^\ast(x_i) = (U(U^{-1})^{(q)})_{i1}$ describes $\im(\lambda^\ast)$ and $A$ can be obtained by substituting $\lambda^\ast(x_i)$ for $x_i$ in $U$. Summing up, the extension $\FF_p[U]/\FF_p[A]$ is the $C_{p^n-1}$-generic extension obtained from the above theorem.

\begin{cor}
\label{cor:cyclic-gen-ext}
Let $m \in \NN$ and $p$ an odd prime such that $2^m \mid p^n-1$ and $2^{m+1} \nmid p^n-1$ for some $n \in \NN$. Then there exists a generic extension for $C_{2^m}$ in characteristic $p$. Moreover, this extension is given by $\FF_p[A, U^d]/\FF_p[A]$.
\end{cor}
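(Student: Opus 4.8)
The plan is to apply Theorem~\ref{thm:dir-factor} to the generic $C_{p^n-1}$-extension $\F_p[U]/\F_p[A]$ produced in Theorem~\ref{thm:cyclic-gen-ext2} (as made explicit in the discussion after that theorem). Write $T = \mathrm{Res}_{\F_{p^n}/\F_p}\mathbf{G}_m$, so $T(\F_p)\cong \F_{p^n}^*\cong C_{p^n-1}$. By hypothesis we have the factorization $p^n - 1 = 2^m\cdot d$ with $d$ odd, so the cyclic group $C_{p^n-1}$ decomposes as an internal direct product $C_{2^m}\times C_d$, and in particular $C_{p^n-1}\cong C_d\rtimes C_{2^m}$ (the action being trivial, since everything is abelian). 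Let $N\cong C_d$ be the unique subgroup of order $d$ inside $C_{p^n-1}=T(\F_p)$; concretely $N$ is the image of the $d$-th power map, equivalently the subgroup $\{x\in\F_{p^n}^* : x^{2^m}=1\}^{\perp}$—better, $N = \{x^{2^m} : x\in \F_{p^n}^*\}$, the $2^m$-torsion-free part. Then Theorem~\ref{thm:dir-factor} applied to the generic $(N\rtimes C_{2^m})$-extension $S/R$ with $S = \F_p[U]$, $R = \F_p[A]$ shows that $S^N/R = \F_p[U]^N/\F_p[A]$ is a generic $C_{2^m}$-extension over $\F_p$.

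The remaining task is to identify the invariant ring $S^N = \F_p[U]^N$ with $\F_p[A, U^d]$, matching the statement. Here $U^d$ denotes the $d$-th matrix power of the "generic element" $U\in T(\F_p(x_1,\dots,x_n))$; since $T$ is abelian and $N$ acts on $S=\F_p[T]$ by right translation through the subgroup $N\subset T(\F_p)$, the element $U^d$ is $N$-invariant precisely because raising to the $d$-th power kills $N$ (every element of $N\subset T(\F_p)\cong \F_{p^n}^*$ has order dividing $d$, so translating $U$ by $\eta\in N$ and then taking the $d$-th power gives $(U\eta)^d = U^d\eta^d = U^d$). Thus $\F_p[A, U^d]\subseteq S^N$. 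For the reverse inclusion one notes that $S = \F_p[U]$ is a free $R$-module of rank $|T(\F_p)| = 2^m d$ on which $T(\F_p)$ acts, and the $N$-fixed subring is a free module over $R$ of rank $[T(\F_p):N] = 2^m$; since $\F_p[A,U^d]$ is generated over $R=\F_p[A]$ by the powers of the single element $U^d$, whose order modulo the $N$-action corresponds to an element of order $2^m$ in the quotient $T(\F_p)/N$, a rank/degree count forces $\F_p[A,U^d] = S^N$. Equivalently, one can argue via the identification $S = R\otimes_{\F_p}\F_p[T(\F_p)]$-type structure coming from the fact that $S/R$ is Galois with group $T(\F_p)$, under which $S^N$ corresponds to the subring generated by $R$ and the characters trivial on $N$, and $U^d$ visibly generates those.

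The main obstacle I expect is the clean identification $S^N = \F_p[A, U^d]$: while it is intuitively clear that $U^d$ is the right $N$-invariant generator, making the reverse inclusion rigorous requires either a careful dimension count using the Galois-ring structure (that $S$ is a free $R$-module of rank $|T(\F_p)|$ and $S^N$ is free of rank $[T(\F_p):N]$, facts which follow from $S/R$ being $T(\F_p)$-Galois) or an explicit description of $\F_p[T]^N$ via the restriction-of-scalars coordinates. There is also a minor point to check—that $N$ is genuinely a direct factor, i.e.\ $C_{2^m}$ is realized as a complementary subgroup—but this is immediate from $\gcd(2^m, d) = 1$ together with the structure theorem for finite cyclic groups. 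Everything else (the hypotheses of Corollary~\ref{cor:Gen-Ext}, namely $H^1(L,T)=1$ from Theorem~\ref{thm:cyclic-cohom} and $\F_p[T]$ being a localized polynomial ring) has already been verified in the proof of Theorem~\ref{thm:cyclic-gen-ext2}, so no new input on the algebraic-group side is needed.
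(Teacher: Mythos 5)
Your high-level strategy matches the paper exactly: factor $C_{p^n-1}\cong C_{2^m}\times C_d$ with $d=(p^n-1)/2^m$ odd, apply Theorem~\ref{thm:dir-factor} to the $C_{p^n-1}$-generic extension $\F_p[U]/\F_p[A]$ from Theorem~\ref{thm:cyclic-gen-ext2}, and reduce to proving the ring identity $\F_p[U]^{C_d}=\F_p[A,U^d]$. Your argument for the inclusion $\F_p[A,U^d]\subseteq \F_p[U]^{C_d}$ (namely $(X\eta)^d=X^d\eta^d=X^d$ for $\eta\in C_d$ using commutativity of $T$) is correct, though the paper does not even bother to isolate it. What is missing is the reverse inclusion, and you acknowledge as much.

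The paper closes this gap by a clean morphism-of-varieties argument rather than a rank count. Define $r:T\to T\times T$ by $U\mapsto(\lambda(U),U^d)$. Since $T$ is abelian both $\lambda$ and $(\cdot)^d$ are homomorphisms, so $\ker(r)=\ker(\lambda)\cap\ker((\cdot)^d)=T(\F_p)\cap T[d]=T(\F_p)[d]\cong C_d$. Thus $r$ factors as $T\stackrel{j}{\twoheadrightarrow}T/C_d\stackrel{i}{\hookrightarrow}T\times T$ with $j$ the quotient and $i$ a closed immersion. Dualizing, $i^*:\F_p[T\times T]\twoheadrightarrow\F_p[T/C_d]$ is surjective and $j^*:\F_p[T/C_d]\hookrightarrow\F_p[T]$ is injective with image exactly $\F_p[T]^{C_d}$. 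But $r^*=j^*\circ i^*$ sends the generic coordinates $X,Y$ of $\F_p[T\times T]$ to $A$ and $U^d$, so $\im(r^*)=\F_p[A,U^d]$. Chasing images gives $\F_p[A,U^d]=\im(r^*)=j^*(\F_p[T/C_d])=\F_p[T]^{C_d}$, proving both inclusions at once.

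Your proposed rank-count route has two concrete problems you should be aware of. First, $U^d$ is an $n\times n$ matrix, so $\F_p[A,U^d]$ is generated over $R=\F_p[A]$ by the $n^2$ entries of $U^d$, not by ``powers of a single element''; the degree bookkeeping you sketch does not apply. Second, even granting that $\F_p[A,U^d]$ were $R$-free of rank $2^m$, an inclusion of free $R$-modules of equal finite rank need not be an equality (e.g.\ $2\ZZ\subset\ZZ$), so the count alone is not conclusive; one would need to pass to the fraction field and then argue integral closure or flatness, at which point one has essentially reconstructed the paper's analysis of the map $r$. Your alternative suggestion---an explicit description of $\F_p[T]^{C_d}$ via the restriction-of-scalars coordinates---is closer in spirit to what the paper actually does.

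Finally, one slip to flag: you first describe $N\cong C_d$ as ``the image of the $d$-th power map.'' The image of $d$-th powers in $\F_{p^n}^*$ has order $(p^n-1)/d=2^m$, so that would give the wrong subgroup; your correction to $N=\{x^{2^m}:x\in\F_{p^n}^*\}$ (equivalently, the $d$-torsion) is the right description and is the one the paper uses implicitly when it computes $\ker(r)=T(\F_p)[d]$.
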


\begin{proof}
Let $d = \frac{p^n-1}{2^m}$, then we can write $C_{p^n-1} \cong C_{2^m} \times C_{d}$. Theorem~\ref{thm:dir-factor} asserts that $\FF_p[U]^{C_d}/\FF_p[A]$ is a generic $C_{2^m}$-extension. So it only remains to show that $\FF_p[U]^{C_d} = \FF_p[A, U^d]$.

Define $r: T \to T \times T$ by $U \mapsto (\lambda(U),U^d)$. Then $\ker(r) = \ker(\lambda) \cap \ker(U \mapsto U^d) = T(\FF_p) \cap \ker(U \mapsto U^d) = T(\FF_p)[d] \cong C_d$. From this we get an injection $T/C_d \hookrightarrow T \times T$. We can decompose $r$ into $i \circ j$ as follows:

$$T \stackrel{j}{\twoheadrightarrow} T/C_d \stackrel{i}{\hookrightarrow} T \times T$$ 

So we get the induced maps,

$$\FF_p[T \times T] \stackrel{i^\ast}{\twoheadrightarrow} \FF_p[T/C_d] \stackrel{j^\ast}{\hookrightarrow} \FF_p[T]$$ 

with the property $r^\ast = j^\ast \circ i^\ast$. If we identify $\FF_p[T \times T] = \FF_p[X,Y]$ (where $X,Y$ are matrices of the most general form for $T$) and $\FF_p[T] = \FF_p[U]$, then $r^\ast: \FF_p[X,Y] \rightarrow \FF_p[U]$ is given by $X \mapsto \lambda(U) = A$ and $Y \mapsto U^d$. So $\im(r^\ast) = \FF_p[A, U^d]$. We also have that $\im(i^\ast) = \FF_p[T/C_d] = \FF_p[U]^{C_d}$ and $j^\ast$ is the natural inclusion map into $\FF_p[U]$. Therefore $\FF_p[U]^{C_d} = \im(j^\ast \circ i^\ast) = \im(r^\ast) = \FF_p[A, U^d]$.

\end{proof}

\begin{lem}
\label{lem:primes}
Let $m \in \NN$ and $p \not\equiv 1 \pmod{2^m}$ be odd. Then there exists $n \in \NN$ such that $2^m \mid p^n-1$ and $2^{m+1} \nmid p^n-1$ if and only if $p \not\equiv -1 \pmod{2^m}$. Furthermore, we can always take $n = \mathrm{ord}(p)$ in $(\ZZ/2^m\ZZ)^\ast$.
\end{lem}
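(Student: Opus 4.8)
The plan is to analyze the order of $p$ in the group $(\ZZ/2^m\ZZ)^\ast$ together with the $2$-adic valuation $v_2(p^n-1)$. Set $n_0 = \mathrm{ord}(p)$ in $(\ZZ/2^m\ZZ)^\ast$; by definition $2^m \mid p^{n_0}-1$, and $2^m \mid p^n - 1$ if and only if $n_0 \mid n$. The condition we want, ``$2^m \mid p^n-1$ and $2^{m+1}\nmid p^n-1$'', is exactly $v_2(p^n-1) = m$. So the lemma amounts to: under the standing hypothesis $p\not\equiv 1\pmod{2^m}$ (equivalently $n_0 > 1$), there is some $n$ with $v_2(p^n-1)=m$ if and only if $p\not\equiv -1\pmod{2^m}$, and moreover $n = n_0$ always works in that case.

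First I would recall the standard ``lifting the exponent'' fact for the prime $2$: for odd $p$ and $n\geq 1$, $v_2(p^n-1) = v_2(p-1)$ if $n$ is odd, and $v_2(p^n-1) = v_2(p-1)+v_2(p+1)+v_2(n)-1$ if $n$ is even. Since $p$ is odd, exactly one of $v_2(p-1), v_2(p+1)$ equals $1$, so in the even case this simplifies to $v_2(p^n-1) = \max(v_2(p-1),v_2(p+1)) + v_2(n)$. The key observation is then that $v_2(p^n-1)$, as $n$ ranges over multiples of $n_0$ (the only $n$ for which $v_2(p^n-1)\geq m$ at all), takes the value $m$ for \emph{some} such $n$ if and only if it takes a value $\leq m$ for the smallest such $n$, namely $n_0$ — because increasing $v_2(n)$ only increases $v_2(p^n-1)$ in the even case, and $v_2(p^{n_0}-1) \geq m$ by definition of $n_0$. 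Hence the existence statement reduces to showing $v_2(p^{n_0}-1) = m$, i.e.\ that $n = n_0$ works, which is also the ``furthermore'' claim.

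So the crux is: $v_2(p^{n_0}-1) = m$ if and only if $p\not\equiv -1\pmod{2^m}$. For the forward direction (contrapositive): if $p\equiv -1\pmod{2^m}$ then $n_0 = 2$ (as $p\not\equiv 1$), and $v_2(p^2-1) = v_2(p-1)+v_2(p+1) \geq 1 + m > m$, so the value $m$ is never attained. For the converse, suppose $p\not\equiv -1 \pmod{2^m}$; I must rule out $v_2(p^{n_0}-1) > m$. If $n_0$ is odd, then $v_2(p^{n_0}-1)=v_2(p-1)$, but $v_2(p-1)\geq m$ would force $n_0=1$, contradiction; so $v_2(p-1) < m$, hence $v_2(p^{n_0}-1) < m$, which actually contradicts $n_0$ being the order unless in fact $n_0$ is even — so I should first argue that under our hypotheses $n_0$ is even (if $p\not\equiv 1$ then $v_2(p-1)=1<m$ when $m\geq 2$, forcing the order to be even), and the $m=1$ case should be handled separately or absorbed. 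With $n_0=2^k$ even, $v_2(p^{n_0}-1) = \max(v_2(p-1),v_2(p+1)) + k$; writing $c = \max(v_2(p-1),v_2(p+1))$, minimality of $n_0$ among $\{n : v_2(p^n-1)\geq m\}$ gives $c + (k-1) < m \leq c+k$, i.e.\ $v_2(p^{n_0}-1) = c+k = m$ exactly. The main obstacle is bookkeeping the small cases ($m=1$, and the boundary between $v_2(p-1)$ and $v_2(p+1)$) and making the minimality argument for $n_0$ fully rigorous; the LTE input itself is routine.
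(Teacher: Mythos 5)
Your overall strategy is essentially the paper's: reduce to showing $n=\mathrm{ord}(p)$ works, then control $v_2(p^n-1)$. The lifting-the-exponent identity you cite is exactly the paper's inductive claim $v_2(p^{2^k}-1) = v_2(p^2-1)+k-1$ in disguise (for $n = 2^k$ your formula $v_2(p-1)+v_2(p+1)+v_2(n)-1$ gives precisely that), so the core computation matches; the student proof just outsources the induction to a named lemma.

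Two corrections are needed to make your version close. First, ``$v_2(p-1)=1$'' does not follow from $p \not\equiv 1 \pmod{2^m}$ (take $p=5$, $m=3$: $v_2(p-1)=2$); what you actually have and what suffices to force the order even is $v_2(p-1) < m$. Second, and more substantively, the inequality ``$c+(k-1) < m$ by minimality of $n_0$'' is not justified when $k=1$: minimality of $n_0 = 2$ gives only $v_2(p-1) < m$, not $c = \max(v_2(p-1),v_2(p+1)) < m$, and ruling out $v_2(p+1) \geq m$ requires invoking the hypothesis $p \not\equiv -1 \pmod{2^m}$ directly. This is precisely the step where that hypothesis enters the converse, so it cannot be deferred to ``bookkeeping''; you correctly flag the $v_2(p-1)$ versus $v_2(p+1)$ boundary as the sensitive spot, but as written the chain of inequalities would fail there. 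The paper handles this cleanly by writing $k = \max\{1,\, m+1-v_2(p^2-1)\}$ and observing that the max being attained at $1$ forces $2^{m+1}\mid p^2-1$, i.e.\ $p\equiv\pm1\pmod{2^m}$; you could patch your version analogously by treating $k=1$ separately using both nonresidue hypotheses.
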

\begin{proof}
We begin by showing we can always take $n = \mathrm{ord}(p)$ in $(\ZZ/2^m\ZZ)^\ast$. Let $n = \mathrm{ord}(p)$. Then $p^n \equiv 1 \pmod{2^m}$, so $2^m \mid p^n-1$. If we also have that $2^m \mid p^k-1$, then it must be the case that $k = nl$, with $l \in \NN$. So if $p^n \equiv 1 \pmod{2^{m+1}}$, then $p^{nl} \equiv 1 \pmod{2^{m+1}}$. We can now proceed by assuming $n = \mathrm{ord}(p)$ in $(\ZZ/2^m\ZZ)^\ast$.

Let $a \in \NN$ and define $v_2(a)$ to be the highest power of 2 dividing $a$, i.e. $v_2(a) = l$ where $a = 2^lp_1^{l_1} \ldots p_k^{l_k}$ as a product of primes. We claim that if $p$ is odd, $v_2(p^{2^k}-1) = v_2(p^2-1)+k-1$. We prove this claim by induction on $k$. The result is trivial for $k =1$. Now suppose the formula holds for $k \geq 1$. Then factor $p^{2^k+1}-1 = (p^{2^k}+1)(p^{2^k}-1)$ and observe that $v_2(ab) = v_2(a)+v_2(b)$. Thus $v_2(p^{2^k+1}-1) = v_2(p^{2^k}+1)+v_2(p^{2^k}-1)$. Since $p$ is odd, $p^2 \equiv 1 \pmod{8}$ and so $p^{2^k}+1 \equiv 2 \pmod{8}$. Hence $v_2(p^{2^k}+1) = 1$ and by the inductive hypothesis,
$$v_2(p^{2^k+1}-1) = 1+v_2(p^{2^k}-1) = v_2(p^2-1)+k$$
thus establishing the claim.

We can finish the proof with this fact. Since $(\ZZ/2^m\ZZ)^\ast$ is a 2-group and $p \not\equiv 1 \pmod{2^m}$, we know that $n = 2^k$ for some $k \geq 1$. We also have that $p^n \equiv 1 \pmod{2^m}$ and so $2^m \mid p^n-1$. Hence $m \leq v_2(p^{2^k}-1) = v_2(p^2-1)+k-1$ which implies $k \geq m+1-v_2(p^2-1)$. Moreover, $k$ is the smallest positive integer satisfying this inequality. Thus $k = \max\{1, m+1-v_2(p^2-1)\}$.

If $k \neq m+1-v_2(p^2-1)$, it must be that $m+1-v_2(p^2-1) \leq 0$, so $m+1 \leq v_2(p^2-1)$. This occurs if and only if $2^{m+1} \mid p^2-1 = (p+1)(p-1)$, which happens if and only if $p \equiv \pm 1 \pmod{2^m}$. So if $p \equiv -1 \pmod{2^m}$, then $n = 2$ and $2^{m+1} \mid p^n-1$.

On the other hand, if $p \not\equiv -1 \pmod{2^m}$, then $k = m+1-v_2(p^2-1)$. Suppose $2^{m+1} \mid p^n-1$, then
$$m+1 \leq v_2(p^{2^k}-1) = v_2(p^2-1)+k-1 = m$$
a contradiction.
\end{proof}
\begin{rem}
If $p \equiv 1 \pmod{2^m}$, then $\FF_p$ contains a primitive $2^m$ root of unity. So by Kummer's Theorem, the extension $\FF_p[s, 1/s]/\FF_p[t,1/t]$ with $s^n = t$ is a generic extension for $C_{2^m}$ over $\FF_p$.
\end{rem}

\subsection{Generic Polynomials for Cyclic 2-Groups}
\label{sect:polys}

We first deal with the easy case where $p\equiv \pm 1 \pmod{2^m}$ which does not necessitate the machinery developed in the previous section.

\begin{prop} 
\begin{enumerate}
\item If $p\equiv 1 \pmod{2^m}$, then there is a generic
$C_{2^m}$-generic polynomial over $\F_p$ in one parameter.
\item If
$p\equiv -1 \pmod{2^m}$, then there is a generic $C_{2_m}$-generic
polynomial over $\F_p$ in two parameters.
\end{enumerate}  
\end{prop}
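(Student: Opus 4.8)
The plan is to handle the two cases separately, in each case producing an explicit polynomial and verifying the two conditions in the definition of a generic polynomial. For case (1), when $p \equiv 1 \pmod{2^m}$, the field $\F_p$ contains a primitive $2^m$-th root of unity $\zeta$. Here I would simply invoke Kummer theory: the polynomial $f(t;Y) = Y^{2^m} - t \in \F_p(t)[Y]$ has Galois group $C_{2^m}$ over $\F_p(t)$ (since the relevant roots of unity are present), and any $C_{2^m}$-extension $M/L$ with $L \supset \F_p$ is, again by Kummer theory, of the form $L(\sqrt[2^m]{a})$ for some $a \in L^*$, so specializing $t \mapsto a$ does the job. This is essentially the content of the Remark preceding this Proposition, restated at the level of polynomials rather than ring extensions.

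For case (2), when $p \equiv -1 \pmod{2^m}$, the strategy is to realize $C_{2^m}$ via a rank-one norm torus. Let $E = \F_p(\sqrt{\epsilon})$ be the quadratic extension of $\F_p$ (with $\epsilon$ a nonsquare), so that $\gal(E/\F_p) = \langle \sigma \rangle$ with $\sigma$ the $p$-power Frobenius. The norm-one torus $T = \ker(N_{E/\F_p}: \mathrm{Res}_{E/\F_p}\mathbf G_m \to \mathbf G_m)$ has $T(\F_p) \cong \ker(N: \F_{p^2}^* \to \F_p^*) \cong C_{p+1}$, and since $2^m \mid p+1$ (because $p \equiv -1 \pmod{2^m}$), projecting off the prime-to-$2$ part gives $C_{2^m}$ as a quotient. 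Concretely, an element of $T$ corresponds to $x + y\sqrt\epsilon$ with $x^2 - \epsilon y^2 = 1$, i.e. a point on a conic; parametrizing and taking the appropriate power map yields a two-parameter family. The cohomological input is Hilbert 90 for the quadratic extension, which gives that every $C_{2^m}$-torsor over any $L \supset \F_p$ arises from a specialization; this is exactly the mechanism of Corollary~\ref{cor:Gen-Ext} applied to $T$ together with Theorem~\ref{thm:dir-factor} to split off the $C_d$ factor where $d = (p+1)/2^m$. I would then either quote the generic polynomial directly from the classical literature (Jensen--Ledet--Yui give two-parameter generic polynomials for $C_{2^m}$ over fields containing a primitive $2^{m-1}$-th root of unity, which $\F_p$ does when $p \equiv -1 \pmod{2^m}$), or extract it from the cyclic-algebra/norm-torus description above.

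The main obstacle is case (2): producing an \emph{explicit} two-parameter polynomial and proving genericity directly, rather than merely asserting existence of a generic extension. The cleanest route is probably to reduce to the known Jensen--Ledet--Yui construction by checking that $\F_p$ contains the primitive $2^{m-1}$-th root of unity needed there --- this follows since $p \equiv -1 \pmod{2^m}$ forces $p^2 \equiv 1 \pmod{2^{m+1}}$, hence $p \equiv \pm 1 \pmod{2^{m-1}}$ and in fact the multiplicative order of $p$ mod $2^{m-1}$ divides $2$, so a primitive $2^{m-1}$-th root of unity lies in $\F_{p^2}$; one must check it actually lies in $\F_p$, which holds precisely when $p \equiv \pm 1 \pmod{2^{m-1}}$, and $p \equiv -1 \pmod{2^m}$ implies $p \equiv -1 \pmod{2^{m-1}}$. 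So $\F_p$ does contain $\zeta_{2^{m-1}}$, the hypothesis of the classical construction is met, and the two-parameter generic polynomial over $\F_p$ follows. The remaining bookkeeping --- writing the polynomial and confirming the parameter count is exactly two --- is routine.
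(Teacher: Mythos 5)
Your treatment of part (1) via Kummer theory matches the paper exactly. Part (2), however, contains a concrete error. You claim that $p \equiv -1 \pmod{2^m}$ forces a primitive $2^{m-1}$-th root of unity to lie in $\F_p$, on the grounds that $p \equiv -1 \pmod{2^{m-1}}$. But $\F_p$ contains $\zeta_{2^{m-1}}$ if and only if $2^{m-1} \mid p-1$, i.e.\ $p \equiv +1 \pmod{2^{m-1}}$; the congruence $p \equiv -1$ puts $\zeta_{2^{m-1}}$ in $\F_{p^2}$, not in $\F_p$. Concretely, take $m=3$, $p=7$: here $7 \equiv -1 \pmod 8$, but $\F_7$ has no primitive $4$th root of unity since $-1$ is a nonsquare mod $7$. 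So for every $m \geq 3$ the hypothesis of the Jensen--Ledet--Yui construction you want to quote is \emph{not} met, and that route collapses. Your other sketch for (2), via the norm-one torus $T = \ker(N_{E/\F_p})$, has a separate problem: Corollary~\ref{cor:Gen-Ext} needs $H^1(L,T)=1$ for \emph{all} fields $L \supset \F_p$, and for the norm-one torus $H^1(L,T) \cong L^*/N_{L\otimes E/L}((L\otimes E)^*)$ is genuinely nontrivial (e.g.\ over $L = \F_p(t)$ the element $t$ is not a norm by a degree parity argument). Hilbert 90 gives vanishing only for the full Weil restriction $\mathrm{Res}_{E/\F_p}\mathbf G_m$, whose $\F_p$-points form $C_{p^2-1}$ with $2$-Sylow $C_{2^{m+1}}$ (since $2^{m+1} \Vert p^2-1$ when $p \equiv -1 \pmod{2^m}$), and Theorem~\ref{thm:dir-factor} then hands you a generic extension for $C_{2^{m+1}}$ --- the wrong group, since $C_{2^m}$ is a quotient of $C_{2^{m+1}}$, not a direct factor.

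The paper sidesteps both issues for (2) by a different device: since $2^m \mid p^2-1$, the group $C_{2^m}$ admits a faithful $2$-dimensional linear representation over $\F_p$ (an element of order $2^m$ in $\F_{p^2}^*$, viewed as a $2\times 2$ matrix). It then invokes the rationality of the invariant field $\F_p(V)^{C_{2^m}}$ (cited from p.~23 of Jensen--Ledet--Yui) together with Theorem~3 of Kemper--Mattig, which converts rationality of the invariant field into the existence of a generic polynomial in the same number of parameters. This is a genuinely different mechanism from the torus/Frobenius-module machinery of Section~4, and you should use it here rather than trying to force the general framework or a root-of-unity hypothesis that fails.
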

\begin{proof} (1) Follows directly from Kummer theory since $\FF_p$ contains all the $2^m$ th -roots of unity \cite{Lang}. 

(2) Since $2^m$ divides $p^2-1$, the group $C_{2^m}$ admits a faithful
representation $C_{2^m}\to GL(V)$, where $V$ is a $2$-dimensional
vector space over $\FF_p$. We have an induced action of $G:=C_{2^m}$
on the symmetric algebra $\FF_p[V]$ and on its field of fractions
$\FF_p(V)$. As shown in page 23 of \cite{Jensen-Ledet-Yui}, the
fixed field $\FF_p(V)^G$ is rational over $\FF_p$ and thus $G$ possesses
a generic polynomial in two parameters by Theorem 3 in \cite{Kemper-Mattig}.
\end{proof}

We now exploit the construction of $T = \mathrm{Res}_{\FF_{p^n}/\FF_p} \mathbf{G}_m$ in Section~\ref{subsect:algtori} to obtain a generic polynomial for $C_{2^m}$ in the minimal number of parameters. We do this using an analogous concept of generic Frobenius modules:

\begin{defn}
Let $G$ be a finite group and $A \in \mathbf{GL}_n(K)$ where $K = \FF_q(t_1, \ldots, t_n)$. Then $(K^n, \Phi_A)$ is a \textit{$G$-generic} Frobenius module over $\FF_q$ if:
\begin{enumerate}
\item $\gal(K^n, \Phi_A) \cong G$.
\item If $M/L$ is a $G$-Galois extension of fields with $L \supset \FF_q$, then $M$ is the splitting field of a specialization $A(\xi)$ for some $\xi \in L^n$.
\end{enumerate}
\end{defn}

Let $K = \FF_p(t_1,\ldots,t_n)$ and $A \in T(K)$ by substituting $t_1,\ldots,t_n$ into the $f_{ij}$ in the matrix from Section~\ref{subsect:algtori}.  We make the following proposition:

\begin{prop}
\label{prop:gen-module}
Suppose $m \in \NN$ and $p$ an odd prime such that $2^m \mid p^n-1$ and $2^{m+1} \nmid p^n-1$ for some $n \in \NN$. Let $d = \frac{p^n-1}{2^m}$. Then the Frobenius module $(K^n, \Phi_{A^d})$ is $C_{2^m}$-generic.
\end{prop}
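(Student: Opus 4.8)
The plan is to show directly that $(K^n,\Phi_{A^d})$ satisfies the two conditions in the definition of a $C_{2^m}$-generic Frobenius module, using the generic extension $\FF_p[U]^{C_d}/\FF_p[A]$ produced in Corollary~\ref{cor:cyclic-gen-ext} as the scaffolding. Recall from that corollary that $\FF_p[U]^{C_d} = \FF_p[A,U^d]$, and that the $C_{2^m}$-generic extension is $\FF_p[A,U^d]/\FF_p[A]$. The key observation is that the entries of $U^d$ are precisely the coordinate functions $\lambda^\ast$-pulled-back from the variety $T$ via the map $X \mapsto X^d$; that is, $A^d = \lambda(U)^d$ and, more to the point, the Frobenius module $(K^n,\Phi_{A^d})$ has as its splitting field exactly the field of fractions of $\FF_p[A,U^d]$ localized appropriately, i.e. the compositum corresponding to the generic extension above. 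So the whole proof is really about translating the ring-theoretic statement of Corollary~\ref{cor:cyclic-gen-ext} into the language of Frobenius modules and splitting fields.

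Concretely, first I would verify condition (1): that $\gal(K^n,\Phi_{A^d}) \cong C_{2^m}$ over $K = \FF_p(t_1,\dots,t_n)$. Since $T$ is a torus with $H^1(L,T) = 1$ for all $L \supset \FF_p$ (Theorem~\ref{thm:cyclic-cohom}), by the Lang-Steinberg theorem there is $U_0 \in T(\overline{K})$ with $A = U_0(U_0^{(q)})^{-1}$, hence $A^d = U_0^d (U_0^d{}^{(q)})^{-1}$ — using that $T$ is abelian, so powers and the twisted-inverse operation commute. By the Upper Bound Theorem (Theorem~\ref{thm:Upper-Bd}) the Galois group embeds in $T(\FF_p)[d] = \ker(x \mapsto x^d \text{ on } T(\FF_p)$)? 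No — I need to be careful: $A^d$ being a $d$-th power means the relevant subgroup is $\{t \in T(\FF_p) : t^d = 1\}$... actually the correct statement is that the splitting field of $\Phi_{A^d}$ is fixed by the subgroup $C_d \subset T(\FF_p)$ of $d$-th powers' annihilator, giving quotient $T(\FF_p)/C_d \cong C_{p^n-1}/C_d \cong C_{2^m}$. To pin down that the embedding is onto $C_{2^m}$ and not smaller, I would invoke the Lower Bound Theorem (Theorem~\ref{thm:Lower-Bd} / Theorem~\ref{thm:Lower-Bd-AM}), or more simply observe that the map $A$ is the "most general" element of $T$ and the map $\lambda\circ(\cdot)^d : T \to T$ is dominant with the right kernel, so specializations of $A^d$ fill out a Zariski-dense set of $T(\FF_p)$ modulo $C_d$-conjugacy; since $T$ is abelian, conjugacy is trivial, and one gets surjectivity onto $T(\FF_p)/C_d$.

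For condition (2), let $M/L$ be a $C_{2^m}$-Galois extension with $L \supset \FF_p$. Choose an isomorphism $\rho:\gal(M/L) \to C_{2^m} \cong T(\FF_p)/C_d$. The obstacle here — and I expect this to be the main technical point — is lifting this quotient cocycle to a genuine $T(\FF_p)$-valued cocycle so that $H^1(L,T) = 1$ can be applied and a matrix $B \in T(L)$ with the right splitting field produced. One clean route: enlarge $M/L$ to a $T(\FF_p) \cong C_{p^n-1}$-Galois algebra $\widetilde M/L$ (possible since $C_d$ is a direct factor and one can take $\widetilde M = M \otimes_L L'$ for a cyclic $C_d$-extension $L'/L$, e.g. a Kummer or Artin–Schreier-type extension, which exists because $L$ is infinite and $C_d$ has a generic polynomial — or directly because $C_{p^n-1}$-extensions of any $L \supset \FF_p$ are abundant). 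Then apply Theorem~\ref{thm:Gen-Ext} (or Corollary~\ref{cor:Gen-Ext}) to the torus $T$: there is an $\FF_p$-algebra homomorphism $\varphi:\FF_p[A] \to L$ with $\FF_p[U]\otimes_\varphi L \cong \widetilde M$ as $T(\FF_p)$-algebras; concretely $\varphi$ sends $A$ to some $\widetilde B \in T(L)$ whose Frobenius module has splitting field $\widetilde M$. Taking $N$-invariants for $N = C_d$ — equivalently passing from $\widetilde B$ to $\widetilde B^d$ at the level of Frobenius modules, which is exactly the content of $\FF_p[U]^{C_d} = \FF_p[A,U^d]$ in Corollary~\ref{cor:cyclic-gen-ext} together with Theorem~\ref{thm:dir-factor} — shows that the Frobenius module $(L^n, \Phi_{\widetilde B^d})$ has splitting field $\widetilde M^{C_d} = M$. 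Finally, $\widetilde B^d = \varphi(A^d) = A(\xi)^d = A^d(\xi)$ is the desired specialization of $A^d$ at $\xi \in L^n$ (the image of $(t_1,\dots,t_n)$ under $\varphi$), completing condition (2).

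The serious work, then, is (a) checking that "taking $C_d$-invariants of the Frobenius module for $B$" corresponds precisely to "replacing $B$ by $B^d$", which uses that $T$ is abelian so that $\Phi_{B^d} = (\Phi_B)$ composed with... — more carefully, that $\mathrm{Sol}_{\Phi_{B^d}}$ is the $C_d$-fixed part of $\mathrm{Sol}_{\Phi_B}$ under the Galois action described in the Upper Bound Theorem — and (b) the cohomological lifting step. Both are manageable: (a) because for $U_0 \in T(\overline K)$ with $B = U_0(U_0^{(q)})^{-1}$ one has $B^d = U_0^d(U_0^{d\,(q)})^{-1}$ and the splitting field of $\Phi_{B^d}$ is $L(U_0^d) = L(U_0)^{C_d}$ with $C_d$ acting through $\rho$; (b) because $H^1(L,T) = 1$ handles the full torus at once once the $C_d$-extension is adjoined, and such extensions always exist over an infinite field containing $\FF_p$ by Corollary~\ref{cor:cyclic-gen-ext} applied with $2^m$ replaced by $d$ (or by Theorem~\ref{thm:cyclic-gen-ext2} applied to $C_{p^n-1}$ and then Theorem~\ref{thm:dir-factor}). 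I would present the argument by first establishing the dictionary "Frobenius module for $A^d$" $\leftrightarrow$ "$C_d$-invariants of Frobenius module for $A$" as a lemma, then deducing both conditions from Corollary~\ref{cor:cyclic-gen-ext} and Theorem~\ref{thm:dir-factor} almost formally.
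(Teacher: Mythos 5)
Your overall structure is sound — you correctly identify that Corollary~\ref{cor:cyclic-gen-ext} and the abelianness of $T$ are the load-bearing facts, and your observation that the splitting field of $\Phi_{A^d}$ is $K(U^d) = E^{C_d}$ is exactly what the paper uses for condition (1). (The paper handles (1) slightly more cleanly than your Zariski-density sketch: from $\FF_p[A,U^d] = \FF_p[U]^{C_d}$ it passes to fraction fields to get $E_1 = E^{C_d}$, then reads off $\gal(E_1/K) \cong C_{p^n-1}/C_d \cong C_{2^m}$ directly from Theorem~\ref{thm:cyclic-gen-ext2}; no need to re-invoke the Lower Bound Theorem for the power $A^d$.)

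For condition (2), however, the lifting/enlargement step you flag as ``the main technical point'' is a genuine detour that the paper avoids entirely, and it carries a real gap: to form $\widetilde M = M \otimes_L L'$ with $\gal(\widetilde M/L) \cong C_{p^n-1}$ you need a degree-$d$ cyclic extension $L'/L$ that is \emph{linearly disjoint} from $M$, and such an $L'$ need not exist for arbitrary $L \supset \FF_p$ (for instance if $L$ has no nontrivial $C_d$-extensions at all, or only ones meeting $M$ nontrivially). The paper sidesteps this completely. Since the chosen isomorphism already lands in $T(\FF_p) \subset T(M)$, the map $\rho: \gal(M/L) \to T(M)$ is \emph{already} a $1$-cocycle with values in $T(M)$ — no lift to a $C_{p^n-1}$-cocycle is required. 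One then applies $H^1(\gal(M/L), T(M)) = 1$ directly to get $U \in T(M)$ with $\rho(\sigma) = U^{-1}\sigma(U)$, sets $B = U^d((U^d)^{(p)})^{-1} \in T(L)$, and uses $\gcd(d, 2^m) = 1$ (so $\rho^d$ is still injective on $\gal(M/L)$) to conclude $M = L(U^d)$ is exactly the splitting field of $\Phi_B$. Finally $C = U(U^{(p)})^{-1} \in T(L)$ equals $A(\xi)$ for some $\xi \in L^n$ because $A$ is the universal parametrization of $T$, and then $B = C^d = A(\xi)^d = A^d(\xi)$. I recommend replacing your enlargement argument with this direct one: it is shorter, requires no auxiliary extension $L'$, and makes the role of the coprimality $\gcd(d,2^m)=1$ transparent.
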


\begin{proof}
First consider the Frobenius module $(K^n, \Phi_A)$. By the Lang-Steinberg Theorem there exists $U \in T(\overline{K})$ such that $A = U(U^{(p)})^{-1}$, so $E = K(U)$ is the splitting field of $(K^n, \Phi_A)$. Observe that
$$A^d(U^d)^{(p)} = (AU^{(p)})^d = U^d$$
since $T$ is abelian. Thus $E_1 = K(U^d)$ is the splitting field for $(K^n, \Phi_{A^d})$. In Corollary~\ref{cor:cyclic-gen-ext}, we showed that $\FF_p[A, U^d] = \FF_p[U]^{C_d}$. Taking the field of fractions of these rings it follows that $\FF_p(A, U^d) = \FF_p(U)^{C_d}$. Note that $K = \FF_p(A)$ so $\FF_p(A, U^d) = K(U^d)$ and $\FF_p(U)^{C_d} = E^{C_d}$. Thus $E_1 = E^{C_d}$. From Theorem~\ref{thm:cyclic-gen-ext2}, $\gal(E/K) \cong C_{p^n-1}$, so $\gal(K^n, \Phi_A) \cong C_{p^n-1}/C_d \cong C_{2^m}$ as desired.

Now let $M/L$ be a Galois extension, $L \supset \FF_p$, with Galois group $C_{2^m}$. Since $T(\FF_p) \cong C_{p^n-1}$, we have an injection $C_{2^m} \hookrightarrow T(\FF_p)$. Choose an isomorphism $\rho: \gal(M/L) \rightarrow T(\FF_p) \subset T(M)$. Since $\gcd(d, 2^m) = 1$, $\rho^d$ is also an isomorphism. View $\rho$ as a 1-cocyle in $H^{1}(\gal(M/L), T(M))$, then by Theorem~\ref{thm:cyclic-cohom}, $H^{1}(\gal(M/L), T(M)) = \{1\}$. Hence there exists $U \in T(M)$ such that $\rho(\sigma) = U^{-1}\sigma(U)$ for all $\sigma \in \gal(M/L)$. So $\rho^d(\sigma) = (U^d)^{-1}\sigma(U^d)$ for all $\sigma \in \gal(M/L)$ since $T$ is abelian.

Let $B = U^d((U^d)^{(p)})^{-1}$. Then $B \in T(L)$ and $B(U^d)^{(p)} = U^d$, so $L(U^d)$ is the splitting field of the Frobenius module $(L^n, \Phi_B)$. If $\sigma \in \gal(M/L(U^d))$ then $\rho^d(\sigma) = (U^d)^{-1}\sigma(U^d) = 1$ so $\gal(M/L(U^d)) = \{1\}$ since $\rho^d$ is injective. Thus $M = L(U^d)$. If we define $C = U(U^{(p)})^{-1}$, then $C \in T(L)$. So we can choose $\xi \in L^n$ such that $A(\xi) = C$. Furthermore $A^d(\xi) = A(\xi)^d = C^d = U^d((U^d)^{(p)})^{-1} = B$. Therefore $M$ is the splitting field of $A^d(\xi)$ and $(K^n, \Phi_{A^d})$ is generic for $C_{2^m}$.
\end{proof}

\begin{rem}
A nearly identical argument shows the Frobenius module $(K^n, \Phi_A)$ is $C_{p^n-1}$-generic.
\end{rem}

We can apply the construction in Theorem \ref{thm:M-S} to obtain a generic polynomial from the Frobenius module $(K^n, \Phi_{A^d})$. We demonstrate this method in Section~\ref{sect:Examples}.

As with generic extensions, using this method we obtain a generic polynomial for $C_{2^m}$ over $\FF_p$ for all $p \not\equiv \pm 1 \pmod{2^m}$.

In summary, we have the following theorem:

\begin{thm}
\label{thm:cyclic-gen-poly}
Let $m \in \NN$ and $p$ an odd prime. Let $n$ be the multiplicative order of $p$ modulo $2^m$. Then there exists a generic polynomial for $C_{2^m}$ in $n$ parameters over $\FF_p$. Moreover, the number $n$ is the minimal number of paramters for any $C_{2^m}$-generic polynomial in characteristic $p$. 
\end{thm}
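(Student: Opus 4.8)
The plan is to split Theorem~\ref{thm:cyclic-gen-poly} into two assertions: the \emph{existence} of a generic polynomial in $n = \operatorname{ord}_{2^m}(p)$ parameters, and the \emph{optimality} of that parameter count. For existence, I would argue by cases according to the residue of $p$ modulo $2^m$, since the three regimes were handled separately in the preceding material. If $p \equiv 1 \pmod{2^m}$ then $n = 1$ and the one-parameter Kummer-theoretic polynomial from the first Proposition of Section~\ref{sect:polys} does the job. If $p \equiv -1 \pmod{2^m}$ (and $p \not\equiv 1$) then $n = 2$ and the two-parameter polynomial coming from the faithful $2$-dimensional representation works. Finally, if $p \not\equiv \pm 1 \pmod{2^m}$, then by Lemma~\ref{lem:primes} we may take $n = \operatorname{ord}_{2^m}(p)$, the hypotheses of Proposition~\ref{prop:gen-module} are met, and the generic Frobenius module $(K^n, \Phi_{A^d})$ with $K = \FF_p(t_1,\dots,t_n)$ yields, via the cyclic-basis construction recalled in Theorem~\ref{thm:M-S}, a genuine $C_{2^m}$-generic polynomial in exactly $n$ parameters. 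In every case the number of parameters equals $\operatorname{ord}_{2^m}(p)$, so it remains only to check this count is consistent with Lemma~\ref{lem:primes}; in the first two cases one verifies directly that $\operatorname{ord}_{2^m}(p)$ is $1$ and $2$ respectively.

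For optimality I would invoke a lower bound on the number of parameters of a generic polynomial for a $p$-group. The key input is the essential dimension: for a finite group $G$ and a field $k$, any $G$-generic polynomial over $k$ has at least $\operatorname{ed}_k(G)$ parameters, and when $\operatorname{char} k = p$ and $G$ is a cyclic $p'$-group (here $p$ odd, $G = C_{2^m}$), one has $\operatorname{ed}_{\FF_p}(C_{2^m})$ equal to the smallest dimension of a faithful $\FF_p$-representation of $C_{2^m}$, which in turn equals the multiplicative order $n$ of $p$ modulo $2^m$ because an irreducible $\FF_p[C_{2^m}]$-module has dimension $\operatorname{ord}_{2^m}(d)$ for a generator of the corresponding cyclic quotient, and the faithful ones force a primitive $2^m$-th root, giving dimension exactly $\operatorname{ord}_{2^m}(p)$. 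Putting these together, no $C_{2^m}$-generic polynomial over $\FF_p$ can have fewer than $n$ parameters, matching the construction.

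The main obstacle I anticipate is the optimality half: making the essential-dimension lower bound rigorous in positive characteristic requires either citing the appropriate version of the ``essential dimension bounds the number of parameters'' inequality (which holds over any field, since a specialization argument turns a generic polynomial into a versal torsor) together with the computation $\operatorname{ed}_{\FF_p}(C_{2^m}) = \operatorname{ord}_{2^m}(p)$, or else giving a self-contained representation-theoretic argument. Concretely, the cleanest route is: (i) if $f$ is $C_{2^m}$-generic over $\FF_p$ in $r$ parameters, then specializing the generic extension it defines and chasing through the linear-algebra structure of cyclic Galois $p'$-extensions, one obtains a faithful action of $C_{2^m}$ on an $r$-dimensional $\FF_p$-vector space, forcing $r \geq n$; (ii) conversely the constructions above achieve $r = n$. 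The delicate point is step (i) — extracting a faithful low-dimensional representation from a low-parameter generic polynomial — and I would handle it by reducing to the statement that $\FF_p(V)^{C_{2^m}}$ fails to be rational (equivalently, the compressibility bound) unless $\dim V \geq n$; alternatively, one simply cites the no-name lemma together with the known value of $\operatorname{ed}$. I expect the verification that $\operatorname{ord}_{2^m}(p)$ is the minimal faithful $\FF_p$-representation dimension to be routine given that $\FF_{p^n}^\ast \cong C_{p^n-1} \supset C_{2^m}$ realizes it and that any faithful module must contain the $C_{2^m}$-isotypic piece demanding a primitive $2^m$-th root of unity, which lives in $\FF_{p^n}$ and no smaller field.
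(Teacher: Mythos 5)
Your proposal takes essentially the same route as the paper: existence is assembled from the preceding cases (Kummer theory for $p \equiv 1$, the two-dimensional faithful representation for $p \equiv -1$, and Proposition~\ref{prop:gen-module} together with the cyclic-basis construction for $p \not\equiv \pm 1 \pmod{2^m}$), and minimality follows from the inequality generic dimension $\geq$ essential dimension combined with the Karpenko--Merkurjev identification of $\operatorname{ed}_{\FF_p}(C_{2^m})$ with the minimal dimension of a faithful $\FF_p$-representation, which is $n$. One small caution: the ``self-contained'' alternative you sketch in step (i) (extracting a faithful $r$-dimensional representation directly from an $r$-parameter generic polynomial) is not how the bound works and would not go through as stated; the correct mechanism, which you also give, is the versality/specialization argument establishing $\operatorname{gd} \geq \operatorname{ed}$ as cited in Jensen--Ledet--Yui.
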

\begin{proof}
It remains only to show the minimality of $n$. On the one hand, it follows from a theorem of Karpenko-Merkurjev \cite{Karpenko-Merkurjev} that the essential dimension of $C_{2^m}$ over $\FF_p$ ($p \neq 2$) is equal to the least dimension of a faithful linear representation of $C_{2^m}$ over $\FF_p$, which is easily seen to be equal to $n$. On the other hand, the generic dimension (i.e., the minimal number of parameters for a generic polynomial) is an upper bound for the essential dimension \cite{Jensen-Ledet-Yui}, so it is in our case also equal to $n$. 
\end{proof}

\section{Examples}
\label{sect:Examples}
In this section, we explicitly compute generic polynomials for selected groups using the theory developed thus far.

Since the case of the full unipotent groups and the full upper triangular groups are dealt with in Kemper and Mattig's work \cite{Kemper-Mattig}. Here we consider a specific unipotent group that is a subgroup of $U^4$. 

\begin{ex}
\textit{The Quaternion Group}

Recall that the Quarternion Group $Q_8$ is a noncommutative 2-group that satisfies the following relations
$$<i,j \text{ } | \text{ } i^2 = j^2 = (ij)^2 = -1>$$
 We can realize $\bd{G} = Q_8$ as a subgroup of $U^4(\FF_2)$ via the representation
 $$\rho: \bd{G}(\FF_2) \to U^4(\FF_2)$$
 $$i \mapsto \begin{bmatrix} 1&1&0&0\\0&1&0&1\\0&0&1&1\\0&0&0&1\end{bmatrix} \text{, } j \mapsto \begin{bmatrix} 1&0&1&0\\0&1&0&0\\0&0&1&1\\0&0&0&1\end{bmatrix} \text{,and } -1 \mapsto \begin{bmatrix} 1&0&0&1\\0&1&0&0\\0&0&1&0\\0&0&0&1\end{bmatrix}$$
It is not hard to see then, that if we let $e_1, e_2, e_3$ be $\rho(i)-I, \rho(j)-I, \rho(-1)-I$, respectively, then as sets, $\bd{G}(\FF_2)$ can be written in the following form
$$ \bd{G}(\FF_2) = \{Q \in U^4 : Q = I + t_1e_1 + t_2e_2 + t_3e_3\}   \eqno{(*)}$$
where the $t_i$'s are in $\FF_2$. Thus, we can identify $\bd{G}$ with $\mathbb{A}^3$ as varieties. \\
By Proposition~\ref{prop:H90-Uni}, $H^1(\bd{G}(\FF_2)) = {1}$. \\
Using notation defined in Theorem 3.2, we have the Lang-Steinberg map:
$$\lambda: \bd{G} \rightarrow \bd{G}$$
$$ (t_1,t_2,t_3) \mapsto (t_1^2 - t_1, t_2^2-t_2, t_3^2 - t_3 -t_1^3-t_2t_1^2 - t_2^3)$$
So the induced ring homomorphism $\lambda^\ast$ has image
$$ R = \{ p(x,y,z) \in \FF_2[\bd{G}] : x =t_1^2 - t_1, y =t_2^2-t_2, z =  t_3^2 - t_3 -t_1^3-t_2t_1^2 - t_2^3 \} $$
for some $t_1, t_2, t_3  \in K \supset \FF_2$. By Theorem~\ref{thm:Gen-Ext}, $\FF_2[\G]/R$ is a $\bd{G}(\FF_2)$-generic extension. \\
\par
Now to obtain a generic polynomial, we recall from $(*)$ that the general form of an element in the unipotent group describing $Q_8$ is 
$$A(t_1,t_2,t_3) =  \begin{bmatrix}1 & t_1 & t_2 & t_3 \\ 0 & 1 & 0 &  t_1\\ 0 & 0 & 1 & t_1+t_2 \\ 0 & 0 & 0 & 1\end{bmatrix}$$ 
Then we have defined a Frobenius module $(K^n, \varphi_A)$. By Theorem ~\ref{thm:M-S}, we can obtain a generic polynomial using the fact that Frobenius modules over an infinite field (in this case $K(t_1,t_2,t_3)$) is cyclic. Here we give a different approach by methods of Gr\"{o}bner bases. 
\\
\par
We can use Mathematica to directly compute the solutions to the Frobenius system $AX^{(2)} = X$. For notational convenience, we substitute $a$ for $t_1$, $b$ for $t_2$, and $c$ for $t_3$ from now on.

And so we get the following generic polynomial 
$$f(Y) = a^8+a^5b+a^4b^2+a^3b^3+a^4b^4+ab^5+b^8+a^2bc+ab^2c+a^2c^2+abc^2+b^2c^2+c^4+(a^2b+ab^2)Y$$ $$+ (a^2+ab+a^2b+b^2+ab^2)Y^2+(1+a^2+ab+b^2)Y^4+Y^8$$

Note that we need the extra restriction on the indeterminates $a,b,c$; i.e., the condition that $ab(a+b) \neq 0$. Since this is an algebraic condition, by Corollary~\ref{cor:Gen-Ext}, it is always possible to avoid the zeros of this polynomial when specializing. 
\end{ex}

We note here that it is always possible to ``parametrize" a unipotent subgroup as we did in $(*)$, but the parametrizing polynomials may not always be linear.

\begin{ex}
\label{ex:c8-f5}
\textit{Constructing a generic polynomial for $C_8$ over $\FF_5$}

Observe that $5 \not\equiv \pm 1 \pmod{8}$ and the order of $5$ in $(\ZZ/8\ZZ)^\ast$ is 2 so we can find a generic polynomial in 2 parameters. Define $K = \FF_5(s,t)$. As in Section~\ref{sect:polys}, we proceed using Frobenius modules.

To obtain the matrix for our Frobenius module, we find the ``general'' matrix form for $T = \mathrm{Res}_{\FF_{25}/\FF_5} \mathbb{G}_m$, following the construction in Section~\ref{subsect:algtori}. Since 2 is a non-square in $\FF_5$, we can choose $\{1, \sqrt{2}\}$ as our basis for $\FF_{25}/\FF_5$. Then if $z = a+b\sqrt{2} \in \FF_{25}$,
$$m_z(1) = a+b\sqrt{2}$$
$$m_z(\sqrt{2}) = 2b+a\sqrt{2}$$
Thus,
$$
  M_z = \left[ 
    \begin{matrix}
      a & 2b \\
      b & a
    \end{matrix}
    \right]
$$

is our general matrix form. For our Frobenius module we choose the matrix $A^3$ (since $\frac{5^2-1}{8} = 3$), where $A \in T(K)$ is obtained by substituting $a \mapsto s$ and $b \mapsto t$ in $M_z$. Then

$$
  A^3 = \left[ 
    \begin{matrix}
      s & 2t \\
      t & s
    \end{matrix}
    \right]^3
    = \left[ 
    \begin{matrix}
      s^3+st^2 & s^2t+4t^3 \\
      3s^2t+2t^3 & s^3+st^2
    \end{matrix}
    \right]
$$

defines the Frobenius module $(K^2, \Phi_{A^3})$, which is $C_8$-generic by Proposition~\ref{prop:gen-module}.

To obtain a generic polynomial, we first pick a cyclic basis for $K^2$, as done in \cite{Morales-Sanchez}. We choose $v = \left[\begin{smallmatrix}1 \\ 0 \end{smallmatrix}\right]$ to be the generator for the cyclic basis. Then our basis is $\{v, A^3v^{(5)}\}$ and we form the change of basis matrix

$$N = \left[v \mid A^3v^{(5)}\right] = \left[ 
    \begin{matrix}
      1 & s^3+st^2 \\
      0 & 3s^2t+2t^3
    \end{matrix}
    \right]$$

Note that $N$ is clearly non-singular. The Frobenius module under this basis is given by the matrix

$$B = N^{-1}A^3N^{(5)} = \left[ 
    \begin{matrix}
      0 & 4s^{14}t^4 +3s^{10}t^8 + s^8t^{10} + s^6t^{12} + 2s^4t^{14} + s^2t^{16}+3t^{18} \\
      1 & s^{15} + s^{11}t^4 + 2s^9t^6 + 2s^7t^8 + 3s^5t^{10} + 2s^3t^{12} + st^{14}
    \end{matrix}
    \right]
$$

We now aim to solve the system $BX^{(p)} = X$, where $X = [x_1 \ x_2]^T$, to obtain a single polynomial which has the same splitting field as the Frobenius module. In \cite{Morales-Sanchez}, it is shown that we can equivalently solve $X^{(p)} = B^TX$. From this we obtain the generic polynomial
\begin{align*}
f(Y) = Y^{25} 	&+ (4s^{15} + 4s^{11}t^4 + 3s^9t^6 + 3s^7t^8 + 2s^5t^{10} + 3s^3t^{12} + 4st^{14})Y^5 \\
			&+ (s^{14}t^4 + 2s^{10}t^8 + 4s^8t^{10} + 4s^6t^{12} + 3s^4t^{14} + 4s^2t^{16} + 2t^{18})Y
\end{align*}

We can further factor this polynomial (using Magma) as follows:

\begin{align*}
f(Y) = 	Y&\left(Y^8 + (3s^3 + 4s^2t + 4st^2 + 2t^3)Y^4 + s^6 + s^5t + 4s^4t^2 +
4s^3t^3 + 4s^2t^4 + 3st^5 +3t^6\right) \cdot \\
		&\left(Y^8 + (3s^3 + s^2t + 4st^2 + 3t^3)Y^4 + s^6 + 4s^5t + 4s^4t^2 +
s^3t^3 + 4s^2t^4 + 2st^5 + 3t^6\right) \cdot \\
		&\left(Y^8 + (4s^3 + 2st^2)Y^4 + s^2t^4 + 3t^6\right)
\end{align*}

Each of the three degree 8 factors is irreducible and generic for $C_8$ over $\FF_5$.
\end{ex}

\bibliographystyle{amsplain}
\bibliography{GenericPolynomialPaper}

\end{document}